\theoremstyle{plain}
\newtheorem{theorem}{Theorem}[section]
\newtheorem*{theorem*}{Theorem}
\newtheorem*{remark*}{Remark}
\newtheorem{lemma}[theorem]{Lemma}
\newtheorem{corollary}[theorem]{Corollary}
\newtheorem{proposition}[theorem]{Proposition}
\newtheorem{definition}[theorem]{Definition}
\newtheorem{example}[theorem]{Example}
\newtheorem{remark}[theorem]{Remark}
\newtheorem{observation}[theorem]{Observation}
\newtheorem{question}[theorem]{Question}
\newcommand{\Isom}{\mathop{\mathrm{Isom}}\nolimits}
\newcommand{\diag}{\mathop{\mathrm{diag}}\nolimits}
\newcommand{\R}{\mathbb{R}}
\newcommand{\Z}{\mathbb{Z}}
\renewcommand{\subsection}{\@startsection{subsection}{2}{0mm}%
  {\baselineskip} 
  {0.5\baselineskip} 
  {\bfseries\itshape\large}} 
\begin{document}

\title[Proper actions with bornology and coarse geometry]{A characterization of proper actions with bornology and coarse geometry}
\author{Hiroaki Nagaya}
\subjclass[2020]{
Primary 57S30, 
Secondary 
46A08, 
46A17, 
53C23, 
51F30} 
\keywords{proper action; properly discontinuous; bornological space; Coarse space}

\address[H.~Nagaya]{%
	Graduate School of Advanced Science and Engineering, Hiroshima University, 
    1-3-1 Kagamiyama, Higashi-Hiroshima City, Hiroshima, 739-8526, Japan.
        }
\email{nagayahiroaki@hiroshima-u.ac.jp}

\begin{abstract}
    In 1961, Palais showed that every smooth proper Lie group action on a smooth manifold admits a compatible Riemannian metric on the manifold such that the action becomes isometric.
     In 2006, Yoshino studied a continuous proper action of a locally compact Hausdorff group on a locally compact Hausdorff space, and showed that the space carries a compatible uniform structure making the action equi continuous in an appropriate setting.
     In this paper, we focus on bornological proper actions on bornological spaces and prove that the space admits a compatible coarse structure such that the action becomes equi controlled. 
\end{abstract}

\newgeometry{width=5in, height=7.7in}
\maketitle

\tableofcontents

\section{Introduction and Main Results}\label{section:introduction}

Let $L$ be a locally compact Hausdorff group, $X$ a locally compact Hausdorff space and $\rho$  a continuous $L$-action on $X$.
Throughout this paper, the continuous $L$-action $\rho$ on $X$ is called \emph{proper} if
the subset 
\[
L_{C, C'} :=\{l\in L\mid lC\cap C'\neq \emptyset \}
\]
of $L$ is compact for any pair $(C,C')$ of compact subsets of $X$.
In addition, the proper $L$-action $\rho$ on $X$ is said to be \emph{properly discontinuous} if the group $L$ is discrete. 

The theories of properness and properly discontinuous actions play a fundamental role in the study of $(G,X)$-manifolds of group actions, particularly in the theory of Clifford–Klein forms (see \cite{Kobayashi-unlimit} for details).

The studies of proper actions and properly discontinuous actions behave quite differently depending on whether a metric is fixed or not.
Firstly, we focus on the case where a metric is fixed.
In this setting, let us consider the following question.
\begin{question}\label{question:metricvercharacter}
  Fix a metric $d$ on $X$ such that the $L$-action $\rho$ on $X$ is compatible with $d$ (i.e.~ isometric).
  How can the properness of $\rho$ be characterized in terms of the behavior of $L$-orbits?
\end{question}

In the setting where $(X,d)$ is a Heine–Borel metric space (i.e.~ every bounded closed subset is compact), the following well-known proposition gives an answer to Question \ref{question:metricvercharacter}.

\begin{proposition}\label{prop:isometric}
    Assume that  the space $X$ is equipped with a Heine–Borel metric $d$ and the $L$-action $\rho$ on $(X,d)$ is isometric.
    Then the following conditions on $\rho$ are equivalent.
    \begin{enumerate}
        \item\label{prop:isometric:proper} The $L$-action $\rho$ on $X$ is proper.
        \item\label{prop:isometric:closed} For any point $x\in X$, the orbit $L\cdot x$ is closed in $X$ and the isotropy $L_x\subset L$ is compact.
    \end{enumerate}
\end{proposition}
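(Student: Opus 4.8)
The plan is to prove the two implications separately, using the Heine–Borel property to translate between compactness and boundedness/closedness.

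For \eqref{prop:isometric:proper} $\Rightarrow$ \eqref{prop:isometric:closed}, I would first note that the isotropy $L_x = L_{\{x\},\{x\}}$ is compact directly from the definition of properness applied to the compact pair $(\{x\},\{x\})$. For closedness of $L\cdot x$, I would take a point $y$ in the closure of $L\cdot x$, fix a closed ball $C = \overline{B(y,1)}$, which is compact by Heine–Borel, and observe that there is a sequence $l_n \in L$ with $l_n \cdot x \to y$, so eventually $l_n \cdot x \in C$. Then $l_n \in L_{C', C}$ where $C' = \{x\} \cup C$ is a compact set containing both $x$ and all $l_n \cdot x$; since $\rho$ is proper, $L_{C',C}$ is compact, so after passing to a subsequence $l_n \to l \in L$. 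Continuity of the action then gives $l_n \cdot x \to l \cdot x$, hence $y = l \cdot x \in L\cdot x$.

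For \eqref{prop:isometric:closed} $\Rightarrow$ \eqref{prop:isometric:proper}, I would argue by contradiction: suppose $L_{C,C'}$ is not compact for some compact pair $(C,C')$. Since $X$ is Heine–Borel and the action is isometric, I would try to extract a sequence $l_n \in L_{C,C'}$ with no convergent subsequence, together with points $x_n \in C$ such that $l_n \cdot x_n \in C'$. By compactness of $C$ and $C'$ I may assume $x_n \to x \in C$ and $l_n \cdot x_n \to z \in C'$. Using the isometry property, $d(l_n \cdot x, z) \le d(l_n \cdot x, l_n \cdot x_n) + d(l_n \cdot x_n, z) = d(x, x_n) + d(l_n\cdot x_n, z) \to 0$, so $l_n \cdot x \to z$, whence $z \in \overline{L\cdot x} = L \cdot x$, say $z = l\cdot x$. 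Replacing $l_n$ by $l^{-1}l_n$ (which still fails to have a convergent subsequence, since $l$ is fixed), we get $l_n \cdot x \to x$, i.e.\ the sequence $l_n$ moves $x$ an arbitrarily small amount. The task is then to conclude that $l_n$ must subconverge modulo $L_x$, contradicting the failure of convergence; here I would use compactness of $L_x$ and a properness-type argument for the transformation group, exploiting that $L$ is locally compact so $L_x$ has a compact neighborhood.

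The main obstacle I expect is this last step: deducing from ``$l_n \cdot x \to x$'' that $\{l_n\}$ is eventually contained in a compact subset of $L$. This is essentially a local statement about the isotropy and requires more than the closed-orbit/compact-isotropy hypothesis in full generality — one typically needs that the orbit map $L/L_x \to L\cdot x$ is a homeomorphism (a slice-type or local cross-section argument), which for isometric actions on Heine–Borel spaces can be established by hand. I would handle it by choosing a compact neighborhood $K$ of the identity in $L$, noting $K L_x$ is compact, and showing that if $l_n \notin K L_x$ for infinitely many $n$ then the orbit points $l_n \cdot x$ stay a definite distance away from $x$ (using properness of the restricted action and isometry), contradicting $l_n \cdot x \to x$.
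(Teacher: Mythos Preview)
The paper does not prove Proposition~\ref{prop:isometric}; it is quoted in the introduction as a well-known fact, so there is no proof in the paper to compare against directly. What the paper's machinery does yield --- combining Theorems~\ref{theorem:topbor}, \ref{theorem:weakeqqBI} and \ref{theorem:proweakeqq} with the observation that an isometric action on a Heine--Borel space is equi controlled for the bounded coarse structure $\mathcal{E}_d$ with $\mathcal{B}_{\mathcal{E}_d}=\mathcal{B}_\mathrm{cpt}(X)$ --- is that properness is equivalent to compact isotropy together with $\iota_x^*\mathcal{B}_\mathrm{cpt}(X)=T^x_*\mathcal{B}_\mathrm{cpt}(L)$ for every $x$. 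Via Propositions~\ref{prop:TandtildeT} and \ref{prop:closedeqqborno} this unpacks to ``$L\cdot x$ closed \emph{and} $\widetilde{T}^x\colon L/L_x\to L\cdot x$ a homeomorphism'', i.e.\ Yoshino's Theorem~\ref{theorem:yoshinowc} rather than Proposition~\ref{prop:isometric} as literally stated.

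This matters because the gap you flag in \eqref{prop:isometric:closed}$\Rightarrow$\eqref{prop:isometric:proper} is genuine, and your proposed fix does not close it. The claim that $l_n\notin KL_x$ forces $d(l_n\cdot x,x)$ to stay bounded below is exactly continuity of $(\widetilde{T}^x)^{-1}$ at the basepoint, which nothing in your hypotheses supplies. In fact, without a countability assumption on $L$ the implication is false: let $L=\R$ carry the \emph{discrete} topology and act by translation on $X=\R$ with its usual metric. The action is continuous and isometric on a Heine--Borel space, the unique orbit is $X$ (closed) and every isotropy is trivial (compact), yet $L_{[0,1],[0,1]}=[-1,1]$ is infinite discrete, hence not compact. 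The usual repair is to assume $L$ $\sigma$-compact, so that a Baire-category argument on $L\cdot x=\bigcup_n K_n\cdot x$ gives some $K_n\cdot x$ nonempty interior and forces $\widetilde{T}^x$ to be open; the paper's bornological condition in Theorem~\ref{theorem:bpropercharacter} encodes this extra information automatically.

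Your argument for \eqref{prop:isometric:proper}$\Rightarrow$\eqref{prop:isometric:closed} is correct in spirit, but extracting a convergent subsequence of $(l_n)$ in $L$ presumes compact subsets of $L$ are sequentially compact. It is cleaner to note that properness makes the orbit map $l\mapsto lx$ proper, hence closed (between locally compact Hausdorff spaces), so $L\cdot x$ is closed with no sequences needed.
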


Moreover, the following is also well-known.

\begin{proposition}[cf.~ {\cite[Theorem 2.2]{Kramer2022}}, {\cite[Theorem 5.3.5]{Ratcliffe2019}}]\label{prop:isomdisconti}
    Assume that the space $X$ is equipped with a Haine-Borel metric $d$.
    Consider the natural isometric transformation group $\Isom (X)$ with compact open topology.
    Then the natural $\Isom(X)$-action on $X$ is proper.
    In particular, the action of any discrete subgroup $\Gamma$ of $\Isom (X)$ is properly discontinuous.
\end{proposition}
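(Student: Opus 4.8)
The plan is to deduce Proposition~\ref{prop:isomdisconti} from Proposition~\ref{prop:isometric} applied to the group $L = \Isom(X)$, so the work reduces to verifying that the hypotheses of Proposition~\ref{prop:isometric}\eqref{prop:isometric:closed} hold for every orbit of the natural $\Isom(X)$-action. Fix $x \in X$; I must show $\Isom(X)\cdot x$ is closed in $X$ and $\Isom(X)_x$ is compact in the compact–open topology. For closedness, suppose $g_n x \to y$ in $X$. Using the Heine–Borel property one bounds the displacement $d(z, g_n z)$ uniformly on balls around $x$ (an isometry is determined in a controlled way by where it sends $x$), extracts via a diagonal/Arzelà–Ascoli argument a subsequence converging uniformly on compacta to a map $g\colon X \to X$, checks that $g$ is again an isometry (it preserves distances; surjectivity follows by running the same argument on $g_n^{-1}$, whose displacements at $y$ are controlled), and concludes $y = gx \in \Isom(X)\cdot x$. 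For compactness of $\Isom(X)_x$, the same equicontinuity-plus-Heine–Borel package shows $\Isom(X)_x$ is a closed, pointwise-bounded, equicontinuous family, hence compact in the compact–open topology by Arzelà–Ascoli; closedness of $\Isom(X)$ inside the self-maps and of the stabilizer condition $gx = x$ are routine.

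Once these two facts are in hand, Proposition~\ref{prop:isometric} gives that the $\Isom(X)$-action on $X$ is proper. Alternatively — and this is the cleaner route if one wants to avoid re-deriving Proposition~\ref{prop:isometric}'s internals — I would argue properness directly: given compact $C, C' \subset X$, pick $x_0 \in C$ and note that any $g$ with $gC \cap C' \neq \emptyset$ satisfies $d(x_0, g x_0) \leq \operatorname{diam}(C) + \operatorname{dist}(C, C') + \operatorname{diam}(C')$, so $g x_0$ lies in a fixed bounded set $B$; since $g$ is an isometry fixing this displacement bound, $\{g \in \Isom(X) \mid gC \cap C' \neq \emptyset\}$ is an equicontinuous family whose orbit of $x_0$ lands in the compact set $\overline{B}$, and it is closed in $\Isom(X)$; Arzelà–Ascoli then forces compactness of $\Isom(X)_{C,C'}$, which is exactly properness.

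For the final sentence, let $\Gamma \leq \Isom(X)$ be a discrete subgroup. Properness of the $\Isom(X)$-action restricts to properness of the $\Gamma$-action (the set $\Gamma_{C,C'} = \Gamma \cap \Isom(X)_{C,C'}$ is a closed subset of a compact set, hence compact), and a compact subset of a discrete group is finite; thus the $\Gamma$-action is properly discontinuous. I expect the main obstacle to be the Arzelà–Ascoli extraction step: one must carefully verify that the limit of a uniformly-convergent-on-compacta sequence of surjective isometries of a Heine–Borel space is again \emph{surjective}, which uses the Heine–Borel hypothesis essentially (via control on the inverses) and is the place where a non-proper metric space would defeat the argument.
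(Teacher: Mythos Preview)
The paper does not supply its own proof of this proposition: it is stated in the introduction as a well-known fact with references to Kramer and Ratcliffe, so there is no argument in the paper to compare against.

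Your outline is essentially the standard Arzel\`a--Ascoli route and is correct in spirit, but there is one genuine lacuna. The paper's notion of ``proper'' is set up at the outset of Section~\ref{section:introduction} under the standing hypotheses that $L$ is a locally compact Hausdorff \emph{topological} group and that $\rho$ is a \emph{continuous} action. Before you can invoke Proposition~\ref{prop:isometric} with $L=\Isom(X)$, or even speak of $\Isom(X)_{C,C'}$ being compact in the sense required, you must verify that $\Isom(X)$ with the compact--open topology is a locally compact Hausdorff group and that the evaluation map $\Isom(X)\times X\to X$ is continuous. None of these are automatic for a general metric space; they hold here precisely because of the Heine--Borel hypothesis (this is part of what the cited references establish), and the same Arzel\`a--Ascoli machinery you use for the stabilizer handles local compactness of $\Isom(X)$ as well. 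You should insert this verification explicitly, since without it Proposition~\ref{prop:isometric} does not apply and the phrase ``$\Isom(X)_{C,C'}$ is compact'' has no ambient topology in which to be interpreted.

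A second, smaller point: in your direct argument you assert that $\Isom(X)_{C,C'}$ is closed in $\Isom(X)$. This is true, but it uses that the condition $gC\cap C'\neq\emptyset$ is closed in $g$, which in turn needs continuity of the action and compactness of $C$; a one-line justification would tighten the argument.
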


Thus, in the setting above, properly discontinuous actions can be attributed to the study of discrete subgroups of $\Isom (X)$.

On the other hand, when a metric on $X$ is not fixed, the situation becomes more complicated.
In fact, an action of discrete group on $X$ such that each orbit is closed and each isotropy is compact may fail to be properly discontinuous. 
The study of properly discontinuous and proper actions in this setting  was pioneered by Calabi-Markus \cite{CalabiMarkus1962}, 
Auslander \cite{Auslander1964}, 
Milnor \cite{Milnor1977} 
Kulkarni \cite{Kulkarni1981}, 
Margulis \cite{Margulis1983},
and Kobayashi \cite{Kobayashi89,Kobayashi92Fuji,Kobayashi93,Kobayashi96,Kobayashi1998deformation,Kobayashi-unlimit}. 
Especially, Kobayashi's works in the 1980s and 1990s systematically developed in non-Riemannian setting.
After their works, 
Lipsman, 
Zimmer, 
Benoist, 
Witte, 
Labourie, 
Baklouti, 
Oh, 
Guichard, 
Kassel, 
Kedim, 
Okuda, 
Elaloui, 
Smilga, 
Morita, 
Tojo, 
Kannaka, 
and more researchers expanded \cite{BakloutiElaloutiKedim2016SelbergWeilKobyashiIJM,BakloutiKhlif2007weak, Benoist96, BenoistLabourie1992,
GueritaudGuichardKasselWienhard2017compactification,
kannaka2023zariski,
Lipsman1995proper, Morita2022Cartan,Okuda13, OhWitte2000new, Shalom2000unitary, Smilga2016, Zimmer1994}.
Nowadays, this research is regarded as a key subject in differential geometry.

Motivated by this line of research, we consider the following foundational questions concerning proper actions without assuming a fixed metric on the space.

\begin{question}\label{question:metricverexistance}
Does there exist a proper action on a space that is not compatible with any metric?
\end{question}

As a negative answer to Question \ref{question:metricverexistance} in the Riemannian setting, Palais gave the following necessary condition for the action $\rho$ to be proper.

\begin{theorem}[Palais~\cite{Palais61}]\label{theorem:palais}
    Let $L$ be a Lie group, $X$ a $C^\infty$-manifold and $\rho$ a $C^\infty$-action.
    Assume that the action $\rho$ is proper.
    Then there exists an $L$-invariant Riemann metric on $X$. 
\end{theorem}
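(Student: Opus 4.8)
The plan is to \emph{average} an arbitrary Riemannian metric over the $L$-action against a cut-off function whose support is arranged — using properness — so that the averaging integral converges. I would fix any Riemannian metric $g_0$ on $X$ and a \emph{right}-invariant Haar measure $dl$ on $L$; the crux is to produce $\phi\in C^\infty(X)$, $\phi\ge 0$, such that \textup{(i)} $\{\phi>0\}$ meets every $L$-orbit, and \textup{(ii)} for every compact $C\subset X$ the set $\{l\in L:\ lC\cap\operatorname{supp}\phi\ne\emptyset\}$ is relatively compact. Granting $\phi$, define
\[
g_x(v,w):=\int_L \phi(l\cdot x)\,(g_0)_{l\cdot x}\bigl(d(\rho_l)_x v,\ d(\rho_l)_x w\bigr)\,dl .
\]
By \textup{(ii)}, for $x$ in a fixed relatively compact set the integrand is supported in a fixed compact subset of $L$, so the integral converges and, since $\rho$ is $C^\infty$, one differentiates under the integral sign to see $g$ is a smooth symmetric $2$-tensor. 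It is positive definite because $(g_0)_{l\cdot x}(d(\rho_l)_x v,d(\rho_l)_x v)>0$ for $v\ne 0$ (each $\rho_l$ is a diffeomorphism), while by \textup{(i)} and continuity of $\phi$ the weight $\phi(l\cdot x)$ is strictly positive on a nonempty open set of $l$'s; and the substitution $l\mapsto lh$ together with right-invariance of $dl$ gives $\rho_h^{*}g=g$, i.e.\ $L$-invariance.

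Constructing $\phi$ is where properness does the work, and I expect it to be the main obstacle. Properness forces $X/L$ to be Hausdorff; being the image of the (second countable, locally compact) manifold $X$ under the open quotient map $\pi\colon X\to X/L$, the orbit space is then also second countable and locally compact, hence paracompact. I would take a locally finite cover $\{V_\alpha\}$ of $X/L$ by relatively compact open sets and, by the shrinking lemma, a cover $\{V'_\alpha\}$ with $\overline{V'_\alpha}\subset V_\alpha$. For each $\alpha$, using local compactness of $X$ and openness of $\pi$, choose a relatively compact open $U_\alpha\subset\pi^{-1}(V_\alpha)$ with $\pi(U_\alpha)=V_\alpha$, and $f_\alpha\in C^\infty_c(X)$ with $0\le f_\alpha$, $\operatorname{supp}f_\alpha\subset U_\alpha$ and $\pi(\{f_\alpha>0\})\supset V'_\alpha$. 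Put $\phi:=\sum_\alpha f_\alpha$. The sum is locally finite because $\operatorname{supp}f_\alpha\subset\pi^{-1}(V_\alpha)$ and $\{V_\alpha\}$ is locally finite, so $\phi\in C^\infty(X)$; $\pi(\{\phi>0\})\supset\bigcup_\alpha V'_\alpha=X/L$ gives \textup{(i)}; and if $lC\cap\operatorname{supp}f_\alpha\ne\emptyset$ then $\pi(C)=\pi(lC)$ meets $V_\alpha$, so only finitely many $\alpha$ occur, while for each of those $\{l:\ lC\cap\operatorname{supp}f_\alpha\ne\emptyset\}$ equals $(L_{\operatorname{supp}f_\alpha,\,C})^{-1}$ and is compact by properness, which yields \textup{(ii)}.

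The tension in this construction is that $\operatorname{supp}\phi$ must be ``fat across orbits'' for \textup{(i)} yet ``thin along each orbit'' for \textup{(ii)}; properness together with paracompactness of $X/L$ is exactly what reconciles them, so most of the effort goes there. The only other delicate point is bookkeeping with the Haar measure: using a right-invariant (rather than left-invariant) measure removes a modular-function factor from the invariance computation. A more geometric alternative — close to Palais's original argument — is to first prove the slice theorem for proper actions (every orbit has an $L$-invariant tubular neighbourhood $L$-equivariantly diffeomorphic to $L\times_{L_x}S$ with $L_x$ compact), then average over the compact isotropy $L_x$ on the slice $S$, assemble an $L$-invariant metric on each tube from an $L$-invariant metric on $L/L_x$, a principal connection on $L\to L/L_x$, and the slice metric, and finally patch with an $L$-invariant partition of unity pulled back from $X/L$; there the slice theorem is the substantial input, and it too rests on properness.
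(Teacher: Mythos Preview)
The paper does not prove Theorem~\ref{theorem:palais}: it is quoted from Palais~\cite{Palais61} as background motivating the later bornological results, and no argument for it appears anywhere in the text. There is therefore nothing in the paper to compare your proof against.

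That said, your argument is the standard averaging construction and is essentially correct. A few minor remarks. You tacitly assume $X$ is second countable when deducing paracompactness of $X/L$; this is harmless under the usual conventions for smooth manifolds, but worth stating. The choice of a \emph{right}-invariant Haar measure is the correct one for the invariance check $\rho_h^*g=g$, as you note; with a left-invariant measure a modular factor would appear and spoil the argument unless $L$ is unimodular. In the verification of \textup{(ii)} your identification $\{l:lC\cap\operatorname{supp}f_\alpha\ne\emptyset\}=(L_{\operatorname{supp}f_\alpha,\,C})^{-1}$ is right, and since $\operatorname{supp}f_\alpha$ and $C$ are compact this set is compact by properness. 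The positive-definiteness step also needs that $\{l:\phi(lx)>0\}$ is nonempty \emph{and open}, which follows from continuity of $\phi$ and of the action; you say this but it is the place most likely to be glossed over. The slice-theorem alternative you sketch at the end is indeed closer in spirit to Palais's original paper, where the existence of slices is the main theorem and the invariant metric is a corollary.
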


Questions \ref{question:metricvercharacter} and \ref{question:metricverexistance} are the questions in metric geometry.
Next, let us consider similar questions for geometry of uniform spaces as below.

\begin{question}\label{question:uniformvercharacter}
     Fix a uniform structure $\mathcal{U}$ such that the $L$-action $\rho$ on $X$ is compatible with $\mathcal{U}$ (i.e.~ equi continuous). 
     How can the properness of $\rho$ be characterized in terms of the behavior of $L$-orbits?
\end{question}

\begin{question}\label{question:uniformverexistance}
    Does there exist a proper action on a space that is not compatible with any uniform structure?
\end{question}

In the appropriate settings, Yoshino gave the answer to Question \ref{question:uniformvercharacter} as below.

\begin{theorem}[Yoshino \cite{Yoshino2006}]\label{theorem:yoshinointrocharacter}
    Let $(X,\mathcal{U})$ be a locally compact separable Hausdorff uniform space.
    Assume that the $L$-action $\rho$ on $X$ is equi continuous (i.e.~ 
     for any $U\in \mathcal{U}$, there exists $V\in \mathcal{U}$ such that $V_L:=\{(lx,ly)\in X\times X\mid l\in L, (x,y)\in V\}$ is a subset of $U$). 
    Then the following conditions on $\rho$ are equivalent.
    \begin{enumerate}
        \item The $L$-action $\rho$ on $X$ is proper.
        \item For each point $x\in X$, the orbit $L\cdot x$ is closed in $X$ and the isotropy $L_x$ is compact. 
    \end{enumerate}
\end{theorem}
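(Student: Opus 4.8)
I would prove the two implications separately; $(1)\Rightarrow(2)$ is essentially formal, while $(2)\Rightarrow(1)$ carries the content. For $(1)\Rightarrow(2)$: given $x\in X$, the isotropy group is $L_x=L_{\{x\},\{x\}}$, hence compact since $\{x\}$ is compact and $\rho$ is proper. For closedness of the orbit, take a net with $l_\alpha\cdot x\to y$; using local compactness choose a compact neighborhood $C'$ of $y$, so that $l_\alpha\cdot x\in C'$ and hence $l_\alpha\in L_{\{x\},C'}$ eventually, the latter being compact; passing to a subnet $l_\alpha\to l$ and using continuity of $\rho$ gives $y=l\cdot x\in L\cdot x$.

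For $(2)\Rightarrow(1)$, fix compact $C,C'\subseteq X$; the goal is that $L_{C,C'}$ is compact. Closedness is the same kind of limiting argument: if $l_\alpha\to l$ with $l_\alpha\in L_{C,C'}$, pick $x_\alpha\in C$ with $l_\alpha\cdot x_\alpha\in C'$, pass to a subnet with $x_\alpha\to x\in C$, and conclude $l\cdot x\in C'$ by continuity and compactness. For relative compactness, suppose toward a contradiction that $(l_\alpha)\subseteq L_{C,C'}$ eventually leaves every compact subset of $L$; choose $x_\alpha\in C$ with $l_\alpha\cdot x_\alpha\in C'$ and pass to a subnet so that $x_\alpha\to x\in C$ and $l_\alpha\cdot x_\alpha\to y\in C'$. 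This is the one place equi continuity is used: from $x_\alpha\to x$ and the defining property of $V\mapsto V_L$, for each entourage $U$ we get $(l_\alpha\cdot x_\alpha,\,l_\alpha\cdot x)\in U$ eventually, so this pair tends to the diagonal; combined with $l_\alpha\cdot x_\alpha\to y$ and a composition estimate for entourages, $l_\alpha\cdot x\to y$. Hence $y\in\overline{L\cdot x}=L\cdot x$ by $(2)$, say $y=l_0\cdot x$; since left translation by $l_0^{-1}$ is a homeomorphism of $L$, the net $m_\alpha:=l_0^{-1}l_\alpha$ still leaves every compact subset of $L$, and now $m_\alpha\cdot x\to x$.

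It remains to rule out such an $(m_\alpha)$, and I expect this to be the main obstacle. The orbit map induces a continuous equivariant bijection $\bar\pi_x\colon L/L_x\to L\cdot x$, and the point is to show it is a homeomorphism; since $L\cdot x$ is closed in the locally compact space $X$ it is itself locally compact (hence Baire), and it is here that the separability hypothesis is needed, to invoke the relevant open-mapping (Effros-type) theorem for the transitive $L$-action on $L\cdot x$ and conclude $\bar\pi_x$ is open. Granting this, $m_\alpha\cdot x\to x$ forces $m_\alpha L_x\to eL_x$ in $L/L_x$; picking a compact neighborhood $\bar N$ of $eL_x$ and using that the quotient $q\colon L\to L/L_x$ is proper (as $L_x$ is compact by $(2)$), we get that $q^{-1}(\bar N)$ is a compact subset of $L$ containing $m_\alpha$ eventually — contradicting the choice of $(m_\alpha)$. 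Thus $L_{C,C'}$ is closed and relatively compact, so compact, and $\rho$ is proper. Apart from the open-mapping step and the single equi continuity transfer, everything is routine handling of nets and entourages.
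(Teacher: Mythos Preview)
The paper does not supply its own proof of this theorem: it is quoted in the introduction as a result of Yoshino and cited to \cite{Yoshino2006}. What the paper actually proves are the bornological analogs (Theorems~\ref{theorem:weakeqqBI} and~\ref{theorem:proweakeqq}), and it then remarks that Yoshino's non-separable refinement, Theorem~\ref{theorem:yoshinowc} --- in which the extra condition ``$\widetilde T^x\colon L/L_x\to L\cdot x$ is a homeomorphism'' appears explicitly --- can be recovered from the bornological machinery via Theorem~\ref{theorem:topbor} together with Propositions~\ref{prop:transitivebounded}, \ref{prop:TandtildeT} and~\ref{prop:closedeqqborno}. So there is no proof in the paper of the specific statement you were asked about; your direct uniform-space argument is in effect a reconstruction of what Yoshino's original proof must contain, and it is methodologically quite different from the paper's route through bornologies and coarse structures.

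Your outline is sound. The implication $(1)\Rightarrow(2)$ and the closedness of $L_{C,C'}$ are routine, and the equi-continuity transfer from $l_\alpha x_\alpha\to y$ to $l_\alpha x\to y$ is correct and is indeed the single place the uniform hypothesis is used. The step you correctly flag as the main obstacle --- that $\bar\pi_x\colon L/L_x\to L\cdot x$ is open --- is precisely what separates Theorem~\ref{theorem:yoshinointrocharacter} from Theorem~\ref{theorem:yoshinowc}, where that homeomorphism must be assumed rather than deduced. One caution on that step: the classical Baire-category open-mapping theorems for transitive actions of locally compact groups require $\sigma$-compactness (or second countability) of the acting group $L$, not merely separability of the target space or of the orbit. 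You should verify that the hypotheses actually feed the Effros-type theorem you have in mind, or that a countability assumption on $L$ is implicit in Yoshino's setting; as written, separability of $X$ alone does not obviously suffice.
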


Moreover, Yoshino \cite{Yoshino2006} also gave
a negative answer to Question \ref{question:uniformverexistance} in the appropriate setting as below.

\begin{theorem}[Yoshino \cite{Yoshino2006}]\label{theorem:yoshinointro}
    Let $L$ be a locally compact Hausdorff group, $X$  a paracompact locally compact separable Hausdorff space, and $\rho$ a continuous $L$-action on $X$ such that the quotient space $L\backslash X$ is also paracompact. 
    Assume that the action $\rho$ is proper.
    Then there exists a uniform structure $\mathcal{U}$ on $X$ such that the topology induced by the uniform structure $\mathcal{U}$ coincides with the original topology on $X$ and the action $\rho$ on $(X,\mathcal{U})$ is equi continuous.
\end{theorem}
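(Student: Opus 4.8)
The plan is to construct a uniform structure $\mathcal{U}$ on $X$ that induces the given topology and that admits a fundamental system of entourages invariant under the diagonal $L$-action; for such a $\mathcal{U}$, equicontinuity is immediate, since for $U\in\mathcal{U}$ one may take $V\subseteq U$ diagonally $L$-invariant and then $V_L\subseteq V\subseteq U$. So the whole task reduces to manufacturing compatible ``$L$-invariant'' uniformities locally and patching them. First I would record what properness buys: the map $\theta\colon L\times X\to X\times X$, $\theta(l,x)=(lx,x)$, satisfies $\theta^{-1}(C'\times C)\subseteq L_{C,C'}\times C$, so $\theta$ is proper, hence closed with compact fibres; consequently the orbit equivalence relation is closed in $X\times X$, and since $\pi\colon X\to B:=L\backslash X$ is open, $B$ is a locally compact Hausdorff space, paracompact by hypothesis; also every isotropy $L_x=L_{\{x\},\{x\}}$ is compact. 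I would then invoke a slice theorem for proper actions (Palais \cite{Palais61} in the Lie case; a version valid for locally compact groups, which is where local compactness and separability of $X$ enter): each point of $B$ has a neighbourhood $\pi(W_b)$ with $W_b$ an $L$-invariant open subset of $X$ which is $L$-equivariantly homeomorphic to a twisted product $L\times_{K_b}S_b$, where $K_b=L_{x_b}$ is compact and $S_b$ is a locally compact Hausdorff $K_b$-space.

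On each tube $W_b\cong L\times_{K_b}S_b$ I would build a compatible $L$-invariant uniformity $\mathcal{U}_b$ by averaging over the \emph{compact} group $K_b$. Start from a family of left-invariant continuous pseudometrics generating the left uniformity of $L$ and replace each $\delta$ by $(a,b)\mapsto\max_{k\in K_b}\delta(ak,bk)$; the result is left-$L$-invariant, right-$K_b$-invariant, and still generates the topology of $L$. Similarly average a compatible family of pseudometrics on $S_b$ over $K_b$ to get $K_b$-invariant ones. For matched pairs put $D=\max(\widetilde{\delta}_L,m_{S_b})$ on $L\times S_b$; this is left-$L$-invariant and invariant under the twisted $K_b$-action $(a,s)\mapsto(ak^{-1},ks)$, so the orbit distance $\overline{D}([p],[q]):=\min_{k\in K_b}D(p,k\cdot q)$ is a well-defined continuous pseudometric on $W_b=L\times_{K_b}S_b$. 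Since the $L$- and twisted-$K_b$-actions on $L\times S_b$ commute and $K_b$ is compact and acts by $D$-isometries, the family of all such $\overline{D}$ is $L$-invariant and generates the quotient topology of $W_b$; hence $\mathcal{U}_b$ is compatible with diagonally $L$-invariant basic entourages.

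To globalize, use paracompactness of $B$ to choose a partition of unity $\{\bar\psi_j\}$ on $B$ subordinate to a refinement of $\{\pi(W_b)\}$, and pull it back along $\pi$ to an $L$-invariant, locally finite partition of unity $\{\psi_j\}$ on $X$ with $\mathrm{supp}\,\psi_j\subseteq W_{b(j)}$. For each index $j$, each symmetric $L$-invariant $E\in\mathcal{U}_{b(j)}$, and each $\epsilon>0$, define
\[
\begin{aligned}
U(j,E,\epsilon):={}&\{(x,y)\in X\times X: |\psi_j(x)-\psi_j(y)|<\epsilon\}\\
&{}\cap\{(x,y): \psi_j(x)>\epsilon\ \text{and}\ \psi_j(y)>\epsilon\ \Rightarrow\ (x,y)\in E\},
\end{aligned}
\]
which makes sense because $\psi_j(x)>0$ forces $x\in W_{b(j)}$. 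Let $\mathcal{U}$ be the uniformity generated by all the $U(j,E,\epsilon)$. I would then verify: each $U(j,E,\epsilon)$ is a symmetric neighbourhood of the diagonal; picking $E'$ with $E'\circ E'\subseteq E$, one has $U(j,E',\epsilon/3)\circ U(j,E',\epsilon/3)\subseteq U(j,E,\epsilon)$ (if $\psi_j(x),\psi_j(z)>\epsilon$ then the intermediate value satisfies $\psi_j(y)>\epsilon/3$, so both factors lie in $E'$), so $\mathcal{U}$ is a genuine uniformity; it is Hausdorff because distinct points are separated by some $\psi_j$, via the first factor or, if that $\psi_j$ agrees on them but is positive there, via a point-separating $E\in\mathcal{U}_{b(j)}$; it induces the original topology, since each $U(j,E,\epsilon)[x]$, and hence each $U[x]$ for $U\in\mathcal{U}$, is a neighbourhood of $x$, while conversely, given a neighbourhood $N\ni x$, choosing $j$ with $\psi_j(x)>0$, $\epsilon=\psi_j(x)/2$, and $E\in\mathcal{U}_{b(j)}$ with $E[x]\subseteq N\cap W_{b(j)}$ gives $U(j,E,\epsilon)[x]\subseteq N$; and each $U(j,E,\epsilon)$ is diagonally $L$-invariant since $\psi_j$ is $L$-invariant and $E$ was chosen $L$-invariant. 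By the opening reduction this $\mathcal{U}$ proves the theorem.

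The step I expect to be the real obstacle is the slice theorem for a merely locally compact (not Lie) group $L$, together with the attendant care in checking that the averaged pseudometrics descend to the tubes $W_b$ and there generate the quotient topology; the patching in the third paragraph, though it needs the $\epsilon/3$ bookkeeping, is essentially formal once the local invariant uniformities are in hand. A minor but necessary point is to carry out the averaging over $K_b$ with families of pseudometrics rather than a single metric, since the slice $S_b$ and the group $L$ need not be metrizable.
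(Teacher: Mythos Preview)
The paper does not contain a proof of this theorem. Theorem~\ref{theorem:yoshinointro} (and its sharpened form, Theorem~\ref{theorem:yoshinopw}) is stated in the paper purely as background, attributed to Yoshino \cite{Yoshino2006}, with no argument given; the paper's own contributions are the bornological/coarse analogues in Section~\ref{section:mainthm}. So there is nothing in the paper to compare your proposal against.

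That said, a few comments on your argument. The overall architecture---local $L$-invariant uniformities on tubes, glued via an $L$-invariant partition of unity pulled back from the paracompact quotient---is a standard and sound way to produce a compatible equicontinuous uniformity, and your bookkeeping for the gluing (the $\epsilon/3$ trick to force the midpoint into the tube) is correct. The genuine soft spot is exactly the one you flag: the slice theorem. Palais's theorem is for Lie groups; for a general locally compact $L$ acting properly on a locally compact space you need a result such as Abels' slice theorem, and that carries its own hypotheses (and is where separability/second countability typically enters). If you intend this as a self-contained proof rather than a sketch, you should cite a precise slice theorem covering your hypotheses and verify they are met. A secondary point: when averaging pseudometrics over the compact isotropy $K_b$, you should say why the resulting family still \emph{generates} the topology (continuity of the action and compactness of $K_b$ give this, but it deserves a line), and why the quotient pseudometrics $\overline{D}$ generate the quotient topology on $L\times_{K_b}S_b$ (openness of the quotient map plus compactness of $K_b$).
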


The purpose of this paper is to study proper actions from the perspective of bornology and coarse geometry. 
Bornology was developed by Nlend \cite{HogbeHenri77} in the context of functional analysis.
We can see historical background of Bornology in \cite{Henri1972}.
On the other hand, large-scale geometry is greatly influenced by Gromov's researches \cite{Gromov1987, Gromov1993} on geometric group theory.
Following that development, coarse geometry was introduced by Roe \cite{Roe1993} to study the index theory of complete Riemannian manifolds.
Later, coarse spaces are defined as a generalization of metric spaces by Roe \cite{Roe2003LectureCoarse}.
Although bornology and coarse geometry originated from different fields, Bunke and Engel \cite{BunkeEngel2020} formulated the concept of  ``bornological coarse spaces'' and developed homotopy theory for them.

The studies of proper actions from the perspective of coarse geometry can be found in \cite{Brodskiy2008,LeitnerVigolo2023}.
It should be noted that Leitner and Vigolo studied the notion of ``coarse $\mathcal{B}$-proper actions'' in \cite{LeitnerVigolo2023} (for the details, see Remark \ref{remark:coarseproper}). 
Based on it, in this paper, we consider bornological proper actions.

In the rest of this section, let $(X,\mathcal{B}_X)$ be a bornological space (Definition \ref{def:bornology}), $(L,\mathcal{B}_L)$ a bornological group (Definition \ref{def:bgroup}) and $\rho$ a bornological  $L$-action (Definition \ref{def:baction}).

\begin{definition}
     In this paper, the $L$-action $\rho$ on $X$ is said to be \emph{bornological proper}  if the set $L_{B, B'}:=\{l\in L\mid lB\cap B'\neq \emptyset \}$ belongs to $\mathcal{B}_L$ for each pair $(B,B')$ of elements of $\mathcal{B}_X$.
\end{definition}

In this paper, we focus on the following questions of bornological proper actions, which are similar to Questions \ref{question:metricvercharacter}, \ref{question:metricverexistance}, \ref{question:uniformvercharacter} and \ref{question:uniformverexistance}.

\begin{question}\label{question:coarsevercharacter}
    Fix a coarse structure  $\mathcal{E}$ on $X$ such that the $L$-action $\rho$ on $X$ is compatible with $\mathcal{E}$ (i.e.~ equi controlled; see Definition \ref{def:equicontrolled}). 
    How can the properness of $\rho$ be characterized in terms of the behavior of $L$-orbits?
\end{question}

\begin{question}\label{question:coarseverexistance}
    Does there exist a proper action on a space that is not compatible with any coarse structure?
\end{question}

For each point $x\in X$, let us write $\iota_x$ for the inclusion map from the orbit $L\cdot x$ to $X$ and $T^x$ for the map $L\rightarrow L\cdot x, ~l\mapsto lx$.

As one of the main results of this paper, we give an answer to Question \ref{question:coarsevercharacter} as below.

\begin{theorem}\label{theorem:bpropercharacter}
    Let $\mathcal{E}$ be a coarse structure on $X$.
    Assume that the equation $\mathcal{B}_X=\mathcal{B}_\mathcal{E}$ holds and the $L$-action $\rho$ on $(X,\mathcal{E})$ is equi controlled. 
    Then the following conditions are equivalent.
    \begin{enumerate}
        \item The action $\rho$ is bornological proper.
        \item For any point $x\in X$, the isotropy $L_x$ belongs to $\mathcal{B}_\mathcal{E}$ and the equation $\iota_x^*(\mathcal{B}_\mathcal{E}) = T^x_*(\mathcal{B}_L)$ holds. 
    \end{enumerate}
\end{theorem}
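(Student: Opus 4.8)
The plan is to establish the two implications separately. Throughout I will use the following standard facts and hypotheses: the bornology of a coarse structure is $\mathcal{B}_\mathcal{E}=\{B\subseteq X\mid B\times B\in\mathcal{E}\}$; $\iota_x^*(\mathcal{B}_\mathcal{E})$ is the family of subsets of $L\cdot x$ that are bounded in $X$, while $T^x_*(\mathcal{B}_L)$ is the family of subsets of $L\cdot x$ contained in some $T^x(A)=Ax$ with $A\in\mathcal{B}_L$; a coarse structure is closed under inverses and composition; the action is equi controlled, i.e.\ $E_L:=\{(lx,ly)\mid l\in L,\ (x,y)\in E\}\in\mathcal{E}$ for all $E\in\mathcal{E}$; $\mathcal{B}_X=\mathcal{B}_\mathcal{E}$; and $\rho\colon L\times X\to X$ and the multiplication of $L$ are bornological maps (send bounded sets to bounded sets). (Here the isotropy condition in (ii) is read as $L_x\in\mathcal{B}_L$.)

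For $(i)\Rightarrow(ii)$: one has $L_x=L_{\{x\},\{x\}}\in\mathcal{B}_L$ directly from bornological properness applied to the bounded set $\{x\}$. For the equation, the inclusion $T^x_*(\mathcal{B}_L)\subseteq\iota_x^*(\mathcal{B}_\mathcal{E})$ uses only that $\rho$ is bornological, since $Ax=\rho(A\times\{x\})\in\mathcal{B}_X=\mathcal{B}_\mathcal{E}$ whenever $A\in\mathcal{B}_L$. For the reverse inclusion, given a subset $S$ of the orbit with $S\in\mathcal{B}_\mathcal{E}=\mathcal{B}_X$, the set $A:=(T^x)^{-1}(S)$ equals $L_{\{x\},S}$, which is bounded by properness, and $T^x(A)=S$ because $S\subseteq L\cdot x$; hence $S\in T^x_*(\mathcal{B}_L)$.

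For $(ii)\Rightarrow(i)$: fix $B,B'\in\mathcal{B}_X=\mathcal{B}_\mathcal{E}$; we may assume $B\neq\emptyset$ and choose $x\in B$. Put $E:=B\times B\in\mathcal{E}$, so $E_L\in\mathcal{E}$ by equi control. If $l\in L_{B,B'}$, pick $b\in B$ with $lb\in B'$; then $(x,b)\in E$, hence $(lx,lb)\in E_L$, hence $lx$ lies in $C:=\{y\in X\mid (y,z)\in E_L\text{ for some }z\in B'\}$. The inclusion $C\times C\subseteq E_L\circ(B'\times B')\circ E_L^{-1}\in\mathcal{E}$ shows $C\in\mathcal{B}_\mathcal{E}=\mathcal{B}_X$. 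Thus $L_{B,B'}\subseteq(T^x)^{-1}(C)$, and $C\cap(L\cdot x)$ is a bounded subset of the orbit, so $C\cap(L\cdot x)\in\iota_x^*(\mathcal{B}_\mathcal{E})=T^x_*(\mathcal{B}_L)$; therefore $C\cap(L\cdot x)\subseteq Ax$ for some $A\in\mathcal{B}_L$. Since $(T^x)^{-1}(Ax)=AL_x$, we obtain $L_{B,B'}\subseteq AL_x$, and $AL_x\in\mathcal{B}_L$ because $L_x\in\mathcal{B}_L$ and multiplication is bornological; hence $L_{B,B'}\in\mathcal{B}_L$.

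The implication $(ii)\Rightarrow(i)$ is where the real work lies, and its crux is producing the single bounded set $C$ that works simultaneously for every $l\in L_{B,B'}$: equi controlledness is precisely what allows one to replace the varying point $b$ at which $l$ is tested by the fixed basepoint $x$ (the coarse counterpart of Yoshino's use of equi continuity), after which the equation in (ii) transports the resulting boundedness from the orbit back into $L$, and boundedness of $L_x$ together with the bornological group axiom closes the argument. The hypothesis $\mathcal{B}_X=\mathcal{B}_\mathcal{E}$ is used repeatedly to pass between ``bounded in $X$'' and ``bounded for $\mathcal{E}$'' — both when forming $B\times B$ and $B'\times B'$ as elements of $\mathcal{E}$ and when recognising $C$ and $C\cap(L\cdot x)$ as elements of $\mathcal{B}_X$. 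No serious obstruction is expected in $(i)\Rightarrow(ii)$; it is a direct unwinding of the definitions.
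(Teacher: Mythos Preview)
Your overall strategy matches the paper's: the paper derives Theorem~\ref{theorem:bpropercharacter} by combining Theorem~\ref{theorem:weakeqqBI} (weakly B-proper $\Leftrightarrow$ property~(BI) plus the orbit-bornology equation) with the implication \eqref{theorem:proweakeqq:item:unibd}$\Rightarrow$\eqref{theorem:proweakeqq:item:pro} of Theorem~\ref{theorem:proweakeqq}, and your direct argument is essentially these two pieces spliced together. The part $(i)\Rightarrow(ii)$ is fine and is exactly the content of Theorem~\ref{theorem:weakeqqBI} in one direction.

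There is, however, a genuine gap in $(ii)\Rightarrow(i)$. You take as a ``standard fact'' that $\mathcal{B}_\mathcal{E}=\{B\subseteq X\mid B\times B\in\mathcal{E}\}$, and then use $B\times B\in\mathcal{E}$ and $B'\times B'\in\mathcal{E}$. But this is \emph{not} the paper's definition: Proposition~\ref{prop:coarseinduced} defines $\mathcal{B}_\mathcal{E}$ as those $B$ contained in $E[A]$ for some controlled $E$ and some \emph{finite} $A$, and Proposition~\ref{prop:connected} (together with the remark following it) says the two descriptions agree only when $(X,\mathcal{E})$ is coarsely connected. Theorem~\ref{theorem:bpropercharacter} carries no such hypothesis, so from $B\in\mathcal{B}_\mathcal{E}$ you cannot conclude $B\times B\in\mathcal{E}$, and likewise your inclusion $C\times C\subseteq E_L\circ(B'\times B')\circ E_L^{-1}$ does not land in $\mathcal{E}$.

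The repair is exactly what the paper does in the proof of \eqref{theorem:proweakeqq:item:unibd}$\Rightarrow$\eqref{theorem:proweakeqq:item:pro} in Theorem~\ref{theorem:proweakeqq}: from $B\in\mathcal{B}_\mathcal{E}$ take finitely many basepoints $x_1,\dots,x_n$ and $E\in\mathcal{E}$ with $B\subset\bigcup_i E[x_i]$; use equi controlledness to find $F\in\mathcal{E}$ with $E_l\subset F$ for all $l$, so that $l\cdot E[x_i]\subset F[lx_i]$; and conclude $L_{B,B'}\subset\bigcup_i L_{x_i,\,F^T[B']}$. The set $F^T[B']$ is bounded by Proposition~\ref{prop:coarsebornologyproperty} (no need for $B'\times B'\in\mathcal{E}$), and each $L_{x_i,\,F^T[B']}$ is bounded by the orbit-bornology equation plus $L_{x_i}\in\mathcal{B}_L$, exactly as in your last paragraph. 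In short, replace your single basepoint $x\in B$ by the finite family supplied by the paper's definition of $\mathcal{B}_\mathcal{E}$, and replace the $C\times C$ computation by an appeal to Proposition~\ref{prop:coarsebornologyproperty}.
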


Here, the family $\mathcal{B}_\mathcal{E}$ is the induced bornology from the coarse structure $\mathcal{E}$ (see Proposition \ref{prop:coarseinduced}).
For each point $x\in X$, the bornology $\iota_x^*(\mathcal{B}_X)$ (resp.~$T^x_*(\mathcal{B}_L)$) on $L\cdot x$ 
is the inverse image bornology of $\mathcal{B}_X$ by the map $\iota_x$ (resp. the image bornology of $\mathcal{B}_L$ by the map $T_x$).
These definitions can be found in Definition \ref{definition:induction}.

As our second main result, we provide a completely negative answer to Question \ref{question:coarseverexistance} as follows.

\begin{theorem}\label{theorem:bproperexistance}
    Assume that the $L$-action $\rho$ on $X$ is bornological proper.
    Then  there exists a coarse structure $\mathcal{E}$ on $X$ such that the equation $\mathcal{B}_X=\mathcal{B}_\mathcal{E}$ holds and the action $\rho$ on $(X, \mathcal{E})$ is equi controlled.
\end{theorem}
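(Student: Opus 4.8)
The plan is to build the coarse structure $\mathcal{E}$ directly from the group action, mimicking the classical construction of a proper metric on a space with a proper cocompact action, but working entirely with the bornologies. The natural candidate is to take $\mathcal{E}$ to be the coarse structure generated by the entourages of the form
\[
E_B := \{(y,y')\in X\times X \mid \exists\, l\in L,\ \exists\, (b,b')\in B\times B,\ y=lb,\ y'=lb'\},
\]
where $B$ ranges over $\mathcal{B}_X$ (and we throw in the diagonal to guarantee the coarse structure is unital). Intuitively, two points are ``$E_B$-close'' if a single group element carries them both into the same bounded set $B$; this is precisely the coarse analogue of saying ``$x,y$ lie in a common translate of a fixed compact neighborhood,'' which is how one builds proper metrics from proper actions. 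First I would check that the collection $\{E_B : B\in\mathcal{B}_X\}\cup\{\Delta_X\}$ satisfies the axioms for generating a coarse structure: closure under subsets and finite unions is immediate from the corresponding properties of $\mathcal{B}_X$; symmetry of each $E_B$ is clear; and for the composition axiom one has $E_B\circ E_{B'}\subseteq E_{B''}$ whenever $B''\in\mathcal{B}_X$ contains a set large enough to absorb $B$ and $B'$ after a suitable translation — here is where bornological properness enters, since if $lb' = l'\tilde b$ for $b'\in B$, $\tilde b\in B'$, then $l^{-1}l'\in L_{B',B}$, which is bounded, so $l'B'\subseteq l\cdot(L_{B',B}B')$ and $L_{B',B}B'\in\mathcal{B}_X$ because the action is bornological.

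Once $\mathcal{E}$ is defined, there are two things to verify. The first is the bornology-matching equation $\mathcal{B}_X = \mathcal{B}_\mathcal{E}$. The inclusion $\mathcal{B}_X\subseteq\mathcal{B}_\mathcal{E}$ is easy: for $B\in\mathcal{B}_X$, picking any basepoint $b_0\in B$ (assuming $B\neq\emptyset$; the empty set is handled trivially) one sees $B\times\{b_0\}\subseteq E_B$, so $B$ is $\mathcal{E}$-bounded. For the reverse inclusion I would show that any $\mathcal{E}$-bounded set — i.e.\ any set $D$ with $D\times D\subseteq E$ for some $E$ in the generated coarse structure — is already in $\mathcal{B}_X$; unwinding the definition of the generated structure (finite unions of translates of the $E_B$'s, closed under composition), this reduces to the claim that a set $D$ with $D\times\{d_0\}\subseteq E_B$ for the relevant $B$ must satisfy $D\subseteq l\cdot(L_{B,B}\cdot B)$ for an appropriate $l$, hence $D\in\mathcal{B}_X$ by bornological properness again. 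The second thing to verify is that $\rho$ is equi controlled for $\mathcal{E}$, i.e.\ for each controlled set $E$ the union $\bigcup_{l\in L} (l\times l)E$ is still controlled. But this is essentially built into the definition: each generating entourage $E_B$ is \emph{already} $L$-invariant in the sense that $(l\times l)E_B = E_B$ for every $l\in L$, since the defining condition quantifies existentially over all of $L$. This $L$-invariance propagates through finite unions and compositions, so every element of $\mathcal{E}$ is $L$-invariant, which is the strongest possible form of equi controlledness.

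The main obstacle I anticipate is the composition axiom in the verification that $\{E_B\}$ generates a coarse structure — specifically, showing that $E_B\circ E_{B'}$ is contained in $E_{B''}$ for some $B''\in\mathcal{B}_X$. The difficulty is that a pair $(y,y'')\in E_B\circ E_{B'}$ comes with a witness $y'$ and \emph{two different} group elements $l, l'$ (one conjugating $(y,y')$ into $B$, the other conjugating $(y',y'')$ into $B'$), and one must produce a single group element conjugating $(y,y'')$ into a common bounded set. The key computation is that $l^{-1}l'$ lies in $L_{B',B}\in\mathcal{B}_L$, so that $y, y''$ both lie in $l\cdot\bigl(B\cup (L_{B',B}\cdot B')\bigr)$, and then one takes $B'' \supseteq B\cup(L_{B',B}\cdot B')$, which is in $\mathcal{B}_X$ precisely because the action is bornological (Definition \ref{def:baction} ensures $L_{B',B}\cdot B'\in\mathcal{B}_X$) and $\mathcal{B}_X$ is closed under finite unions. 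A secondary subtlety worth stating carefully is the treatment of the empty set and of whether one wants the coarse structure to be connected/unital — adjoining $\Delta_X$ to the generators resolves this cleanly — and, if the paper's definition of coarse structure requires entourages to contain the diagonal individually, one should replace $E_B$ by $E_B\cup\Delta_X$ throughout; none of this affects the argument.
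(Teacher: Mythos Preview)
Your proposal is correct and matches the paper's approach essentially line for line: the paper defines exactly your entourage $E_B\cup\Delta_X$ (written there as $E(L,B)$), proves the composition axiom via the same computation $l^{-1}l'\in L_{B',B}$ (Lemma~\ref{lem:elb}), uses the built-in $L$-invariance of $E(L,B)$ for equi controlledness, and verifies $\mathcal{B}_{\mathcal{E}}=\mathcal{B}_X$ via the formula $E(L,B)[x]=((L_{x,B})^{-1}\cdot B)\cup\{x\}$ (Lemma~\ref{lem:elbnbd}), which is a minor rephrasing of your reverse-inclusion argument. The paper packages all of this as the chain (i)$\Rightarrow$(ii)$\Rightarrow$(iii) of Theorem~\ref{theorem:proweakeqq}.
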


This paper is organized as follows.
In Section \ref{section:topproper}, we recall previous researches of proper actions.
In Section \ref{section:bornologycoarse}, let us give the definitions of bornological spaces and coarse spaces.
In Section \ref{section:bproper}, an overview of proper actions is derived in terms of topology and bornology.
Finally in Section \ref{section:mainthm}, we propose the definition of bornological proper actions and prove Theorems \ref{theorem:bpropercharacter} and \ref{theorem:bproperexistance}. 

\section{Preliminaries on topological proper actions}\label{section:topproper}

\subsection{Topological proper actions}
In this section, we recall some terminologies for topological proper actions according to \cite{BakloutiKhlif2007weak, Kobayashi92Fuji, Palais61}.
Throughout Section \ref{section:topproper}, let $L$ be a locally compact Hausdorff group, $X$ a locally compact Hausdorff space and $\rho$ a continuous $L$-action on $X$.

\begin{definition}\label{def:topproper}
    We fix our terminologies as below.
    \begin{enumerate}
        \item The action $\rho$ is said to be \emph{proper}  if the subset $L_{C, C'}:=\{l\in L\mid lC\cap C'\neq \emptyset \}$ of $L$ is compact for each pair $(C,C')$ of compact subsets of $X$.
        \item The action $\rho$ is said to be \emph{weakly proper} if  the subset $L_{x, C}:=\{l\in L\mid lx\in C\}$ of $L$ is compact for each point $x\in X$ and a compact subset $C$ of $X$. 
        \item We say that the action $\rho$ has the \emph{property (CI)} if for each point $x \in X$, the isotropy subgroup $L_x:=\{l\in L\mid lx=x\}$ of $L$ is compact.
    \end{enumerate}
    \end{definition}

    \begin{remark}
        Palais \cite{Palais61} originally defined proper actions in the more general setting of a completely regular space $X$.
        The concept of the property (CI) is introduced by Kobayashi \cite{Kobayashi92Fuji}.
    \end{remark}

One can easily observe that any proper action is weakly proper and any weakly proper action has the property (CI).
The converse claims do not hold in general.

Throughout this paper, a \emph{proper map} refers to a map whose inverse image of any compact set is also compact.
Definition \ref{def:topproper} can be rephrased in terms of proper maps as below.

\begin{proposition}\label{prop:propernessandmap}
    The following holds.
    \begin{enumerate}
        \item The action $\rho$ is proper  if and only if  the map 
        \begin{align*}
        T\colon L\times X\rightarrow X\times X, ~(l,x)\mapsto (x,lx)
        \end{align*}
        is a proper map.
        \item The action $\rho$ is weakly proper if and only if  the map 
        \begin{align*}
        L\rightarrow X, ~l\mapsto lx
        \end{align*}
        is a proper map for each $x\in X$. 
        \item The action $\rho$ has the property (CI) if and only if the quotient $\pi^x \colon L\rightarrow L/L_x$ is a proper map for each $x \in X$.
    \end{enumerate}
\end{proposition}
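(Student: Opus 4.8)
The plan is to rewrite each of the three conditions as a statement about preimages of compact sets and then to transfer compactness through the maps using continuity, the Hausdorff hypothesis, and (only for the last part) local compactness of $L$. Write $a_x\colon L\to X$, $l\mapsto lx$ for the orbit map through $x$. Then (ii) is immediate: for every subset $C\subseteq X$ one has $a_x^{-1}(C)=\{l\in L\mid lx\in C\}=L_{x,C}$, so $a_x$ is a proper map exactly when $L_{x,C}$ is compact for every compact $C\subseteq X$, which is weak properness at $x$; letting $x$ vary gives the equivalence. As a by-product, $L_x=a_x^{-1}(\{x\})$ is closed in $L$ because $\{x\}$ is closed in the Hausdorff space $X$, a fact I will use below.

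For (i), note first that $T$ is continuous because $\rho$ is. For the forward implication, let $K\subseteq X\times X$ be compact and set $C:=\mathrm{pr}_1(K)$, $C':=\mathrm{pr}_2(K)$, which are compact with $K\subseteq C\times C'$. If $(l,x)\in T^{-1}(C\times C')$ then $x\in C$ and $lx\in lC\cap C'$, so $l\in L_{C,C'}$; hence $T^{-1}(K)\subseteq T^{-1}(C\times C')\subseteq L_{C,C'}\times C$, which is compact by properness of $\rho$. Since $T^{-1}(K)$ is closed (the preimage under a continuous map of a set which is closed, being compact in the Hausdorff space $X\times X$), it is a closed subset of a compact set and hence compact, so $T$ is a proper map. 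For the converse, fix compact $C,C'\subseteq X$; the computation just made shows that $L_{C,C'}$ is precisely the image of the compact set $T^{-1}(C\times C')$ under the continuous projection $L\times X\to L$, hence compact.

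For (iii), recall that since $L_x$ is a closed subgroup of the topological group $L$, the quotient $L/L_x$ is Hausdorff and locally compact, and $\pi^x$ is a continuous open surjection. The ``if'' direction is trivial: $L_x=(\pi^x)^{-1}(\{eL_x\})$ is the preimage of a point, so it is compact as soon as $\pi^x$ is proper. For the ``only if'' direction, assume $L_x$ is compact and let $K\subseteq L/L_x$ be compact. Using local compactness of $L$ and openness of $\pi^x$, cover $K$ by finitely many sets $\pi^x(U_1),\dots,\pi^x(U_n)$ with each $U_i\subseteq L$ open and relatively compact; since $(\pi^x)^{-1}(\pi^x(U_i))=U_iL_x$, we get $(\pi^x)^{-1}(K)\subseteq\bigcup_{i=1}^n U_iL_x\subseteq\bigcup_{i=1}^n\overline{U_i}\,L_x$, and each $\overline{U_i}\,L_x$ is compact as the image of the compact set $\overline{U_i}\times L_x$ under multiplication. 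Thus $(\pi^x)^{-1}(K)$ is a closed subset of a compact set, hence compact, and $\pi^x$ is proper.

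I expect the ``only if'' part of (iii) to be the only genuine obstacle: it is the sole place where one must actually invoke that $L$ is locally compact Hausdorff, that the coset space $L/L_x$ is again locally compact, and that $\pi^x$ is open, in order to produce a relatively compact saturated neighbourhood of a compact subset of $L/L_x$. The remaining implications are formal bookkeeping with preimages and images of compact sets.
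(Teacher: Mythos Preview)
Your proof is correct. The paper does not actually supply a proof of this proposition: it is stated as a standard reformulation and left without argument, and later the bornological analogue (Proposition~\ref{prop:bpropernessandmap}) is justified only by ``The proof is similar to that of Proposition~\ref{prop:propernessandmap}.'' So there is nothing in the paper to compare against beyond the implicit claim that the result is routine.

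For the record, your argument is clean and complete. The verification in (i) that $L_{C,C'}$ equals the projection of $T^{-1}(C\times C')$ to $L$ is exactly what is needed for the converse, and your handling of (iii) via finitely many relatively compact open sets pushed through the open map $\pi^x$ is the standard way to show that quotient maps by compact subgroups are proper. Your identification of (iii) as the only place where local compactness of $L$ is genuinely used is also accurate.
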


We also note that the term ``compact'' of Definition \ref{def:topproper} can be replaced by ``relatively compact''.

\begin{proposition}\label{prop:cptrelcpt}
    For the $L$-action $\rho$ on $X$, the following hold.
    \begin{enumerate}
    \item\label{prop:cptrelcpt:item:proper} The action $\rho$ is proper  if and only if  the subset $L_{C, C'}$ of $L$ is relatively compact in $L$ for each pair $(C,C')$ of relatively compact subsets of $X$.
    \item\label{prop:cptrelcpt:item:wproper}  The action $\rho$ is weakly proper if and only if  the subset $L_{x, C}$ of $L$ is relatively compact in $L$ for each point $x\in X$ and a relatively compact subset $C$ of $X$. 
    \item\label{prop:cptrelcpt:item:CI}  The action $\rho$ has the property (CI) if and only if  the isotropy subgroup $L_x$ of $L$ is relatively compact in $L$ for each point $x \in X$.
\end{enumerate}
\end{proposition}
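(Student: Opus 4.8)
The plan is to deduce each ``relatively compact'' formulation from the corresponding ``compact'' formulation in Definition~\ref{def:topproper} by replacing a relatively compact set by its (compact) closure, and, in the reverse direction, to observe that the sets $L_{C,C'}$, $L_{x,C}$ and $L_x$ attached to \emph{compact} data are automatically closed in $L$, so that relative compactness already forces compactness. I will use two elementary facts about Hausdorff spaces: a subset is relatively compact if and only if it is contained in a compact set (in particular, any subset of a compact set is relatively compact), and $\overline{C}$ is compact whenever $C$ is relatively compact. I spell out item~(i); items~(ii) and~(iii) follow the same pattern and are simpler.

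For the forward implication of~(i), assume $\rho$ is proper and let $C,C'\subseteq X$ be relatively compact. If $lC\cap C'\neq\emptyset$, then $l\overline{C}\cap\overline{C'}\neq\emptyset$, so $L_{C,C'}\subseteq L_{\overline{C},\overline{C'}}$; the right-hand side is compact because $\rho$ is proper and $(\overline{C},\overline{C'})$ is a pair of compact sets, hence $L_{C,C'}$ is contained in a compact set and is relatively compact. For the converse, assume that $L_{C,C'}$ is relatively compact for every pair of relatively compact subsets, and let $C,C'$ be compact. Then $L_{C,C'}$ is relatively compact, so it suffices to check that it is closed in $L$. Since the action is continuous and $C'$ is closed in the Hausdorff space $X$, the set $Z:=\{(l,x)\in L\times C\mid lx\in C'\}$ is closed in $L\times C$, and $L_{C,C'}$ is exactly its image under the projection $\pi\colon L\times C\to L$. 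As $C$ is compact, $\pi$ is a closed map, so $L_{C,C'}$ is closed in $L$; being closed and relatively compact, it is compact. This proves~(i).

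For~(ii) and~(iii) one uses instead the continuous map $\phi_x\colon L\to X$, $l\mapsto lx$. The forward implication of~(ii) follows from the inclusion $L_{x,C}\subseteq L_{x,\overline{C}}$ together with weak properness applied to the point $x$ and the compact set $\overline{C}$; for the converse, if $C$ is compact then $L_{x,C}=\phi_x^{-1}(C)$ is closed in $L$, so relative compactness upgrades to compactness. The forward implication of~(iii) is immediate because compact sets are relatively compact, and for the converse $L_x=\phi_x^{-1}(\{x\})$ is closed in $L$ since $X$ is Hausdorff, so again relative compactness gives compactness.

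The only step beyond routine manipulation of closures and inclusions is the closedness of $L_{C,C'}$ for compact $C,C'$ in~(i), where one uses that the projection of $L\times C$ onto $L$ is a closed map because $C$ is compact; everything else is formal.
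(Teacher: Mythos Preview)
Your proof is correct. The paper itself does not supply a proof of this proposition; it is stated as an elementary observation immediately after Definition~\ref{def:topproper} and Proposition~\ref{prop:propernessandmap}, with the one-line remark that the word ``compact'' in Definition~\ref{def:topproper} may be replaced by ``relatively compact''. Your argument fills in exactly the details one would expect: passing to closures for the forward implications, and for the converse implications noting that the relevant sets are closed (using the closed-projection argument for $L_{C,C'}$ and the preimage description via the orbit map for $L_{x,C}$ and $L_x$). Nothing needs to be changed.
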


\subsection{Characterizations of weakly proper actions and proper actions }\label{subsection:charatop}
In this section, we recall characterizations of weakly proper actions and proper actions of the continuous $L$-action $\rho$ on the locally compact Hausdorff space $X$ proved by Yoshino \cite{Yoshino2006}.

The following two theorems provide more detailed versions of Theorems \ref{theorem:yoshinointrocharacter} and \ref{theorem:yoshinointro}, without the assumption that the space $X$ is separable.

\begin{theorem}[Yoshino \cite{Yoshino2006}]\label{theorem:yoshinowc}
    The following two conditions on the $L$-action $\rho$ on $X$ are equivalent.
    \begin{enumerate}
        \item The action $\rho$ is weakly proper.
        \item The action $\rho$ has the property (CI), each orbit $L\cdot x$ is closed in $X$ and the map
        \[\widetilde{T}^x\colon L/L_x\rightarrow L\cdot x ~([l]\mapsto lx)\] is a homeomorphism for any point $x\in X$.
    \end{enumerate}
\end{theorem}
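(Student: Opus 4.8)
The strategy is to translate ``weakly proper'' into the properness of the orbit map $L \to X,\ l \mapsto lx$ via Proposition \ref{prop:propernessandmap}(ii), and then factor this orbit map through the quotient $L/L_x$. Concretely, for each $x \in X$ the orbit map $T^x\colon L \to X$ factors as $T^x = \iota_x \circ \widetilde T^x \circ \pi^x$, where $\pi^x\colon L \to L/L_x$ is the quotient projection, $\widetilde T^x\colon L/L_x \to L\cdot x$ is the induced continuous bijection, and $\iota_x\colon L\cdot x \hookrightarrow X$ is the inclusion. The plan is to show that ``$T^x$ is proper for all $x$'' is equivalent to the conjunction of the three conditions in (ii): property (CI) (equivalently, by Proposition \ref{prop:propernessandmap}(iii), $\pi^x$ is proper), $L\cdot x$ closed in $X$, and $\widetilde T^x$ a homeomorphism.

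First I would prove (ii) $\Rightarrow$ (i). Given property (CI), $\pi^x$ is proper; given that $\widetilde T^x$ is a homeomorphism it is certainly proper; and given that $L\cdot x$ is closed in the locally compact Hausdorff space $X$, the inclusion $\iota_x$ is proper (a closed subspace inclusion pulls back compact sets to compact sets). Since a composition of proper maps is proper, $T^x = \iota_x \circ \widetilde T^x \circ \pi^x$ is proper for every $x$, hence $\rho$ is weakly proper by Proposition \ref{prop:propernessandmap}(ii). This direction is essentially formal once the factorization is set up.

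The substantive direction is (i) $\Rightarrow$ (ii). Assume $T^x$ is proper for every $x$. For property (CI): the isotropy $L_x = (T^x)^{-1}(\{x\})$ is the preimage of a compact (one-point) set, hence compact, so (CI) holds. For closedness of the orbit: since $L$ is locally compact Hausdorff and in particular compactly generated, a proper map into a locally compact Hausdorff space is closed, so $L\cdot x = T^x(L)$ is closed in $X$; here I would either invoke the standard fact that proper maps between such spaces are closed, or argue directly that if $y \in \overline{L\cdot x}$ one takes a compact neighborhood $C$ of $y$, observes $(T^x)^{-1}(C)$ is compact and $T^x$ restricted to it has closed image containing a neighborhood trace, forcing $y \in L\cdot x$. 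Finally, that $\widetilde T^x$ is a homeomorphism: it is a continuous bijection by construction, so I need it to be proper (equivalently closed). Since $\pi^x$ is a surjective open (quotient) map and $\iota_x$ is a closed embedding onto $L \cdot x$, one can deduce properness of $\widetilde T^x$ from properness of $T^x = \iota_x \circ \widetilde T^x \circ \pi^x$: a compact $K \subseteq L\cdot x$ has $\iota_x(K)$ compact in $X$, so $(T^x)^{-1}(\iota_x(K)) = (\pi^x)^{-1}((\widetilde T^x)^{-1}(K))$ is compact; applying the open surjection $\pi^x$ and using that $\pi^x$ carries this compact saturated set onto $(\widetilde T^x)^{-1}(K)$ shows the latter is compact. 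Hence $\widetilde T^x$ is a continuous proper bijection into a locally compact Hausdorff space, therefore a homeomorphism.

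The main obstacle is the bookkeeping around properness of the three factors in the presence of only local compactness (rather than compactness) — in particular, justifying that a proper continuous map between locally compact Hausdorff spaces is closed, and correctly handling saturated sets under the quotient map $\pi^x$ so that properness descends from $T^x$ to $\widetilde T^x$. Everything else is the formal juggling of the factorization $T^x = \iota_x \circ \widetilde T^x \circ \pi^x$ together with Proposition \ref{prop:propernessandmap}.
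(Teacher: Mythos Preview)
Your proof is correct, but it takes a different route from the paper. You argue directly and topologically: factor $T^x = \iota_x \circ \widetilde T^x \circ \pi^x$ and analyze properness of each factor using standard facts about proper maps between locally compact Hausdorff spaces. The paper instead derives Theorem~\ref{theorem:yoshinowc} as a corollary of its bornological machinery: it passes to the compact bornology via Theorem~\ref{theorem:topbor}, applies the bornological characterization Theorem~\ref{theorem:weakeqqBI} (weakly B-proper $\Leftrightarrow$ property~(BI) plus $\iota_x^*\mathcal{B}_X = T^x_*\mathcal{B}_L$), and then translates back using Proposition~\ref{prop:closedeqqborno} (closedness of $L\cdot x$ $\Leftrightarrow$ $\iota_x^*\mathcal{B}_{\mathrm{cpt}}(X)=\mathcal{B}_{\mathrm{cpt}}(L\cdot x)$) together with Proposition~\ref{prop:TandtildeT} (which is essentially your observation that, under~(CI), $T^x$ is proper iff $\widetilde T^x$ is a homeomorphism). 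Your argument is more elementary and self-contained; the paper's route is deliberately indirect, its point being that the classical topological statement is recovered from the new bornological framework. One minor remark: in your descent of properness from $T^x$ to $\widetilde T^x$, you do not actually need saturation or openness of $\pi^x$ --- mere continuity suffices, since the continuous image of the compact set $(T^x)^{-1}(\iota_x(K))$ under $\pi^x$ is already $(\widetilde T^x)^{-1}(K)$.
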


\begin{theorem}[Yoshino \cite{Yoshino2006}]\label{theorem:yoshinopw}
    Suppose that $X$ is paracompact and the quotient space $L\backslash X$ is also paracompact.
    Then the following two conditions on the $L$-action $\rho$ on $X$ are equivalent. 
    \begin{enumerate}
        \item The action $\rho$ is proper.
        \item The action $\rho$ is weakly proper and there exists a uniform structure $\mathcal{U}$ on $X$ such that the topology on $X$ induced by the uniform structure $\mathcal{U}$ coincides with the original topology on $X$ and the action $\rho$ is equi continuous (i.e.~ for any $U\in \mathcal{U}$, there exists $V\in \mathcal{U}$ such that $L_*V:=\{(lx,ly)\in X\times X\mid l\in L, (x,y)\in V\}$ is a subset of $U$).
    \end{enumerate}
\end{theorem}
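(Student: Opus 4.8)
The plan is to prove the two implications separately, treating $(2)\Rightarrow(1)$ as a covering argument and $(1)\Rightarrow(2)$ as the construction of an $L$-invariant uniform structure. For $(2)\Rightarrow(1)$, assume the action is weakly proper and that $\mathcal{U}$ is a compatible equi continuous uniform structure. Given compact sets $C,C'\subseteq X$, I would first fix an entourage $U\in\mathcal{U}$ small enough that the enlargement $U[C']:=\{y\in X\mid (c',y)\in U\text{ for some }c'\in C'\}$ is relatively compact; this is possible because in a locally compact space a compact set has a relatively compact uniform neighborhood. Using equi continuity, choose $V\in\mathcal{U}$ with $L_*V\subseteq U$, and use compatibility with the topology to cover the compact set $C$ by finitely many $V$-balls $B_V(x_1),\dots,B_V(x_n)$ with $x_i\in C$. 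If $l\in L_{C,C'}$, pick $c\in C$ with $lc\in C'$; then $(x_i,c)\in V$ for some $i$, hence $(lx_i,lc)\in L_*V\subseteq U$, which forces $lx_i\in U[C']$. Thus $l\in L_{x_i,\overline{U[C']}}$, so $L_{C,C'}\subseteq\bigcup_{i=1}^{n}L_{x_i,\overline{U[C']}}$. Each set on the right is compact by weak properness, so $L_{C,C'}$ is relatively compact; being also closed by continuity of the action, it is compact, giving properness.

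For $(1)\Rightarrow(2)$, the weak-properness half is immediate, since every proper action is weakly proper. The substance is to produce the uniform structure, which is the content of Theorem \ref{theorem:yoshinointro} in the separable case; here the paracompactness hypotheses on $X$ and $L\backslash X$ replace separability. My plan is to construct an $L$-invariant compatible uniform structure, because if every entourage $U$ satisfies $L_*U=U$, then taking $V=U$ yields equi continuity for free. First I would use properness to see that the orbit equivalence relation is closed, so the quotient $Q:=L\backslash X$ is Hausdorff; together with the assumed paracompactness, $Q$ is a paracompact locally compact Hausdorff space and hence carries a compatible uniform structure $\mathcal{U}_Q$. Pulling $\mathcal{U}_Q$ back along the projection $\pi\colon X\to Q$ produces $L$-invariant entourages that control the transverse directions but collapse each orbit. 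To separate points within orbits I would cover $Q$ by a locally finite family of relatively compact open sets and, over each piece, use properness (which localizes the relevant group elements into a compact part of $L$) to build an $L$-invariant entourage detecting orbit directions; a partition of unity subordinate to the cover then glues these local entourages into global ones. The union of the pulled-back and the orbit-direction entourages generates an $L$-invariant uniform structure inducing the original topology, and invariance gives equi continuity.

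The hard part will be the orbit-direction construction in $(1)\Rightarrow(2)$: arranging entourages that are simultaneously $L$-invariant, fine enough to separate distinct points of a common orbit (so that the resulting structure recovers the topology rather than the quotient topology), and compatible across the different pieces of the cover of $Q$. This is exactly where the two paracompactness hypotheses and properness are used in tandem: properness keeps the group elements that matter near any compact set within a compact subset of $L$, so the local invariantization is well defined and continuous, while paracompactness of both $X$ and $Q$ supplies the locally finite covers and partitions of unity needed to patch the local data into a globally $L$-invariant gauge. Verifying that the glued entourages still induce the original topology, rather than a coarser one, is the most delicate bookkeeping step.
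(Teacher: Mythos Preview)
The paper does not supply its own proof of Theorem~\ref{theorem:yoshinopw}; it is stated as a result of Yoshino and cited to \cite{Yoshino2006}, so there is no in-paper argument to compare against directly.

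Your argument for $(2)\Rightarrow(1)$ is correct and, in fact, is the uniform-space mirror of the implication \eqref{theorem:proweakeqq:item:unibd}$\Rightarrow$\eqref{theorem:proweakeqq:item:pro} that the paper does prove in Theorem~\ref{theorem:proweakeqq}: cover one compact (respectively bounded) set by finitely many small balls, use equi continuity (respectively equi controlledness) to transport them uniformly under $L$, and reduce $L_{C,C'}$ to a finite union of sets controlled by weak properness. The only small point to add is the standard fact that for a compact $C'$ inside an open $W$ in a uniform space there is an entourage $U$ with $U[C']\subset W$; you invoke this implicitly when you choose $U$ with $U[C']$ relatively compact.

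For $(1)\Rightarrow(2)$ your outline is reasonable but is only a plan, and you correctly flag the orbit-direction construction as the crux. It is worth noting that the paper's coarse analogue, the implication \eqref{theorem:proweakeqq:item:pro}$\Rightarrow$\eqref{theorem:proweakeqq:item:str} in Theorem~\ref{theorem:proweakeqq}, proceeds quite differently: rather than pulling back from the quotient and patching, it writes down an explicit base $\mathcal{E}^0(L,\mathcal{B}_X)=\{(B\times B)_L\cup\diag(X)\mid B\in\mathcal{B}_X\}$ and checks the coarse axioms directly via Lemma~\ref{lem:elb}. That shortcut works in the coarse setting because one only needs the induced bornology to agree with $\mathcal{B}_X$; in the uniform setting one must also recover the original topology, which is precisely where the paracompactness hypotheses and a genuine patching argument become unavoidable. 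Since the paper defers that construction entirely to Yoshino, your sketch is an appropriate level of detail here, but the hard step you identify would indeed need to be carried out in full to constitute a proof.
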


\section{Preliminaries on bornological spaces and coarse spaces}\label{section:bornologycoarse}

In this section, let us recall definitions of bornological spaces, coarse spaces and some properties of them.

\subsection{Bornological spaces}
In this section, we give the definition of bornological spaces (see \cite{HogbeHenri77}) and some basic propositions.

\begin{definition}\label{def:bornology}
    Let $X$ be a set.
    A family $\mathcal{B}\subset \mathcal{P}(X)$ of subsets of $X$ is called a \emph{bornology} on $X$ if it satisfies the following three conditions. 
    \begin{enumerate} 
        \item $\mathcal{B}$ is a covering of $X$ (i.e.~ $\bigcup_{B\in \mathcal{B}}B=X$).
        \item $B_1 ,B_2\in \mathcal{B}$ implies $B_1\cup B_2\in \mathcal{B}$. 
        \item $B\in \mathcal{B}$ and $B'\subset B$ implies $B'\in \mathcal{B}$. 
    \end{enumerate}
    The pair $(X,\mathcal{B})$ is called a \emph{bornological space} and each element $B\in \mathcal{B}$ is referred to as a \emph{bounded set}.
\end{definition}

Next, let us give the notion of  ``bornological base''.
For a set $X$, a family $\mathcal{B}^0$ of subsets of $X$ is said to be a \emph{bornological base} on $X$ if it satisfies the following two conditions:

\begin{enumerate}
    \item[(i)'] $\bigcup_{B\in \mathcal{B}^0}B =X$.
    \item[(ii)'] For each pair $(B_1,B_2)$ of elements of $\mathcal{B}^0$, there exists $B\in \mathcal{B}^0$ such that  $B_1\cup B_2\subset B$. 
\end{enumerate}

For each bornological base $\mathcal{B}^0$, the notation $\langle \mathcal{B}^0\rangle$ is defined as follows:
\begin{align*}    
\langle \mathcal{B}^0\rangle:=\{B\subset X\mid \text{there~ exists~} B^0\in \mathcal{B}^0 \text{~such that~} B\subset B^0 \}
\end{align*}
Then the family $\langle \mathcal{B}^0\rangle$ forms a bornology on $X$. 

\begin{example}\label{example:bornology}
    Let us give four examples of bornologies as below. 
    \begin{enumerate}
        \item Let $X$ be a set. Then the power set $\mathcal{P}(X)$ of $X$ is a bornology on $X$. It is called the \emph{maximal bornology}.
        \item Let $(X,d)$ be a metric space. Then the family $\mathcal{B}_d(X)$ of all metrically bounded subsets is a bornology on $X$. 
        It is called the \emph{metric bornology}. 
        \item Let $X$ be a Hausdorff space. Then the family $\mathcal{B}_\mathrm{cpt}(X)$ of all relatively compact subsets is a bornology on $X$. 
        It is called the \emph{compact bornology}.
    \end{enumerate}
\end{example}

Note that for a metric space $(X,d)$, the space is Haine-Borel if and only if the equation $\mathcal{B}_d(X)=\mathcal{B}_\mathrm{cpt}(X)$ holds.

In the rest of this section, let $(X,\mathcal{B}_X)$ and  $(Y,\mathcal{B}_Y)$ be both bornological spaces.

\begin{definition}\label{def:bmaps}
    Let $f$ be a map from $(X,\mathcal{B}_X)$ to $(Y,\mathcal{B}_Y)$.
    \begin{enumerate}
        \item $f$ is called a \emph{bornological map} if $f(B)\in \mathcal{B}_Y$ for every $B\in \mathcal{B}_X$.
        \item $f$ is called a \emph{proper map} if $f^{-1}(D)\in \mathcal{B}_X$ for every $D\in \mathcal{B}_Y$.
    \end{enumerate}
    
\end{definition}

Let us recall the definitions of direct product of bornologies and induction of a bornology. 

\begin{definition}\label{definition:induction}
    Let $S, T$ be sets, $f\colon S\rightarrow X$ a map, and  $\pi \colon X\rightarrow T$ a surjection.
    \begin{enumerate}
        \item The direct product $\mathcal{B}_X\times \mathcal{B}_Y\subset \mathcal{P}(X\times Y)$ forms a bornological base on $X\times Y$.
        We call $\langle \mathcal{B}_X\times \mathcal{B}_Y \rangle$ the \emph{direct product bornology} on $X\times Y$.
        \item The family $f^*\mathcal{B}_X:=\{f^{-1}(B)\subset S\mid B\in \mathcal{B}_X\}$ forms a bornological base on $S$.
        We call $\langle f^*\mathcal{B}_X \rangle $ the \emph{inverse image bornology} of $\mathcal{B}_X$ by the map $f$. 
        Especially if the map $f$ is inclusion map, the bornology $f^*\mathcal{B}_X =\langle f^*\mathcal{B}_X \rangle $ is called the \emph{subbornology} of $\mathcal{B}_X$.
        \item The family $\pi_* \mathcal{B}_X :=\{ \pi (B)\mid B\in \mathcal{B}_X\}$ on $T$ forms a bornology on $T$. We call it the \emph{image bornology} of $\mathcal{B}_X$ by $\pi$. 
    \end{enumerate}
\end{definition}

\subsection{Coarse spaces and bornological coarse spaces }\label{subsection:coarse}
In this section, we set up our notions for coarse spaces (cf.~\cite{DikranZava2020,Roe2003LectureCoarse}). 

\begin{definition}
    Let $X$ be a set.
    A family $\mathcal{E}\subset \mathcal{P}(X\times X)$ is called a \emph{coarse structure} on $X$ if it satisfies the following five conditions.
    \begin{enumerate}
        \item $\mathrm{diag}(X) := \{(x,x) \mid x \in X\} \in \mathcal{E}$.
        \item $E \in \mathcal{E}$ implies $E^T := \{(x',x) \mid (x,x') \in E\} \in \mathcal{E}$.
        \item $E_1, E_2 \in \mathcal{E}$ implies $E_1 \cup E_2 \in \mathcal{E}$.
        \item $E_1, E_2 \in \mathcal{E}$ implies 
        \begin{multline*}
        E_1 \circ E_2 :=\{ (x,z) \in X \times X \mid \\ \text{there~ exists~} y \in X \text{ such that  } (x,y) \in E_1 \text{ and } (y,z) \in E_2  \}\in \mathcal{E}.
        \end{multline*}
        \item $E \in \mathcal{E}$ implies $E' \in \mathcal{E}$ for each $E' \subset E$.
    \end{enumerate}
    The pair $(X, \mathcal{E})$ is called a \emph{coarse space} and 
each element of $\mathcal{E}$ is referred to as a \emph{controlled set}.
\end{definition}

We call a subset  $\mathcal{E}^0$ of $\mathcal{P}(X\times X)$  a \emph{base of a coarse structure} if it satisfies the following four conditions (cf.~\cite{DikranZava2020}):

\begin{description}
    \item[(i)'] $\diag (X) \subset E'$ for some $E' \in \mathcal{E}^0$.
    \item[(ii)'] For each $E \in \mathcal{E}^0$, there exists $E' \in \mathcal{E}^0$ such that $E^T \subset E'$.
    \item[(iii)'] For each pair $(E_1,E_2)$ of elements of  $\mathcal{E}^0$, 
    there exists $E' \in \mathcal{E}^0$ such that $E_1 \cup E_2 \subset E'$.
    \item[(iv)'] For each pair $(E_1,E_2)$ of elements of   $\mathcal{E}^0$, 
    there exists $E' \in \mathcal{E}^0$ such that $E_1 \circ E_2 \subset E'$.
\end{description}

For a base $\mathcal{E}^0$ of a coarse structure on $X$,  we fix the notation $\langle \mathcal{E}^0 \rangle$ as below:
\[\langle \mathcal{E}^0 \rangle := \{ E\subset X\times X \mid   \text{there exists}\hspace{2mm} E^0\in \mathcal{E}^0  \hspace{2mm}\text{such that}  \hspace{2mm} E\subset E^0\}.\]
Then $\langle \mathcal{E}^0 \rangle$ forms a coarse structure on $X$. 

\begin{example}\label{example:coarse}
    Let $(X,d)$ be a metric space and for each $r\geq 0$, define the set 
    \[
    E_r :=\{(x,y)\in X\times X \mid d(x,y)\leq r \}.
    \]
    Then the collection $\mathcal{E}^0_d :=\{E_r \subset X\times X\mid r\geq 0\}$ is a base of a coarse structure. 
    We call the set $\mathcal{E}_d:=\langle \mathcal{E}^0_d\rangle$
    the \emph{bounded coarse structure}.
\end{example}

Next, we fix some terminologies of maps between coarse spaces.

\begin{definition}
    Let $(X,\mathcal{E}), (Y,\mathcal{F})$ be coarse structures and $f$ a map from $X$ to $Y$.
    \begin{enumerate}
        \item The map $f$ is said to be \emph{controlled} if $(f\times f)(E)\in \mathcal{F}$ for any $E\in \mathcal{E}$. 
        \item The map $f$ is said to be \emph{coarsely surjective} if
        there exists a controlled set $F\in \mathcal{F}$ such that $F[f(X)]=Y$. 
    \end{enumerate}
\end{definition}

Hereafter, let us study bornologies induced by coarse structures.

\begin{proposition}\label{prop:coarseinduced} 
    The family 
    \begin{multline*}
    \mathcal{B}_\mathcal{E}:=\{B\subset X\mid \text{there~exists~a~finite~subset }~ A \subset X, \text{~and~} E\in \mathcal{E}
    \\\text{~such ~that~} B\subset E[A]\}
    \end{multline*}
    of subsets of $X$ is a bornology.
    Each element $B\in \mathcal{B}_\mathcal{E}$ is called a \emph{coarsely bounded set}.
\end{proposition}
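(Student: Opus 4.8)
The plan is to identify $\mathcal{B}_\mathcal{E}$ as the bornology $\langle \mathcal{B}^0\rangle$ generated by a suitable bornological base $\mathcal{B}^0$, so that the three bornology axioms of Definition \ref{def:bornology} reduce to checking the two conditions (i)$'$ and (ii)$'$ for a bornological base recalled just after that definition.

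First I would set
\[
\mathcal{B}^0_\mathcal{E} := \{E[A] \subset X \mid A \subset X \text{ finite},\ E \in \mathcal{E}\},
\]
where, as usual, $E[A] := \{x \in X \mid (a,x) \in E \text{ for some } a \in A\}$. Unwinding the definition in the statement, a subset $B \subset X$ lies in $\mathcal{B}_\mathcal{E}$ if and only if $B \subset B^0$ for some $B^0 \in \mathcal{B}^0_\mathcal{E}$; that is, $\mathcal{B}_\mathcal{E} = \langle \mathcal{B}^0_\mathcal{E}\rangle$.

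Next I would verify that $\mathcal{B}^0_\mathcal{E}$ is a bornological base. For (i)$'$, given $x \in X$ we have $\{x\} = \diag(X)[\{x\}]$ with $\diag(X) \in \mathcal{E}$, so every singleton, and hence $X$ itself, is covered by members of $\mathcal{B}^0_\mathcal{E}$. For (ii)$'$, given $E_1[A_1], E_2[A_2] \in \mathcal{B}^0_\mathcal{E}$, put $A := A_1 \cup A_2$ (still finite) and $E := E_1 \cup E_2 \in \mathcal{E}$, using the union axiom of a coarse structure; the elementary monotonicity $E'[A'] \subset E[A]$ whenever $E' \subset E$ and $A' \subset A$ then yields $E_1[A_1] \cup E_2[A_2] \subset E[A] \in \mathcal{B}^0_\mathcal{E}$. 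Invoking the fact recalled right after the definition of a bornological base, namely that $\langle \mathcal{B}^0\rangle$ is always a bornology, I conclude that $\mathcal{B}_\mathcal{E} = \langle \mathcal{B}^0_\mathcal{E}\rangle$ is a bornology on $X$.

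I do not expect a genuine obstacle here; the argument is essentially bookkeeping. The only points worth a word are the trivial monotonicity of $A \mapsto E[A]$ and $E \mapsto E[A]$, and the observation that only the diagonal and the union axioms of the coarse structure are used — the symmetry, composition, and downward-closure axioms play no role in this statement. One could equally well check the three bornology axioms for $\mathcal{B}_\mathcal{E}$ directly (covering via singletons and $\diag(X)$; closure under finite unions via $A_1 \cup A_2$ and $E_1 \cup E_2$; closure under subsets by transitivity of $\subset$), but passing through the base is the most economical route.
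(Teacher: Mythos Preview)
Your argument is correct and cleanly organized: identifying $\mathcal{B}_\mathcal{E}$ with $\langle \mathcal{B}^0_\mathcal{E}\rangle$ and then verifying the two base conditions (i)$'$ and (ii)$'$ is exactly the right bookkeeping, and your observation that only the diagonal and union axioms of $\mathcal{E}$ are needed is accurate. Note, however, that the paper does not supply a proof of this proposition at all --- it is stated as a standard fact and left unproved --- so there is no ``paper's own proof'' to compare against; your write-up would serve perfectly well as the omitted verification.
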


The following proposition of the bornology $\mathcal{B}_\mathcal{E}$ holds from the definition above. 

\begin{proposition}\label{prop:coarsebornologyproperty}
    For each $E\in \mathcal{E}$ and $B\in \mathcal{B}_\mathcal{E}$, 
    The set $E[B]$ also belongs to $\mathcal{B}_\mathcal{E}$.
\end{proposition}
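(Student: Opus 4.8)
The plan is to unwind the definition of $\mathcal{B}_\mathcal{E}$ from Proposition \ref{prop:coarseinduced} and then push a coarsely bounded set forward along $E$ using the fact that $\mathcal{E}$ is closed under composition. So, given $E\in\mathcal{E}$ and $B\in\mathcal{B}_\mathcal{E}$, I would first invoke the definition to produce a finite subset $A\subset X$ and a controlled set $E_0\in\mathcal{E}$ with $B\subset E_0[A]$.

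The one computation to record is the set-theoretic identity $E[E_0[A]]=(E\circ E_0)[A]$, which unpacks immediately from the definitions of the composition $E\circ E_0$ and of the section operator $E[\,\cdot\,]$. Granting it, monotonicity of $E[\,\cdot\,]$ gives $E[B]\subset E[E_0[A]]=(E\circ E_0)[A]$. Since $\mathcal{E}$ is closed under composition, $E\circ E_0\in\mathcal{E}$, and the finite set $A$ is unchanged; hence $(E\circ E_0)[A]$ is exactly a set of the form appearing in the definition of $\mathcal{B}_\mathcal{E}$. As $\mathcal{B}_\mathcal{E}$ is a bornology (Proposition \ref{prop:coarseinduced}), it is closed under passing to subsets, so $E[B]\in\mathcal{B}_\mathcal{E}$, as desired.

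There is essentially no obstacle here; the only point requiring a little care is bookkeeping with the chosen convention for the section $E[A]$ (whether one reads off the first or the second coordinate), so that the identity $E[E_0[A]]=(E\circ E_0)[A]$ comes out with the factors composed in the right order — under the opposite convention one would instead use $E_0\circ E\in\mathcal{E}$, which is equally available. Finiteness of $A$ is never disturbed, so no separate argument is needed for it.
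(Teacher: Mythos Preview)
Your proof is correct and is precisely the unwinding the paper has in mind: the paper does not give an explicit argument but merely states that the proposition ``holds from the definition above,'' and your use of $E[B]\subset E[E_0[A]]=(E\circ E_0)[A]$ together with closure of $\mathcal{E}$ under composition is exactly that direct verification.
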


The proposition below will be applied in Section
\ref{subsection:propercharacter}.
\begin{proposition}[cf.~\cite{Roe2003LectureCoarse}]\label{prop:connected}
    Assume that the coarse space $(X,\mathcal{E})$ is coarsely connected (i.e.~ for every pair $(x,y)\in X\times X$, the singleton $\{(x,y)\}$ is controlled set).
    Then the following hold.
    \begin{enumerate}
        \item For any $B\in \mathcal{B}_\mathcal{E}$, there exists a point $x\in X$ and a controlled set $E\in \mathcal{E}$ such that the equation $B=E[x]$ holds.
        \item For any $B\in \mathcal{B}_\mathcal{E}$, the direct product $B\times B$ is a member of $\mathcal{E}$.
    \end{enumerate} 
\end{proposition}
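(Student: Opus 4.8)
The plan is to exploit coarse connectedness to absorb any finite set into a single "ball" around a chosen basepoint. Fix $B \in \mathcal{B}_\mathcal{E}$ and, by Proposition \ref{prop:coarseinduced}, choose a finite set $A = \{a_1, \dots, a_n\} \subset X$ and $E \in \mathcal{E}$ with $B \subset E[A]$. If $B$ (equivalently $A$) is empty the statements are trivial, so assume $n \geq 1$ and set $x := a_1$. By coarse connectedness, each singleton $\{(x, a_i)\}$ is controlled, hence so is the finite union $F := \bigcup_{i=1}^n \{(x,a_i)\} \cup \diag(X) \in \mathcal{E}$; note $a_i \in F[x]$ for every $i$, so $A \subset F[x]$. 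Then $B \subset E[A] \subset E[F[x]] = (E \circ F)[x]$. Setting $E' := (E \circ F) \cap (X \times B)$ — which is controlled since it is a subset of the controlled set $E \circ F$ — we have $E'[x] = (E\circ F)[x] \cap B = B$ (the last equality because $B \subset (E\circ F)[x]$), which proves (i).

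For (ii), take the point $x$ and controlled set $E'$ produced in (i), so $B = E'[x]$. We want $B \times B \in \mathcal{E}$. Observe that for any $b, b' \in B = E'[x]$ we have $(b, x) \in (E')^T$ and $(x, b') \in E'$, so $(b, b') \in (E')^T \circ E'$; conversely every pair in $(E')^T \circ E'$ that passes through a point of the form... more carefully: $(E')^T \circ E' = \{(y, y') : \exists z,\ (y,z) \in (E')^T,\ (z,y') \in E'\}$, and since $(E')^T \subset B \times \{x\}$ and $E' \subset \{x\} \times B$, the only contributing $z$ is $x$, giving exactly $B \times B$. Hence $B \times B = (E')^T \circ E' \in \mathcal{E}$ by the coarse structure axioms (transpose and composition).

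The only mild subtlety is the trivial/empty-set case and making sure the intersections with $X \times B$ are taken so that $E'[x]$ is \emph{exactly} $B$ rather than merely containing it; this matters for part (ii), where we need $(E')^T$ and $E'$ to be contained in $B \times \{x\}$ and $\{x\} \times B$ respectively so that the composition collapses precisely to $B \times B$. I expect no serious obstacle here — the real content is simply that coarse connectedness lets one replace a finite set by a single point, after which everything follows from the composition and transpose closure axioms. This is essentially the standard fact that in a coarsely connected coarse space the coarsely bounded sets are exactly the "bounded" sets $E[x]$, and I would cite \cite{Roe2003LectureCoarse} accordingly.
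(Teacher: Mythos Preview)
The paper does not supply a proof of this proposition; it is stated with a citation to Roe and then used (for instance in the final paragraph of the proof of Theorem~\ref{theorem:proweakeqq}). Your argument is the standard one and is correct in substance, but there is a slip in part~(ii): with your choice $E' := (E\circ F)\cap (X\times B)$ one only has $E'\subset X\times B$, not $E'\subset \{x\}\times B$, so the claim that ``the only contributing $z$ is $x$'' is false and the asserted equality $(E')^{T}\circ E'=B\times B$ need not hold. This does no harm to the proof, because you have already shown $B\times B\subset (E')^{T}\circ E'$, and downward closure of $\mathcal{E}$ then yields $B\times B\in\mathcal{E}$; the converse inclusion is simply unnecessary. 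If you prefer the equality to hold exactly, replace $E'$ by $\{x\}\times B$, which is contained in your controlled set and still satisfies $E'[x]=B$.

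One cosmetic point: depending on the convention for $E[x]$ (the paper never fixes it explicitly), the identity $E[F[x]]=(E\circ F)[x]$ may have to read $(F\circ E)[x]$; either way the composite is controlled, so the argument is unaffected.
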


\begin{remark}
    Roe \cite{Roe2003LectureCoarse} defined bounded sets in coarse spaces slightly different from our definition in Proposition \ref{prop:coarseinduced}.
    Leitner and Vigolo also considered bornologies induced by coarse structures based on Roe's bounded sets.
    definition is also slightly different from our definition in Proposition \ref{prop:coarseinduced}.
    In the case of coarsely connected spaces, our definitions coincide with theirs.
\end{remark} 

For a fixed bornology $\mathcal{B}$ on a set $X$, we shall define the coarse structure $\mathcal{E}_\mathcal{B}$ as below. 

\begin{proposition}\label{prop:coarseassociatedb}
    For a bornological space $(X,\mathcal{B})$, fix the notation $\mathcal{E}_\mathcal{B}$ as below:
        \[
            \mathcal{E}_\mathcal{B}:=\langle (\mathcal{B}\times \mathcal{B}) \cup \{\diag(X)\}\rangle)
        \]
        Then it is coarsely connected coarse structure on $X$ such that the equation $\mathcal{B}_{\mathcal{E}_\mathcal{B}}=\mathcal{B}$ holds.
         It is the minimum among the coarsely connected coarse structures $\mathcal{E}$ with $\mathcal{B}_\mathcal{E}=\mathcal{B}$.
        We call the coarse structure  $\mathcal{E}_\mathcal{B}$ the \emph{coarsely connected structure associated to $\mathcal{B}$}.    
\end{proposition}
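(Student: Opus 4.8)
The plan is to produce an explicit base for $\mathcal{E}_\mathcal{B}$, verify the coarse-structure axioms through it, and then read off the three claims by elementary manipulations of the operators $E\mapsto E[A]$, $E\mapsto E^{T}$ and $(E_1,E_2)\mapsto E_1\circ E_2$, invoking Proposition \ref{prop:connected} at the one step that is not purely formal. Since the family $(\mathcal{B}\times\mathcal{B})\cup\{\diag(X)\}$ is not literally a base of a coarse structure (condition (iii)' fails once $X\notin\mathcal{B}$), I would first pass to
\[
\mathcal{E}^{0}:=\{\,\diag(X)\cup(B\times B)\mid B\in\mathcal{B}\,\},
\]
and note that $\langle\mathcal{E}^{0}\rangle$ is the smallest coarse structure containing $(\mathcal{B}\times\mathcal{B})\cup\{\diag(X)\}$, so that $\mathcal{E}_\mathcal{B}=\langle\mathcal{E}^{0}\rangle$: each $B_1\times B_2$ sits inside $\diag(X)\cup(B\times B)$ with $B:=B_1\cup B_2\in\mathcal{B}$, and conversely any coarse structure containing the given family must contain all of $\mathcal{E}^{0}$ by the union axiom. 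That $\mathcal{E}^{0}$ is a base is immediate: (i)' is clear; every member of $\mathcal{E}^{0}$ is symmetric, giving (ii)'; and for (iii)', (iv)' one uses that the union and the composite of $\diag(X)\cup(B_1\times B_1)$ and $\diag(X)\cup(B_2\times B_2)$ are both contained in $\diag(X)\cup(B\times B)$ with $B=B_1\cup B_2$, because composing a diagonal with a ``square'' set returns a subset of that square and $(B_1\times B_1)\circ(B_2\times B_2)\subseteq B_1\times B_2$.

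Coarse connectedness of $\mathcal{E}_\mathcal{B}$ is then immediate: given $(x,y)\in X\times X$, the covering property of $\mathcal{B}$ together with closure under finite unions yields $B\in\mathcal{B}$ with $x,y\in B$, whence $\{(x,y)\}\subseteq\diag(X)\cup(B\times B)\in\mathcal{E}_\mathcal{B}$. For the equality $\mathcal{B}_{\mathcal{E}_\mathcal{B}}=\mathcal{B}$, the inclusion $\mathcal{B}\subseteq\mathcal{B}_{\mathcal{E}_\mathcal{B}}$ follows because for nonempty $B\in\mathcal{B}$ and any $x_0\in B$ one has $B=\bigl(\diag(X)\cup(B\times B)\bigr)[\{x_0\}]$ with $\{x_0\}$ finite (and $\emptyset$ is trivially coarsely bounded). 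Conversely, if $B\in\mathcal{B}_{\mathcal{E}_\mathcal{B}}$, pick a finite $A\subseteq X$ and $E\in\mathcal{E}_\mathcal{B}$ with $B\subseteq E[A]$, then $B_0\in\mathcal{B}$ with $E\subseteq\diag(X)\cup(B_0\times B_0)$; a direct computation gives $E[A]\subseteq A\cup B_0$, and since $\mathcal{B}$ covers $X$ and is closed under finite unions, $A$ is bounded, hence $A\cup B_0\in\mathcal{B}$ and $B\in\mathcal{B}$ by downward closure.

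For minimality, let $\mathcal{E}$ be an arbitrary coarsely connected coarse structure with $\mathcal{B}_\mathcal{E}=\mathcal{B}$. For each $B\in\mathcal{B}=\mathcal{B}_\mathcal{E}$, Proposition \ref{prop:connected}(ii) gives $B\times B\in\mathcal{E}$, and since $\diag(X)\in\mathcal{E}$ and $\mathcal{E}$ is closed under unions, $\diag(X)\cup(B\times B)\in\mathcal{E}$; thus $\mathcal{E}^{0}\subseteq\mathcal{E}$, and closure of $\mathcal{E}$ under subsets gives $\mathcal{E}_\mathcal{B}=\langle\mathcal{E}^{0}\rangle\subseteq\mathcal{E}$. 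I do not anticipate a genuine obstacle: once the honest base $\mathcal{E}^{0}$ is in hand, everything reduces to the coarse-structure axioms and Proposition \ref{prop:connected}. The only points requiring any care are the observation that the family written in the statement is not itself a base, and the computation $E[A]\subseteq A\cup B_0$ underlying $\mathcal{B}_{\mathcal{E}_\mathcal{B}}\subseteq\mathcal{B}$.
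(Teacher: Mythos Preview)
The paper states this proposition without proof, treating it as a routine structural fact. Your argument is correct and complete. Your observation that the family $(\mathcal{B}\times\mathcal{B})\cup\{\diag(X)\}$ is not literally a base of a coarse structure (condition (iii)' fails once $X\notin\mathcal{B}$) is a genuine point of care: the paper's operator $\langle\,\cdot\,\rangle$ is only defined for bases, so the expression in the statement has to be read as ``the coarse structure generated by'' rather than a direct application of that operator. Passing to the honest base $\mathcal{E}^{0}=\{\diag(X)\cup(B\times B)\mid B\in\mathcal{B}\}$ is exactly the right fix, and from there your verifications of the base axioms, coarse connectedness, the equality $\mathcal{B}_{\mathcal{E}_\mathcal{B}}=\mathcal{B}$, and minimality via Proposition~\ref{prop:connected}(ii) are all straightforward and correct.
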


\subsection{Group actions on bornological spaces}\label{subsection:Gbornology}

In this section, we recall the definitions of bornological groups and bornological actions on  bornological spaces.

\begin{definition}[\cite{Pombo2012}]\label{def:bgroup} 
    A group $L$ equipped with a bornology  $\mathcal{B}_L$ is called a \emph{bornological group}  if the multiplication 
    \[
    L\times L\rightarrow L, ~(l_1,l_2)\mapsto l_1l_2
    \]
    and the inversion 
    \[
    L\rightarrow L, ~l\mapsto l^{-1}
    \]
    are both bornological maps. 
    Here, the set $L\times L$ is equipped with the direct product bornology
    (see Definition \ref{definition:induction}).
\end{definition}

\begin{remark}
     Nicas and Rosenthal \cite{NicasRosenthal2012} studied  ``generating family for a compatible coarse structure'' on a group.
     We note that for each bornological group $(L,\mathcal{B}_L)$, the bornology $\mathcal{B}_L$ is a generating family for a compatible coarse structure. 
\end{remark}

Subsequently we define a bornological action of a bornological group.

\begin{definition}\label{def:baction}
    Let $X$ be a set and $(L,\mathcal{B}_L)$ a bornological group.
    An $L$-action $\rho$ on $X$ is called a \emph{bornological action} if the map
    \begin{align*}
        \rho \colon L\times X \rightarrow X,~
        (l,x) \mapsto lx
    \end{align*}
    is bornological. 
    Here, $L\times X$ is equipped with the direct product bornology (see Definition \ref{definition:induction}).
\end{definition}

\begin{example}\label{ex:leftbounded}
    Let $(L,\mathcal{B}_L)$ be a bornological group and $\rho$ the natural left $L$-action on $L$ itself.
    Then the action $\rho$ is a bornological.
\end{example}

\subsection{Group actions on coarse spaces}\label{subsection:Gcoarse}
In this section, we give definitions of  \emph{equi controlled} actions and \emph{coarsely transitive} actions on coarse spaces.
Let $(X,\mathcal{E})$ be a coarse space, $L$ a group and $\rho$  an $L$-action on $X$. 

\begin{definition}[\cite{LeitnerVigolo2023}]\label{def:equicontrolled} 
    The $L$-action $\rho$ on the coarse space $(X,\mathcal{E})$ is called \emph{equi controlled} if for each $E \in \mathcal{E}$, there exists $F \in \mathcal{E}$ such that $E_l:=\{(lx, ly)\mid (x,y)\in E\}\subset F$ for every $l\in L$.
\end{definition}

Note that the above definition is rephrased as $E_L:=\bigcup_{l\in L} E_l$ being a controlled set for each $E\in \mathcal{E}$.

\begin{remark}
    Leitner and Vigolo \cite{LeitnerVigolo2023} considered the term ``equi controlled'' for families of maps indexed by a coarse space.
\end{remark}

\begin{example}\label{ex:leftequi}
    Let $L$ be a group, $(X,d)$ a metric space and $\rho$ an isometric $L$-action on $X$.
    Then the action $\rho$ is equi controlled on $(X,\mathcal{E}_d)$. 
    Here, the family $\mathcal{E}_d$ is the bounded coarse structure on $X$ (see Example \ref{example:coarse}). 
\end{example}

We shall define the coarsely transitive group actions on coarsely connected coarse spaces as below.

\begin{definition}\label{def:coarsetransitive}
    Assume that the coarse space $(X,\mathcal{E})$ is coarsely connected and the $L$-action $\rho$ on $X$ is equi controlled.
    The $L$-action $\rho$ on $(X,\mathcal{E})$ is said to be \emph{coarsely transitive} if it satisfies the following equivalent three conditions. 
    \begin{enumerate}
        \item\label{def:transitive:forall} For any point $x\in X$, the orbit map $T^x$ is coarsely surjective (i.e.~ there exists $E\in \mathcal{E}$ such that $E[L\cdot x]=X$).
        \item\label{def:transitive:exists} There exists a point $x\in X$ such that the orbit map $T^x$ is coarsely surjective.
        \item\label{def:transitive:bounded} There exists a set $B\in \mathcal{B}_\mathcal{E}$ such that $L\cdot B=X$ (the bornology $\mathcal{B}_\mathcal{E}$ is given by Definition \ref{prop:coarseinduced}).
    \end{enumerate}
\end{definition}

We note that each transitive action on coarse spaces is coarsely transitive.

The equivalences claimed in Definition \ref{def:coarsetransitive} follows from the proposition below.

\begin{proposition}\label{prop:nbdinclusion}
    Take $l\in L$ and $E,F\in \mathcal{E}$.
    Assume that the inclusion $E_l\subset F$ holds.
    Then the set $l\cdot E[x]$ is a subset of $ F[lx]$ for any point $x\in X$.
\end{proposition}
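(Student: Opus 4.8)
The plan is to chase the definitions directly. There is essentially no obstacle here: the statement is a one-step unwinding of the notation $E_l$ and $E[\,\cdot\,]$, and the only ``work'' is to observe that the equi controlled hypothesis, packaged through $E_l$, translates verbatim into a containment of orbit neighborhoods.

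First I would fix a point $x\in X$ and pick an arbitrary element $z\in l\cdot E[x]$. By the definition of $E[x]=\{y\in X\mid (x,y)\in E\}$ together with the definition of the translate $l\cdot(\,\cdot\,)$, this means precisely that $z=ly$ for some $y\in X$ with $(x,y)\in E$. Applying the group element $l$ to this pair, the couple $(lx,ly)=(lx,z)$ is, by the very definition of $E_l=\{(lx',ly')\mid (x',y')\in E\}$, an element of $E_l$. Invoking the hypothesis $E_l\subset F$, I obtain $(lx,z)\in F$, which says exactly that $z\in F[lx]$. Since $z\in l\cdot E[x]$ was arbitrary, this gives $l\cdot E[x]\subset F[lx]$, as claimed.

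As every step above is a single set-membership verification, I do not expect any difficulty. It is worth noting, though, that the content of the proposition is not the argument but the bookkeeping: it is this clean passage from ``$E_l\subset F$'' to ``$l\cdot E[x]\subset F[lx]$'' that makes the three conditions in Definition \ref{def:coarsetransitive} equivalent, since it lets one move freely between controlled sets witnessing coarse surjectivity of an orbit map $T^x$ and controlled sets witnessing coarse surjectivity at a translated basepoint $lx$.
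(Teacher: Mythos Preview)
Your proof is correct; it is exactly the routine definition chase the paper has in mind. Note that the paper does not actually write out a proof of this proposition---it states it as self-evident bookkeeping right after Definition~\ref{def:coarsetransitive}---so there is nothing further to compare.
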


\subsection{Coarse structures on bornological groups}

Let $(L,\mathcal{B}_L)$ be a bornological group.
We shall define the coarse structure $\mathcal{E}^R_{\mathcal{B}_L}$ as follows. 
 \begin{proposition}\label{prop:leftinversever}
    We fix two notations as below:
    \begin{align*}
        E^D&:=\{(l,h)\in L\times L\mid l^{-1}h\in D\}~\text{for}~ D\in \mathcal{B}_L, \\
        \mathcal{E}^0&:=\{E^D\mid D\in \mathcal{B}_L\}.
    \end{align*}

    Then the family $\mathcal{E}^0$ forms a base of a coarse structure.
    We write $\mathcal{E}^R_{\mathcal{B}_L}$ for the coarse structure induced by the above $\langle \mathcal{E}^0\rangle$. 
 \end{proposition}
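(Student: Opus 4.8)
The plan is to verify the four base-of-coarse-structure axioms (i)'–(iv)' for the family $\mathcal{E}^0 = \{E^D \mid D\in\mathcal{B}_L\}$ by translating each axiom into a statement about the bornology $\mathcal{B}_L$ and the group structure, using the fact that $(L,\mathcal{B}_L)$ is a bornological group. The key observation throughout is that $(l,h)\in E^D$ exactly means $l^{-1}h\in D$, so questions about diagonals, transposes, unions, and compositions of the $E^D$ translate into questions about which group elements $l^{-1}h$ can arise, and these are governed by the bornological maps giving multiplication and inversion.

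First I would check (i)': since $\mathcal{B}_L$ covers $L$, the identity $e\in L$ lies in some $D_0\in\mathcal{B}_L$, and then $(l,l)\in E^{D_0}$ for all $l$ because $l^{-1}l = e\in D_0$; hence $\diag(L)\subset E^{D_0}$. Next, (iii)' (finite unions): given $E^{D_1}, E^{D_2}$, I would take $D := D_1\cup D_2\in\mathcal{B}_L$ (using axiom (ii) of a bornology) and observe $E^{D_1}\cup E^{D_2}\subset E^{D_1\cup D_2} = E^D$ directly from the defining condition. For (ii)' (transposes): if $(l,h)\in E^D$ then $l^{-1}h\in D$, so $(h,l)\in (E^D)^T$ satisfies $h^{-1}l = (l^{-1}h)^{-1}\in D^{-1}$, where $D^{-1}$ denotes the image of $D$ under the inversion map; since inversion is a bornological map, $D^{-1}\in\mathcal{B}_L$, and thus $(E^D)^T\subset E^{D^{-1}}\in\mathcal{E}^0$.

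The main work is (iv)' (composition), and this is the step I expect to be the principal obstacle. If $(l,z)\in E^{D_1}\circ E^{D_2}$, there is $h\in L$ with $l^{-1}h\in D_1$ and $h^{-1}z\in D_2$; then $l^{-1}z = (l^{-1}h)(h^{-1}z)$ lies in the set $D_1 D_2 := \{d_1 d_2 \mid d_1\in D_1,\, d_2\in D_2\}$, which is the image of $D_1\times D_2\in\langle\mathcal{B}_L\times\mathcal{B}_L\rangle$ under the multiplication map $L\times L\to L$. Since $L\times L$ carries the direct product bornology and multiplication is a bornological map, $D_1 D_2\in\mathcal{B}_L$; setting $D := D_1 D_2$ gives $E^{D_1}\circ E^{D_2}\subset E^D\in\mathcal{E}^0$. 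The only subtlety here is bookkeeping: confirming that $D_1\times D_2$ is indeed bounded in the product bornology (immediate, as it is a product of bounded sets) and that the image of a bounded set under the bornological multiplication map is bounded (immediate from Definition \ref{def:bmaps} and Definition \ref{def:bgroup}).

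Having checked (i)'–(iv)', the passage from $\mathcal{E}^0$ to the coarse structure $\langle\mathcal{E}^0\rangle =: \mathcal{E}^R_{\mathcal{B}_L}$ is then exactly the general construction recalled before Example \ref{example:coarse}, so no further argument is needed; the proposition follows.
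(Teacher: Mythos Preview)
Your verification of the four base axioms (i)'--(iv)' is correct and complete; each step follows directly from the bornological group axioms as you describe. The paper itself states this proposition without proof, so there is no argument to compare against; your direct check is the natural one.
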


\begin{remark}
    Noting that this coarse structure $\mathcal{E}^R_{\mathcal{B}_L}$ has another base as below.
    Let us fix the following notation for each $D\in \mathcal{B}_L$:
     \begin{align*}
    E^{R}_D :=\{(l,h)\in L\times L\mid \text{there~exists~} g\in L \text{~such ~that~} l,h\in gD\}.
    \end{align*}
    Then the family  $\mathcal{E}^0_{\mathcal{B}_L} :=\{E^{R}_D \mid D\in \mathcal{B}_L\}$
    forms a base of the coarse structure $\mathcal{E}^R_{\mathcal{B}_L}$. 
    The coarse structure $\mathcal{E}^R_{\mathcal{B}_L}=\langle \mathcal{E}^0_{\mathcal{B}_L}\rangle$ is denoted by $\mathcal{E}^\mathrm{left}_{\mathcal{E}_\mathrm{min}, \mathcal{B}_L}$ in Leitner and Vigolo's work (see \cite{LeitnerVigolo2023}).
    Here, $\mathcal{E}_\mathrm{min}:=\langle \diag (X)\rangle$ is the minimal coarse structure on the set $L$.
\end{remark}

\section{Proper actions on bornological spaces}\label{section:bproper}

In this section, we focus on bornological proper actions, which generalize the classical notion of topological proper actions.

\subsection{Bornological proper actions}

In this section, we give the definitions of bornological proper actions, weakly bornological proper actions and the property (BI).
Let $(L,\mathcal{B}_L)$ be a bornological group (see Definition \ref{def:bgroup}), $(X,\mathcal{B}_X)$ a bornological space (see Definition \ref{def:bornology}) and $\rho$ a bornological $L$-action on $X$ (see Definition \ref{def:baction}). 

\begin{definition}\label{def:bproper} 
    We fix three terminologies as below.
    \begin{itemize}
        \item The action $\rho$ is said to be \emph{bornological proper} (abbreviated as B-proper in the sequel) if the set 
        \[L_{B, B'}:=\{l\in L\mid lB\cap B'\neq \emptyset \}\]
        is a bounded set in $L$ for each pair $(B,B')$ of elements of $\mathcal{B}_X$.
        \item The action $\rho$ is said to be \emph{weakly bornological proper} (abbreviated as weakly B-proper in the sequel) if the set 
        \[
        L_{x, B}:=\{l\in L\mid lx\in B\}
        \]
         is a bounded set in $L$ for each point $x \in X$ and each set $B\in \mathcal{B}_X$. 
        \item We say that the action $\rho$ has \emph{the property (BI)} if the isotropy $L_x:=\{l\in L\mid lx=x\}  \in \mathcal{B}_L$ for each $x \in X$.
    \end{itemize}
\end{definition}

\begin{remark}\label{remark:coarseproper}
    The notion of ``coarsely $\mathcal{B}$-proper'' can be found in the work of Leitner and Vigolo (the definitions can be seen in Chapter 4 in  \cite{LeitnerVigolo2023}).
\end{remark}

Here is an example of a B-proper action.

\begin{example}
    The left multiplication defines a B-proper action of $(L,\mathcal{B}_L)$ on itself.
\end{example}

One can easily observe that any B-proper action is weakly B-proper and any weakly B-proper action has the property (BI).
The converse claims do not hold in general.
  
\begin{example}\label{ex:bactions}
    Let us give the two actions below.
        \begin{enumerate}
        \item Consider the group $\Z$ and the metric space $(X:=l^2(\Z)\setminus \{0\}, d)$ (here, the metric $d$ is the restriction of the norm on $l^2(\Z)$). Let $\rho$ be a $\Z$-action on $X$ as below:
            \begin{align*}
                \rho \colon \Z \times X\rightarrow X,~(n, \{a_i\}_{i\in \Z})\mapsto \{a_{i+n}\}_{i\in \Z}.
            \end{align*}
            Assume that the space $X$ (resp.~  the group $\Z$) is equipped with  the metric bornology $\mathcal{B}_d(X)$ (resp.~the compact bornology $\mathcal{B}_\mathrm{cpt}(\Z)$).
            Then one can see that the action is fixed point free.
            Thus, the action $\rho$ satisfies the property (BI).
            However, it is not weakly B-proper.
            In fact, let us write $B_1(0)$ for the set $\{x\in X\mid \|x\|\leq 1 \}$ and $\mathbf{a}\in X$ for 
            \begin{equation*}
              a_i=\left\{ 
              \begin{alignedat}{2}   
                1~ & (i=0) \\   
                0~ & (i\neq 0). \\   
              \end{alignedat} 
              \right.
            \end{equation*}
        Then the set $\Z_{\mathbf{a}, B_1(0)}$ is equal to $\Z$.
        Therefore, the action $\rho$ is not weakly B-proper.
        \item Consider the group $\Z$, the set
        \begin{align*}    
        X:=\{(x,y)\in \R^2\mid xy\neq 0\}.
        \end{align*}
        Let us denote a standard metric on $X$ by $d$.
        A $\Z$-action $\rho$ on $X$ is defined as below:
        \begin{align*}
            \rho \colon \Z\times X\rightarrow X,
            ~(n,(x_1,x_2))\mapsto (2^nx_1,2^{-n}x_2)
        \end{align*}
        Consider that the metric space $X$(resp.~ the group $\Z$) is equipped with the metric bornology $\mathcal{B}_d(X)$(resp.~the compact bornology $\mathcal{B}_\mathrm{cpt}(\Z)$).
        Then one can see that the action $\rho$ is fixed point free and weakly B-proper.
        However, it is not B-proper.
        In fact, let us put the set $S:=S^1\cap X$.
        Then the equation $\Z_{S,S}=\Z$ holds.
        \end{enumerate}
    \end{example}

The B-properness of bornological actions obviously depends on bornology on $X$.
The following examples demonstrate this below.

\begin{example}
    Let us consider the two actions.
    \begin{enumerate}
        \item Let us observe the natural shift $\Z$-action on $\R$.
        Denote the standard metric on $\R$ by $d$ and define another metric $d_{1}$ on $\R$ as below:
        \begin{align*}
            d_1(x,y):=\min \{d(x,y), 1\} ~\text{for each } x,y\in \R.
        \end{align*}
        In the setting where $\R$ is equipped with the bornology $\mathcal{B}_d(\R)$, the natural shift action is B-proper.
        However, in the setting of $\R$ equipped with the bornology $\mathcal{B}_{d_1}(\R)$, the natural shift action is not B-proper.
        Moreover, it is not weakly B-proper.
        In fact, the bornology $\mathcal{B}_{d_1}(\R)$ coincides with the power set $\mathcal{P}(\R)$ of $\R$ and the equation $\Z_{0,\R}=\Z$ holds.

        On the other hand, the topologies induced by these two metrics coincide.
        In other words, the two compact bornoloies induced by metrics $d$ and $d_1$ coinsides.
        Therefore, the above example is a phenomenon specific to bornological properness.
        \item Let us recall the setting of (ii) in  Example \ref{ex:bactions}.
        We observed that the $\Z$-action $\rho$ on $(X,\mathcal{B}_d(X))$ is not B-proper.
        Let us consider replacing the metric bornology $\mathcal{B}_d(X)$ with the compact bornology $\mathcal{B}_\mathrm{cpt}(X)$ on $X$.
        Then one can easily check that the action $\rho$ on $(X,\mathcal{B}_\mathrm{cpt}(X))$ is B-proper.
    \end{enumerate}
\end{example}

Definition \ref{def:bproper} can be rephrased as below.

\begin{proposition}\label{prop:bpropernessandmap}
    The following hold.
    \begin{enumerate}
        \item The action $\rho$ is B-proper  if and only if  the map $T\colon L\times X\rightarrow X\times X$ is a proper map (the term ``proper map'' can be seen in Definition \ref{def:bmaps}).
        \item The action $\rho$ is weakly B-proper if and only if  the map 
        \begin{align*}
        L\rightarrow X, ~ l\mapsto lx
        \end{align*}
        is a proper map for each $x\in X$. 
        \item The action $\rho$ has the property (BI) if and only if the quotient map $\pi^x \colon L\rightarrow L/L_x$ is a proper map for each $x \in X$.
    \end{enumerate}
\end{proposition}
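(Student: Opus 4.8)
The plan is to verify each of the three equivalences by directly unwinding the definition of the induced bornologies on $L \times X$, $X \times X$, and $L/L_x$, together with the definition of ``proper map'' from Definition \ref{def:bmaps}. Throughout, I will use that the direct product bornology $\langle \mathcal{B}_L \times \mathcal{B}_X\rangle$ has the family $\{D \times B \mid D \in \mathcal{B}_L,\ B \in \mathcal{B}_X\}$ as a base, so that a subset $S \subset L \times X$ is bounded if and only if $S \subset D \times B$ for some $D \in \mathcal{B}_L$, $B \in \mathcal{B}_X$, and similarly on $X \times X$.

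For (i): the map $T$ is proper iff $T^{-1}(B \times B') \in \langle \mathcal{B}_L \times \mathcal{B}_X\rangle$ for every $B, B' \in \mathcal{B}_X$, since product sets form a base. First I would compute $T^{-1}(B \times B') = \{(l,x) \mid x \in B,\ lx \in B'\}$. The projection of this set to $L$ is contained in $L_{B,B'} = \{l \mid lB \cap B' \neq \emptyset\}$, and its projection to $X$ is contained in $B$. Hence if $\rho$ is B-proper, $L_{B,B'} \in \mathcal{B}_L$ and $T^{-1}(B \times B') \subset L_{B,B'} \times B$ is bounded, so $T$ is proper. Conversely, if $T$ is proper, then $T^{-1}(B \times B') \subset D \times B''$ for some $D \in \mathcal{B}_L$; I would then check that $L_{B,B'}$ is exactly the projection of $T^{-1}(B \times B')$ onto $L$ — every $l \in L_{B,B'}$ witnesses some $x \in B$ with $lx \in B'$, giving $(l,x) \in T^{-1}(B \times B')$ — hence $L_{B,B'} \subset D$ and is bounded. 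Parts (ii) and (iii) follow the same pattern: for (ii), $(l \mapsto lx)^{-1}(B) = L_{x,B}$ directly, so properness of this map is literally the statement that each $L_{x,B}$ is bounded; for (iii), I would use that $\pi^x$ is surjective, that $\mathcal{B}_{L/L_x} = \pi^x_*(\mathcal{B}_L)$ is the image bornology, and that $(\pi^x)^{-1}(\pi^x(D)) = D \cdot L_x$, so that $\pi^x$ is proper iff $D L_x \in \mathcal{B}_L$ for all $D \in \mathcal{B}_L$. Taking $D$ a single point (legitimate since $\mathcal{B}_L$ covers $L$) gives $L_x \in \mathcal{B}_L$; conversely, if $L_x \in \mathcal{B}_L$ then $D L_x$ is the image of $D \times L_x$ under multiplication, hence bounded because $\rho$ (here the multiplication map) is bornological. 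A small point to be careful about is that the image bornology $\pi^x_*(\mathcal{B}_L)$ need not have all of $\mathcal{P}(L/L_x)$-subsets as preimages of product form, but since every bounded subset of $L/L_x$ is contained in some $\pi^x(D)$, it suffices to test properness on the sets $\pi^x(D)$.

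The main obstacle — though it is more bookkeeping than genuine difficulty — is being scrupulous in (iii) about the interaction between the image bornology on the quotient, the ``proper map'' condition (which quantifies over bounded subsets of the \emph{codomain}), and the fact that testing on a base of the codomain bornology suffices. The equivalence ``$\pi^x$ proper $\iff$ property (BI)'' really rests on the identity $(\pi^x)^{-1}(\pi^x(D)) = DL_x$ and on $\mathcal{B}_L$ being a covering, so I would state both explicitly before concluding.
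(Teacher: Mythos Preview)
Your proposal is correct and follows the same direct-verification approach the paper has in mind: the paper's entire proof reads ``The proof is similar to that of Proposition~\ref{prop:propernessandmap},'' and your argument is exactly the kind of definition-unwinding that this one-liner points to. One small notational slip in part~(iii): you write ``because $\rho$ (here the multiplication map) is bornological,'' but $\rho$ denotes the $L$-action on $X$; what you need is that the group multiplication $L\times L\to L$ is bornological, which holds by Definition~\ref{def:bgroup}.
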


The proof is similar to that of Proposition \ref{prop:propernessandmap}.

\subsection{A relationship between bornological proper actions and  topological proper actions}
In this section, we discuss the relationship between topological proper actions (see Definition \ref{def:topproper})  and bornological proper actions (see Definition \ref{def:bproper}). 
Let $L$ be a locally compact Hausdorff group, $X$ a locally compact Hausdorff space and $\rho$ a continuous $L$-action on $X$. 
Consider that the group $L$ and the space $X$ are both equipped with compact bornologies $\mathcal{B}_\mathrm{cpt}(L)$ and $\mathcal{B}_\mathrm{cpt}(X)$ (see Example \ref{example:bornology}).

The following relationship between bornological proper actions and topological proper actions holds from Proposition \ref{prop:cptrelcpt}.

\begin{theorem}\label{theorem:topbor}
    The following  holds.
    \begin{enumerate}
        \item The continuous $L$-action $\rho$ on $X$ is bornological.
        \item The $L$-action $\rho$ on $X$ is proper if and only if it is B-proper.
        \item The $L$-action $\rho$ on $X$ is weakly proper if and only if it is weakly B-proper.
        \item The $L$-action $\rho$ on $X$ has the property (CI) if and only if it has the property (BI).
    \end{enumerate}
\end{theorem}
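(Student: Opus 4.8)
The plan is to reduce everything to Proposition~\ref{prop:cptrelcpt}, using the elementary fact that the bounded sets of $\mathcal{B}_\mathrm{cpt}(L)$ (resp.\ $\mathcal{B}_\mathrm{cpt}(X)$) are \emph{exactly} the relatively compact subsets of $L$ (resp.\ $X$). Granting this, parts (iii) and (iv), and the ``only deep half'' of part (ii), become literal translations of Definition~\ref{def:topproper} once it is restated with ``relatively compact'' in place of ``compact.'' Concretely: by Proposition~\ref{prop:cptrelcpt}(\ref{prop:cptrelcpt:item:proper}), $\rho$ is proper iff $L_{C,C'}$ is relatively compact in $L$ for every pair $(C,C')$ of relatively compact subsets of $X$; unravelling the two compact bornologies, this says precisely that $L_{B,B'}\in\mathcal{B}_\mathrm{cpt}(L)$ for every pair $(B,B')$ of elements of $\mathcal{B}_\mathrm{cpt}(X)$, which is the definition of $\rho$ being B-proper. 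Parts (iii) and (iv) follow identically from Proposition~\ref{prop:cptrelcpt}(\ref{prop:cptrelcpt:item:wproper}) and Proposition~\ref{prop:cptrelcpt}(\ref{prop:cptrelcpt:item:CI}), matching $L_{x,C}$ with $L_{x,B}$ and $L_x$ with $L_x$ respectively.

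The one item needing an actual argument is (i). Here I would first recall that the direct product bornology on $L\times X$ has $\mathcal{B}_\mathrm{cpt}(L)\times\mathcal{B}_\mathrm{cpt}(X)$ as a bornological base, so it suffices to verify that $\rho(C\times C')\in\mathcal{B}_\mathrm{cpt}(X)$ whenever $C\subset L$ and $C'\subset X$ are relatively compact. Since $\overline{C}$ and $\overline{C'}$ are compact, so is $\overline{C}\times\overline{C'}$, and continuity of $\rho$ makes $\rho(\overline{C}\times\overline{C'})$ compact; hence $\rho(C\times C')\subset\rho(\overline{C}\times\overline{C'})$ is relatively compact, i.e.\ bounded in $\mathcal{B}_\mathrm{cpt}(X)$. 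This shows $\rho$ is a bornological map. (As a side remark one may note, though it is not needed, that $\mathcal{B}_\mathrm{cpt}(L\times X)=\langle\mathcal{B}_\mathrm{cpt}(L)\times\mathcal{B}_\mathrm{cpt}(X)\rangle$, so on $L\times X$ the direct product bornology coincides with the compact bornology.)

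There is no serious obstacle in this theorem: its entire content is the bookkeeping identification of the compact bornology with ``relative compactness,'' combined with Proposition~\ref{prop:cptrelcpt}. The only genuine (if wholly elementary) topological input is the one used for (i) — that a continuous map between locally compact Hausdorff spaces carries relatively compact sets to relatively compact sets — which is precisely where the continuity of $\rho$ and the local compactness hypotheses are invoked.
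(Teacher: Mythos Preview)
Your proposal is correct and follows exactly the paper's approach: the paper itself simply states that the theorem ``holds from Proposition~\ref{prop:cptrelcpt}'' with no further argument, and your proof is a faithful unpacking of that one-line reference. Your separate treatment of (i) is appropriate, since Proposition~\ref{prop:cptrelcpt} does not literally cover the bornologicity of $\rho$; the paper leaves this implicit, and your argument (continuous images of relatively compact sets are relatively compact) is the standard and correct way to fill it in.
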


\section{Main results}\label{section:mainthm}

In this section, we provide characterizations of B-proper actions and weakly B-proper actions (see Definition \ref{def:bproper}).  

\subsection{A Characterization of weakly B-proper actions }\label{subsection:weakcharacter}
In this section, we characterize the weakly B-proper actions.
Let $(L,\mathcal{B}_L)$ be a bornological group (see Definition \ref{def:bgroup}), $(X,\mathcal{B}_X)$ a bornological space and $\rho$ a bornological $L$-action on $X$ (see Definition \ref{def:baction}).
For a point $x\in X$, let $\iota_x$ be the inclusion from $L\cdot x$ to $X$, $T^x$ a surjection $L\rightarrow L\cdot x, l\mapsto lx$ and recall the notations as below:
 \begin{align*}
      \iota_x^*\mathcal{B}_X &:=\{\iota^{-1}_x(B)\subset L\cdot x \mid B\in \mathcal{B}_X \},\\
      T_*^x\mathcal{B}_L &:=\{T^x(D)\subset L\cdot x\mid D\in \mathcal{B}_L\}.
 \end{align*}

Weakly B-proper actions are characterized as below.

\begin{theorem}\label{theorem:weakeqqBI}
The following conditions on the bornological $L$-action $\rho$ on $X$ are equivalent. 
    \begin{enumerate}
        \item\label{theorem:weqqb:item:weakly} The action $\rho$  is weakly B-proper.
        \item\label{theorem:weqqb:item:BI} The equation $\iota_x^*\mathcal{B}_X = T_*^x\mathcal{B}_L$ holds for any $x\in X$ and  the action $\rho$ has the property (BI).
    \end{enumerate}
\end{theorem}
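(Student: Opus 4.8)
The plan is to prove the equivalence by establishing both implications directly from the definitions, using the characterization of weak B-properness via proper maps (Proposition \ref{prop:bpropernessandmap}(ii)) and the explicit description of the image and inverse-image bornologies. Throughout I write $L_x$ for the isotropy subgroup at $x$, and I note the set-theoretic identity $(T^x)^{-1}(\iota_x^{-1}(B)) = L_{x,B} = \{l \in L \mid lx \in B\}$ for any $B \subset X$, which is the bridge between the two conditions.

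First I would prove (i) $\Rightarrow$ (ii). Assume $\rho$ is weakly B-proper, i.e.\ $L_{x,B} \in \mathcal{B}_L$ for every $x \in X$ and every $B \in \mathcal{B}_X$. For the property (BI): fix $x$ and pick any $B \in \mathcal{B}_X$ containing the point $x$ (possible since $\mathcal{B}_X$ covers $X$); then $L_x \subset L_{x,B} \in \mathcal{B}_L$, so $L_x \in \mathcal{B}_L$ by the hereditary axiom of bornologies. For the equation $\iota_x^*\mathcal{B}_X = T^x_*\mathcal{B}_L$: the inclusion $T^x_*\mathcal{B}_L \subset \iota_x^*\mathcal{B}_X$ is the easy direction and holds without any properness hypothesis — indeed $\rho$ bornological forces $T^x$ to be a bornological map into $X$ (composition of $\rho$ with the inclusion $L \hookrightarrow L \times X$, $l \mapsto (l,x)$, which is bornological because $\{x\}$ is bounded), so for $D \in \mathcal{B}_L$ we have $T^x(D) = \iota_x^{-1}(T^x(D))$ with $T^x(D) \in \mathcal{B}_X$, giving $T^x(D) \in \iota_x^*\mathcal{B}_X$. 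For the reverse inclusion, take $B \in \mathcal{B}_X$; then $\iota_x^{-1}(B) = L \cdot x \cap B$, and $T^x\bigl((T^x)^{-1}(\iota_x^{-1}(B))\bigr) = \iota_x^{-1}(B)$ since $T^x$ is onto $L \cdot x$; but $(T^x)^{-1}(\iota_x^{-1}(B)) = L_{x,B} \in \mathcal{B}_L$ by weak B-properness, so $\iota_x^{-1}(B) = T^x(L_{x,B}) \in T^x_*\mathcal{B}_L$. This settles (i) $\Rightarrow$ (ii).

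Next I would prove (ii) $\Rightarrow$ (i). Assume property (BI) and $\iota_x^*\mathcal{B}_X = T^x_*\mathcal{B}_L$ for all $x$. Fix $x \in X$ and $B \in \mathcal{B}_X$; I must show $L_{x,B} \in \mathcal{B}_L$. First, $\iota_x^{-1}(B) \in \iota_x^*\mathcal{B}_X = T^x_*\mathcal{B}_L$, so there is $D \in \mathcal{B}_L$ with $\iota_x^{-1}(B) = T^x(D)$. Hence $L_{x,B} = (T^x)^{-1}(\iota_x^{-1}(B)) = (T^x)^{-1}(T^x(D))$. Now $(T^x)^{-1}(T^x(D)) = D \cdot L_x = \{d h \mid d \in D,\ h \in L_x\}$, because two elements of $L$ have the same image under $T^x$ exactly when they differ by an element of $L_x$ on the right. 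Since $D \in \mathcal{B}_L$ and $L_x \in \mathcal{B}_L$ by (BI), and since $L$ is a bornological group so the multiplication map $L \times L \to L$ is bornological, the set $D \cdot L_x$ is the image of $D \times L_x \in \langle \mathcal{B}_L \times \mathcal{B}_L\rangle$ and therefore lies in $\mathcal{B}_L$. Thus $L_{x,B} \in \mathcal{B}_L$, which is exactly weak B-properness.

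The routine parts are the bookkeeping with images and preimages and the covering/hereditary axioms; the one genuinely load-bearing observation — and the step I would present most carefully — is the fibre identity $(T^x)^{-1}(T^x(D)) = D \cdot L_x$ together with the use of the bornological-group axiom to conclude $D \cdot L_x \in \mathcal{B}_L$. Everything else is essentially unwinding Definition \ref{def:baction} and Definition \ref{definition:induction}. One should also remark that the inclusion $T^x_*\mathcal{B}_L \subset \iota_x^*\mathcal{B}_X$ always holds (it only uses that $\rho$ is bornological), so the content of condition (ii) is really the reverse inclusion $\iota_x^*\mathcal{B}_X \subset T^x_*\mathcal{B}_L$ plus (BI); I would flag this to streamline the exposition.
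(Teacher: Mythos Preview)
Your proof is correct and follows essentially the same approach as the paper. The only organizational difference is that the paper factors out the key fibre computation $(T^x)^{-1}(T^x(D)) = D \cdot L_x$ (and the conclusion that this set is bounded) into a separate Proposition~\ref{prop:transitivebounded} about transitive actions, which it then applies to each orbit; you carry out the identical computation inline, which makes your argument slightly more self-contained but otherwise matches the paper step for step.
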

To prove the above theorem, we provide the following 
two propositions.

\begin{proposition}\label{prop:inclusionqiota}
    The inclusion $\iota_x^*\mathcal{B}_X \supset T_*^x\mathcal{B}_L$ holds for each point $x \in X$.
\end{proposition}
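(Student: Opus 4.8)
The plan is to unwind the definitions on both sides of the claimed inclusion and exhibit, for each member of $T_*^x\mathcal{B}_L$, an element of $\mathcal{B}_X$ whose $\iota_x$-preimage contains it. Concretely, a typical element of $T_*^x\mathcal{B}_L$ has the form $T^x(D)$ for some $D\in\mathcal{B}_L$, i.e.\ the subset $D\cdot x=\{lx\mid l\in D\}$ of the orbit $L\cdot x$. I want to show this set lies in $\iota_x^*\mathcal{B}_X$, that is, that there exists $B\in\mathcal{B}_X$ with $D\cdot x\subset\iota_x^{-1}(B)=B\cap(L\cdot x)$; equivalently $D\cdot x\subset B$.

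The key step is to use the hypothesis that $\rho$ is a bornological action, i.e.\ that $\rho\colon L\times X\to X$ is a bornological map with respect to the direct product bornology on $L\times X$. First I would note that $\{x\}\in\mathcal{B}_X$ (singletons are bounded, since the bornology covers $X$ and is downward closed), so $D\times\{x\}$ is an element of the product bornology on $L\times X$ (it is contained in a member of the base $\mathcal{B}_L\times\mathcal{B}_X$, namely $D\times\{x\}$ itself). Applying the bornological map $\rho$, the image $\rho(D\times\{x\})=D\cdot x$ belongs to $\mathcal{B}_X$. Taking $B:=D\cdot x$ then gives $D\cdot x = B\cap (L\cdot x)=\iota_x^{-1}(B)\in\iota_x^*\mathcal{B}_X$, which is exactly what is needed. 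Since $T^x(D)=D\cdot x$ was an arbitrary member of $T_*^x\mathcal{B}_L$, this establishes $T_*^x\mathcal{B}_L\subset\iota_x^*\mathcal{B}_X$.

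There is essentially no obstacle here; the only point requiring a modicum of care is making sure the set-theoretic identifications are correct — in particular that $\iota_x^{-1}(B)$, computed as a subset of $L\cdot x$, equals $B\cap(L\cdot x)$, and that $D\cdot x$ is already a subset of $L\cdot x$ so that intersecting with $L\cdot x$ changes nothing. One should also double-check that $\iota_x^*\mathcal{B}_X$ as defined (the family $\{\iota_x^{-1}(B)\mid B\in\mathcal{B}_X\}$) is literally the inverse image bornology, which it is by Definition \ref{definition:induction} since $\iota_x$ is an inclusion, so no passage to $\langle\,\cdot\,\rangle$ is needed. I would write this up in three or four lines: take $D\in\mathcal{B}_L$, observe $D\times\{x\}$ is bounded in $L\times X$, apply bornologicality of $\rho$ to get $D\cdot x\in\mathcal{B}_X$, and conclude $T^x(D)=\iota_x^{-1}(D\cdot x)\in\iota_x^*\mathcal{B}_X$.
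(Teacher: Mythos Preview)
Your proof is correct and is precisely the unpacking the paper has in mind when it says the proposition ``follows directly from the definition.'' You use exactly the right ingredient---bornologicality of $\rho$ applied to $D\times\{x\}$---to see that $T^x(D)=D\cdot x\in\mathcal{B}_X$, and then the identification $\iota_x^{-1}(D\cdot x)=D\cdot x$ finishes it.
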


\begin{proposition}\label{prop:transitivebounded}
    Let $x$ be a point in $X$.
    Assume that the bornological $L$-action $\rho$ on $X$ is transitive. 
    Then the following two conditions on $\rho$ are equivalent.
    \begin{enumerate}
        \item\label{prop:transitivebounded:proper} 
        For any set $B\in \mathcal{B}_X$, the set $(T^x)^{-1} (B)$ belongs to $\mathcal{B}_L$.
        \item\label{prop:transitivebounded:cpt} 
        The equation $T^x_*\mathcal{B}_L=\mathcal{B}_X$ holds and the set $(T^x)^{-1} (x)$ belongs to $\mathcal{B}_L$.
    \end{enumerate}
\end{proposition}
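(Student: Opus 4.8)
The plan is to prove the two implications separately, using transitivity of $\rho$ to identify $L\cdot x$ with $X$, so that $T^x$ becomes an honest surjection $L\to X$ and $\iota_x$ is the identity. Under this identification $\iota_x^*\mathcal{B}_X=\mathcal{B}_X$, and condition \eqref{prop:transitivebounded:proper} says precisely that $T^x$ is a proper map in the sense of Definition \ref{def:bmaps}. Note also that $(T^x)^{-1}(x)=L_x$ is the isotropy subgroup, so the condition ``$(T^x)^{-1}(x)\in\mathcal{B}_L$'' is exactly the property (BI) at the point $x$.

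For the direction \eqref{prop:transitivebounded:proper}$\Rightarrow$\eqref{prop:transitivebounded:cpt}: assuming $(T^x)^{-1}(B)\in\mathcal{B}_L$ for all $B\in\mathcal{B}_X$, first apply this with $B=\{x\}\in\mathcal{B}_X$ (a singleton is bounded since $\mathcal{B}_X$ covers $X$ and is downward closed) to get $(T^x)^{-1}(x)\in\mathcal{B}_L$. Next I would show $T^x_*\mathcal{B}_L=\mathcal{B}_X$. The inclusion $T^x_*\mathcal{B}_L\subset\mathcal{B}_X$ is immediate because $\rho$ is bornological, hence $T^x=\rho(-,x)$ is a bornological map, so it sends bounded sets to bounded sets; this is essentially Proposition \ref{prop:inclusionqiota}. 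For the reverse inclusion, given $B\in\mathcal{B}_X$, set $D:=(T^x)^{-1}(B)$, which lies in $\mathcal{B}_L$ by hypothesis; since $\rho$ is transitive, $T^x$ is surjective, so $T^x(D)=B$, exhibiting $B\in T^x_*\mathcal{B}_L$.

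For the direction \eqref{prop:transitivebounded:cpt}$\Rightarrow$\eqref{prop:transitivebounded:proper}: suppose $T^x_*\mathcal{B}_L=\mathcal{B}_X$ and $L_x=(T^x)^{-1}(x)\in\mathcal{B}_L$. Take $B\in\mathcal{B}_X$; by the equation there is $D\in\mathcal{B}_L$ with $T^x(D)=B$. Then $(T^x)^{-1}(B)=\bigcup_{d\in D}\,dL_x = D\cdot L_x$, using that the fibres of $T^x$ are the left cosets of $L_x$. So it remains to see that $D\cdot L_x\in\mathcal{B}_L$ whenever $D,L_x\in\mathcal{B}_L$; this follows from the fact that multiplication $L\times L\to L$ is a bornological map (Definition \ref{def:bgroup}) together with the product bornology, since $D\cdot L_x$ is the image of $D\times L_x$ under multiplication. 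Hence $(T^x)^{-1}(B)\in\mathcal{B}_L$, as required.

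The only point requiring care — and the step I expect to be the main obstacle — is the coset identity $(T^x)^{-1}(B)=D\cdot L_x$ and the verification that $D\cdot L_x$ is bounded: one must be slightly careful that $D$ can be enlarged, if convenient, to a union of full cosets without leaving $\mathcal{B}_L$, and that the boundedness of $D\cdot L_x$ genuinely uses the bornological group axioms rather than being automatic. Everything else is a routine unwinding of the definitions of image and inverse-image bornologies from Definition \ref{definition:induction} together with transitivity.
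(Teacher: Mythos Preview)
Your proposal is correct and follows essentially the same route as the paper's own proof: for \eqref{prop:transitivebounded:proper}$\Rightarrow$\eqref{prop:transitivebounded:cpt} you use surjectivity of $T^x$ to write $B=T^x((T^x)^{-1}(B))$, and for \eqref{prop:transitivebounded:cpt}$\Rightarrow$\eqref{prop:transitivebounded:proper} you use the coset identity $(T^x)^{-1}(T^x(D))=D\cdot L_x$ together with the bornological group axioms---exactly as the paper does. Your flagged ``obstacle'' about enlarging $D$ to full cosets is unnecessary: the identity $(T^x)^{-1}(B)=D\cdot L_x$ already holds on the nose for any $D$ with $T^x(D)=B$, and $D\cdot L_x\in\mathcal{B}_L$ follows directly from multiplication being bornological, so no enlargement step is needed.
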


Proposition \ref{prop:inclusionqiota} follows directly from the definition.
Let us give a proof of Proposition \ref{prop:transitivebounded} as below.

\begin{proof}[Proof of Proposition \ref{prop:transitivebounded}]
    Let us suppose \eqref{prop:transitivebounded:proper} and show \eqref{prop:transitivebounded:cpt}.
    One can see that the set $(T^x)^{-1}(x)$ belongs to $\mathcal{B}_L$ and the inclusion $T^x_*\mathcal{B}_L\subset \mathcal{B}_X$ holds.
    Hence we show the converse inclusion.
    Fix a set $B\in \mathcal{B}_X$.
    Since the action $\rho$ is transitive, the equation $B=T^x((T^x)^{-1}(B))$ holds.
    Thus, the set $B$ also belongs to $T^x_*\mathcal{B}_L$.
    Therefore the equation $T^x_*\mathcal{B}_L=\mathcal{B}_X$ holds.
    
    Next, we prove that \eqref{prop:transitivebounded:cpt} implies \eqref{prop:transitivebounded:proper}.
    Take a set $B\subset \mathcal{B}_X$.
    Our goal is to show that the set $(T^x)^{-1}(B)$ belongs to $\mathcal{B}_L$.
    By the equation $T^x_*\mathcal{B}_L=\mathcal{B}_X$, there exists a member $D$ of $\mathcal{B}_L$ such that the set $T^x(D)$ coincides with $B$.
    Further, the equation $(T^x)^{-1}(T^x(D))=D\cdot (T^x)^{-1}(x)$ holds.
    Thus, the set $(T^x)^{-1}(B)=D\cdot (T^x)^{-1}(x)$ is a member of $\mathcal{B}_L$.
\end{proof}

We prove Theorem \ref{theorem:weakeqqBI} as below.

\begin{proof}[Proof of Theorem \ref{theorem:weakeqqBI}]
    From Propositions \ref{prop:bpropernessandmap} and \ref{prop:transitivebounded}, 
    it immediately holds that 
    \eqref{theorem:weqqb:item:BI} implies \eqref{theorem:weqqb:item:weakly}.
    Hence we only need to show the converse claim.
     It is clear that the weakly proper action $\rho$ has the property (BI).
     Fix a point $x \in X$.
     Our goal is to show the equation $\iota_x^* \mathcal{B}_X = T_*^x\mathcal{B}_L$.
    The inclusion $\iota_x^*\mathcal{B}_X \supset T_*^x\mathcal{B}_L$ holds by Proposition \ref{prop:inclusionqiota}.
    Hence it is enough to check the converse inclusion.
    Take $B \in \mathcal{B}_X$.
    From Proposition \ref{prop:transitivebounded},
    it suffices to show that the set $(T^x)^{-1}(\iota_x^{-1}(B))$ is a member of $\mathcal{B}_L$.
    Since the action $\rho$ is weakly B-proper and the equation $(T^x)^{-1}(\iota_x^{-1}(B))=L_{x, B}$ follows, the set $(T^x)^{-1}(\iota_x^{-1}(B))$ belongs to $\mathcal{B}_L$.  
    Therefore, the equation $\iota_x^* \mathcal{B}_X = T_*^x\mathcal{B}_L$ holds.
\end{proof}

Let us observe the following action from the perspective of Theorem \ref{theorem:weakeqqBI}.

\begin{observation}
Let us consider a bornological action as below:
\begin{align*}
 \rho_1 \colon \mathbb{Z}\times S^1\rightarrow S^1, ~(n, e^{2\pi ir})\mapsto e^{2\pi i(r+\sqrt{2}n)}.
\end{align*}
Here, consider that the space $S^1$ (resp.~the group $\Z$) is equipped with the compact bornology $\mathcal{B}_\mathrm{cpt}(S^1)$ (resp.~the compact bornology $\mathcal{B}_\mathrm{cpt}(\Z)$).
One can see that the action $\rho_1$ is not weakly B-proper.
The claim can also be checked by Theorem \ref{theorem:weakeqqBI}.
In fact, Put $x=e^0=1$ and recall two bornologies  $\iota_{x}^*  \mathcal{B}_\mathrm{cpt}(S^1)$ and $T_*^{x} \mathcal{B}_\mathrm{cpt}(\mathbb{Z})$ as follows:
\begin{align*}
    \iota_{x}^*  (\mathcal{B}_\mathrm{cpt}(S^1)) &=\{\{e^{2\sqrt{2}\pi i n} \mid n\in N \} \mid N\subset \mathbb{Z} \}, \\ 
    T_*^{x} (\mathcal{B}_\mathrm{cpt}(\mathbb{Z})) &=\{\{e^{2\sqrt{2}\pi i n} \mid n\in N \} \mid N\subset \mathbb{Z}; ~\text{finite} \}.
\end{align*}

Thus, $\iota_{x}^*  (\mathcal{B}_\mathrm{cpt}(S^1))\neq  T_*^{x} (\mathcal{B}_\mathrm{cpt}(\mathbb{Z}))$ holds and $\rho_1$ is not weakly B-proper.
\end{observation}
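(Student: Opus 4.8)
The plan is to apply Theorem \ref{theorem:weakeqqBI} in its contrapositive form. Since that theorem asserts the \emph{equivalence} of weak B-properness with the conjunction ``$\iota_x^*\mathcal{B}_X = T_*^x\mathcal{B}_L$ for every $x$ and the property (BI),'' it suffices to produce a single point $x\in S^1$ at which the bornology equation $\iota_x^*\mathcal{B}_{\mathrm{cpt}}(S^1) = T_*^x\mathcal{B}_{\mathrm{cpt}}(\Z)$ fails; this alone forces condition \eqref{theorem:weqqb:item:BI} to be false, hence $\rho_1$ cannot be weakly B-proper. I would take the base point $x = e^0 = 1$, whose orbit is $L\cdot x = \{e^{2\pi i\sqrt{2}n}\mid n\in\Z\}$.

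First I would record that the orbit map $T^x\colon \Z\rightarrow L\cdot x$, $n\mapsto e^{2\pi i\sqrt{2}n}$, is injective. Indeed $e^{2\pi i\sqrt{2}n}=e^{2\pi i\sqrt{2}m}$ holds exactly when $\sqrt{2}(n-m)\in\Z$, and since $\sqrt{2}$ is irrational this occurs only for $n=m$. Consequently the orbit $L\cdot x$ is countably infinite and $T^x$ is a bijection onto it.

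Next I would compute the two bornologies explicitly. Because $S^1$ is compact, the closure of every subset is a closed subset of a compact space and hence compact, so every subset of $S^1$ is relatively compact; thus $\mathcal{B}_{\mathrm{cpt}}(S^1)=\mathcal{P}(S^1)$. Pulling this maximal bornology back along the inclusion $\iota_x$ (Definition \ref{definition:induction}) yields $\iota_x^*\mathcal{B}_{\mathrm{cpt}}(S^1)=\mathcal{P}(L\cdot x)$, i.e.\ \emph{all} subsets of the orbit, matching the description $\{\{e^{2\sqrt{2}\pi i n}\mid n\in N\}\mid N\subset\Z\}$ in the statement. On the other side, $\Z$ is discrete, so its relatively compact subsets are precisely the finite ones, giving $\mathcal{B}_{\mathrm{cpt}}(\Z)=\{N\subset\Z\mid N\text{ finite}\}$. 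Applying the image bornology $T_*^x$ and using the injectivity established above, $T_*^x\mathcal{B}_{\mathrm{cpt}}(\Z)$ is exactly the collection of \emph{finite} subsets of $L\cdot x$, i.e.\ $\{\{e^{2\sqrt{2}\pi i n}\mid n\in N\}\mid N\subset\Z,\ \text{finite}\}$.

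Finally I would conclude by exhibiting a witnessing set: the whole orbit $L\cdot x$ lies in $\iota_x^*\mathcal{B}_{\mathrm{cpt}}(S^1)=\mathcal{P}(L\cdot x)$, but it is infinite and therefore does \emph{not} lie in $T_*^x\mathcal{B}_{\mathrm{cpt}}(\Z)$. Hence $\iota_x^*\mathcal{B}_{\mathrm{cpt}}(S^1)\neq T_*^x\mathcal{B}_{\mathrm{cpt}}(\Z)$, condition \eqref{theorem:weqqb:item:BI} of Theorem \ref{theorem:weakeqqBI} fails, and $\rho_1$ is not weakly B-proper. The only step that uses anything beyond directly reading off the compact bornologies is the injectivity of $T^x$, which is where the irrationality of $\sqrt{2}$ enters; since this is elementary, I do not expect any genuine obstacle.
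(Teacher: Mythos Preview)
Your proposal is correct and follows precisely the approach sketched in the Observation itself: choose $x=1$, compute $\iota_x^*\mathcal{B}_{\mathrm{cpt}}(S^1)=\mathcal{P}(L\cdot x)$ and $T_*^x\mathcal{B}_{\mathrm{cpt}}(\Z)=\{\text{finite subsets of }L\cdot x\}$, and conclude via Theorem~\ref{theorem:weakeqqBI}. Your write-up simply makes explicit the routine justifications (irrationality of $\sqrt{2}$ forcing injectivity of $T^x$, compactness of $S^1$ giving $\mathcal{B}_{\mathrm{cpt}}(S^1)=\mathcal{P}(S^1)$, discreteness of $\Z$ giving $\mathcal{B}_{\mathrm{cpt}}(\Z)=\{\text{finite sets}\}$) that the paper leaves to the reader.
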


In the rest of Section \ref{subsection:weakcharacter}, let $L$ be a locally compact Hausdorff group, $X$  a locally compact Hausdorff space, and $\rho$  a continuous $L$-action on $X$.
As Theorem \ref{theorem:yoshinowc}, Yoshino gave the characterization of weakly proper actions from a viewpoint of topology on each orbits.
It is worth emphasizing that Theorem \ref{theorem:yoshinowc} can be readily derived from Theorems \ref{theorem:topbor}, \ref{theorem:weakeqqBI}, Proposition~\ref{prop:transitivebounded}, and the following two propositions.

\begin{proposition}\label{prop:TandtildeT}
    Suppose that the action $\rho$ has the property (CI).
    For each point $x\in X$, the following two conditions on $x$ are equivalent.
    \begin{enumerate}
        \item The map $T^x \colon L\rightarrow L\cdot x, ~l\mapsto lx$ is  a proper map.
        \item The map $\widetilde{T}^x\colon L/L_x \rightarrow L\cdot x, ~lL_x\mapsto lx$ is a homeomorphism.
    \end{enumerate}
\end{proposition}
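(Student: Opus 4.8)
The plan is to exploit the canonical factorization $T^x=\widetilde{T}^x\circ\pi^x$, where $\pi^x\colon L\to L/L_x$ is the quotient map and $\widetilde{T}^x\colon L/L_x\to L\cdot x$ is the continuous $L$-equivariant bijection of the statement. I will use two standing facts: (a) $\pi^x$ is open, surjective and continuous, and, by the property (CI) together with Proposition \ref{prop:propernessandmap}~(iii), it is a proper map; (b) since $L$ is locally compact Hausdorff and $L_x$ is compact (hence closed), the space $L/L_x$ is locally compact Hausdorff. I will also use the routine stability properties of proper maps (in the sense ``preimage of compact is compact''): a composite of proper maps is proper; if $g\circ f$ is proper and $f$ is continuous and surjective then $g$ is proper; and a proper continuous map whose codomain is locally compact Hausdorff is closed.

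The direction (ii) $\Rightarrow$ (i) is immediate: a homeomorphism is proper, so if $\widetilde{T}^x$ is a homeomorphism then $T^x=\widetilde{T}^x\circ\pi^x$ is a composite of proper maps, hence proper. For (i) $\Rightarrow$ (ii) I would first deduce, from the properness of $T^x=\widetilde{T}^x\circ\pi^x$ and the surjectivity of $\pi^x$, that $\widetilde{T}^x$ is itself proper; since $\widetilde{T}^x$ is already a continuous bijection, it then suffices to show that it is a closed map, and for that it is enough to know that its codomain $L\cdot x$, with the subspace topology, is locally compact Hausdorff — equivalently, that $L\cdot x$ is locally closed in $X$. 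Granting this, $\widetilde{T}^x$ is a proper continuous map into a locally compact Hausdorff space, hence closed, hence (being bijective) a homeomorphism. To prove that $L\cdot x$ is locally closed, by $L$-equivariance and the transitivity of the $L$-action on $L\cdot x$ it is enough to exhibit one compact neighborhood of the base point $x$ in $L\cdot x$; since $L_x$ is compact and $L$ is locally compact, the natural candidates are the sets $Kx$ with $K$ a compact ($L_x$-saturated) neighborhood of $L_x$ in $L$, each of which is compact, hence closed, in $X$, so everything reduces to showing that some such $Kx$ is a \emph{neighborhood} of $x$ in $L\cdot x$.

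This last reduction is the main obstacle, and the only place where the local compactness of $L$ and of $X$, and the properness of $T^x$, are all genuinely needed: one must rule out the existence of a net in $(L\cdot x)\setminus Kx$ converging to $x$, by pulling it back along $T^x$ and using properness (combined with a compact neighborhood of $x$ in the locally compact space $X$) to force the pulled-back net into a compact subset of $L$, producing a subnet converging to a point of $(T^x)^{-1}(x)=L_x\subseteq K$ and thereby a contradiction. This is exactly the content that makes Yoshino's Theorem \ref{theorem:yoshinowc} a substantial result — without properness an orbit may fail to be locally closed even when it is $T_1$ and all isotropy groups are compact — so I expect this step to require the most care; everything else (the factorization bookkeeping, the properness of $\widetilde{T}^x$, and the passage from ``proper $+$ bijective'' to ``homeomorphism'') is routine. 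If one prefers, the local closedness of $L\cdot x$ can be extracted from the structure theory of transformation groups on locally compact spaces, or isolated as a separate lemma to be combined with Proposition \ref{prop:TandtildeT} when deriving Theorem \ref{theorem:yoshinowc}.
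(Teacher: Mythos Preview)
The paper offers no proof beyond the remark that the proposition ``can be checked easily,'' so there is no argument to compare against. Your factorization $T^x=\widetilde{T}^x\circ\pi^x$, the direction (ii)$\Rightarrow$(i), and the reduction of (i)$\Rightarrow$(ii) to showing that some $Kx$ is a neighborhood of $x$ in $L\cdot x$ are all correct, and you rightly isolate this last point as the crux.

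The net argument you sketch for that step, however, is circular. You propose to trap the net $(l_\alpha)$ in a compact subset of $L$ by invoking properness of $T^x$ together with a compact neighborhood $W$ of $x$ in $X$; but $T^x$ has codomain $L\cdot x$, so its properness only controls preimages of sets that are compact \emph{in $L\cdot x$}, and $W\cap L\cdot x$ need not be compact --- its compactness is exactly the local compactness of $L\cdot x$ you are trying to prove. Your argument works verbatim if condition (i) is read as ``the map $L\to X,\ l\mapsto lx$, is proper'' (codomain $X$), and this stronger hypothesis is what is actually available when deriving Theorem~\ref{theorem:yoshinowc}: there the orbit is shown to be \emph{closed} separately via Proposition~\ref{prop:closedeqqborno}, so $L\cdot x$ is a closed (hence locally compact Hausdorff) subspace of $X$, the two notions of properness coincide, and a proper continuous bijection into a locally compact Hausdorff space is closed --- presumably the ``easy check'' the paper has in mind, and exactly the alternative you yourself suggest in your final sentence. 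As literally stated, with codomain $L\cdot x$ and no hypothesis on the orbit, (i)$\Rightarrow$(ii) amounts to showing that a proper continuous bijection from a locally compact Hausdorff space onto a merely Hausdorff space is a homeomorphism; this holds when the target is a $k$-space, but arbitrary subspaces of locally compact Hausdorff spaces need not be $k$-spaces, and your net argument does not supply this.
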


\begin{proposition}\label{prop:closedeqqborno}
    Let $S$ be a subset of the locally compact Hausdorff space $X$ and $\iota$ the inclusion map from $S$ to $X$. 
    Then the following conditions are equivalent.
    \begin{enumerate}
        \item\label{prop:closedeqqborno:closed}  The subset $S\subset X$ is closed in $X$.
        \item\label{prop:closedeqqborno:bornoequation}  The equation $\iota^*\mathcal{B}_\mathrm{cpt}(X)=\mathcal{B}_\mathrm{cpt}(S)$ holds. 
    \end{enumerate}
\end{proposition}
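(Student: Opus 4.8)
The plan is to prove the two implications separately, using throughout that in a locally compact Hausdorff space the relatively compact sets are exactly the sets with compact closure, and that $\mathcal{B}_\mathrm{cpt}(S)$ consists of subsets of $S$ that are relatively compact \emph{in $S$}, whereas $\iota^*\mathcal{B}_\mathrm{cpt}(X) = \{A \subset S \mid A \text{ is relatively compact in } X\}$ (using that $\iota^{-1}(C) = C \cap S$ and that $A \subset S$ satisfies $A = A \cap S$). Note one inclusion is automatic and requires no hypothesis: if $A \subset S$ is relatively compact in $S$, say $\mathrm{cl}_S(A)$ is compact, then $\mathrm{cl}_S(A)$ is compact in $X$ as well, so $A$ is relatively compact in $X$; hence $\mathcal{B}_\mathrm{cpt}(S) \subset \iota^*\mathcal{B}_\mathrm{cpt}(X)$ always. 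So the content of the equivalence is the reverse inclusion $\iota^*\mathcal{B}_\mathrm{cpt}(X) \subset \mathcal{B}_\mathrm{cpt}(S)$.

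First I would show \eqref{prop:closedeqqborno:closed} $\Rightarrow$ \eqref{prop:closedeqqborno:bornoequation}. Assume $S$ is closed. Take $A \in \iota^*\mathcal{B}_\mathrm{cpt}(X)$, i.e.\ $A \subset S$ and $\mathrm{cl}_X(A)$ is compact. Since $S$ is closed, $\mathrm{cl}_X(A) \subset S$, and $\mathrm{cl}_X(A) = \mathrm{cl}_S(A)$; thus $\mathrm{cl}_S(A)$ is compact, so $A \in \mathcal{B}_\mathrm{cpt}(S)$. Combined with the automatic inclusion above, this gives the equation.

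Next, \eqref{prop:closedeqqborno:bornoequation} $\Rightarrow$ \eqref{prop:closedeqqborno:closed}, which I expect to be the main obstacle. I would argue by contraposition: suppose $S$ is not closed and produce $A \subset S$ relatively compact in $X$ but not relatively compact in $S$. Pick a point $p \in \mathrm{cl}_X(S) \setminus S$. Using local compactness of $X$, choose an open neighborhood $U$ of $p$ whose closure $\overline{U}$ is compact. Set $A := U \cap S$. Then $A \subset S$ and $\mathrm{cl}_X(A) \subset \overline{U}$ is compact, so $A \in \iota^*\mathcal{B}_\mathrm{cpt}(X)$. On the other hand $p \in \mathrm{cl}_X(A)$ (every neighborhood of $p$, intersected with $U$, meets $S$, since $p \in \mathrm{cl}_X(S)$), so $p \in \mathrm{cl}_X(A) \setminus S$; hence $\mathrm{cl}_S(A) = \mathrm{cl}_X(A) \cap S$ is not closed in $X$, so it is not compact (a compact subset of a Hausdorff space is closed). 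Therefore $A \notin \mathcal{B}_\mathrm{cpt}(S)$, contradicting the assumed equation. The one subtlety to check carefully here is that $p$ genuinely lies in $\mathrm{cl}_X(U \cap S)$ and not merely in $\mathrm{cl}_X(S)$; this is where Hausdorffness is not needed but the fact that $U$ is an \emph{open} neighborhood of $p$ is used, so that for any neighborhood $W$ of $p$ the set $W \cap U$ is again a neighborhood of $p$ and hence meets $S$.

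Assembling the two implications completes the proof; the only external facts invoked are standard point-set topology (compact subsets of Hausdorff spaces are closed, local compactness supplies relatively compact neighborhoods, closures in a subspace are the trace of closures in the ambient space).
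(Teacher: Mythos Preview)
Your proof is correct and follows essentially the same approach as the paper: both arguments note that $\mathcal{B}_\mathrm{cpt}(S)\subset\iota^*\mathcal{B}_\mathrm{cpt}(X)$ holds automatically, dispatch \eqref{prop:closedeqqborno:closed}$\Rightarrow$\eqref{prop:closedeqqborno:bornoequation} by observing that closures in $S$ and $X$ agree when $S$ is closed, and for the converse pick $p\in\mathrm{cl}_X(S)$ together with a compact neighborhood to produce a set in $\iota^*\mathcal{B}_\mathrm{cpt}(X)$ whose $S$-closure must be compact (hence closed in $X$) and contain $p$. The only cosmetic difference is that the paper runs \eqref{prop:closedeqqborno:bornoequation}$\Rightarrow$\eqref{prop:closedeqqborno:closed} directly (showing any $p\in\mathrm{cl}_X S$ lies in $S$) while you phrase it as a contraposition, and the paper uses a compact neighborhood $K_p$ where you use an open $U$ with compact closure; the underlying mechanism is identical.
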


Proposition \ref{prop:TandtildeT} can be checked easily.
Let us give a proof of Proposition \ref{prop:closedeqqborno} as below.

\begin{proof}[Proof of Proposition \ref{prop:closedeqqborno}]
    Firstly, suppose \eqref{prop:closedeqqborno:closed} and prove \eqref{prop:closedeqqborno:bornoequation}.
    One can see that the inclusion $\iota^*\mathcal{B}_\mathrm{cpt}(X)\supset \mathcal{B}_\mathrm{cpt}(S)$ holds in general.
    Hence we only show that  the inclusion $\iota^*\mathcal{B}_\mathrm{cpt}(X)\subset \mathcal{B}_\mathrm{cpt}(S)$ holds. 
    Take $B\in \mathcal{B}_\mathrm{cpt}(X)$. 
    Our goal is to show that the set $\mathrm{cl}_S \iota^{-1}(B)=\mathrm{cl}_S (B\cap S)$ is compact. 
    From the assumption \eqref{prop:closedeqqborno:closed}, the set $\mathrm{cl}_S (B\cap S)$ is also closed in $X$. 
    Since the set $\mathrm{cl}_X B$ is compact and the inclusion $\mathrm{cl}_S (B\cap S)\subset \mathrm{cl}_X B$ holds, the set $\mathrm{cl}_S (B\cap S)$ is also compact and a member of $\mathcal{B}_\mathrm{cpt}(S)$.
    Therefore, the equation $\iota^*\mathcal{B}_\mathrm{cpt}(X)=\mathcal{B}_\mathrm{cpt}(S)$ holds.
    
    Next, we prove that \eqref{prop:closedeqqborno:bornoequation} implies \eqref{prop:closedeqqborno:closed}.
    Take a point $p\in \mathrm{cl}_X S$. 
    It suffices to show $p$ is an element in $S$.
    Since $X$ is locally compact, there exists a compact neighborhood $K_p\in \mathcal{B}_\mathrm{cpt}(X)$ of $p$.
    By the assumption \eqref{prop:closedeqqborno:bornoequation}, 
    there exists $B_S\in \mathcal{B}_\mathrm{cpt}(S)$ such that $K_p\cap S=B_S.$
    Since the set $K_p \cap S$ is closed in $S$, 
    the equation 
    \[
    K_p\cap S= \mathrm{cl}_S B_S=K_p \cap \mathrm{cl}_S B_S 
    \]
    holds.
    In addition, the set $\mathrm{cl}_S B_S$ is compact by $B_S\in \mathcal{B}_\mathrm{cpt}(S)$. Hence the set $K_p\cap S=K_p\cap \mathrm{cl}_S B_S$ is also compact, especially it is closed in $X$.
    Thus, the equation $K_p\cap S=\mathrm{cl}_X (K_p\cap S)$ holds and we only prove that the point $p$ is an element of $\mathrm{cl}_X (K_p\cap S)$.
    For any neighborhood $W_p$ of $p$, the intersection $W_p\cap K_p$ is also the neighborhood of $p$. Hence the set
    $W_p \cap K_p\cap S$ is not the emptyset
    by $p\in \mathrm{cl}_X S$. 
    Thus, the point $p$ is an element of $\mathrm{cl}_X (K_p\cap S)$.
    Therefore, the set $S$ is closed in $X$.
\end{proof}

\subsection{Characterizations of B-proper actions}\label{subsection:propercharacter}

Let $(L,\mathcal{B}_L)$ be a bornological group (see Definition \ref{def:bgroup}), $(X,\mathcal{B}_X)$ a bornological space and $\rho$ a bornological $L$-action on $X$ (see Definition \ref{def:baction}).

For each $E\subset X\times X$ and $l\in L$, 
let us write $E_l$ for the set $\{(lx,ly)\mid (x,y)\in E\}$ and $E_L$ for the union $\cup_{l\in L} E_l$.
Besides, we shall define the notation $E(L,B)$ for each $B\in \mathcal{B}_X$ and put $\mathcal{E}^0(L,\mathcal{B}_X)$ as below:
\begin{align*}
    E(L,B)&:=(B\times B)_L\cup \diag (X)=\{(lb_1,lb_2)\in X\times X\mid l\in L, b_1,b_2\in B\} \cup \diag(X),\\
    \mathcal{E}^0(L,\mathcal{B}_X)&:=\{E(L,B) \subset X\times X\mid B \in \mathcal{B}_X\}.
\end{align*}

As one of the main results, we give a characterization of the B-properness of the action $\rho$ as below. 

\begin{theorem}\label{theorem:proweakeqq}
    The following three conditions on the bornological $L$-action $\rho$ on $X$ are equivalent.
    \begin{enumerate}
        \item\label{theorem:proweakeqq:item:pro} The action $\rho$ is B-proper. 
        \item\label{theorem:proweakeqq:item:str}  The action $\rho$ is  weakly B-proper and the family $\mathcal{E}^0(L,\mathcal{B}_X):=\{E(L,B) \subset X\times X\mid B \in \mathcal{B}_X\}$ is a base of a coarse structure on $X$.
        \item\label{theorem:proweakeqq:item:unibd}  The action $\rho$ is weakly B-proper and there exists a coarse structure $\mathcal{E}$ on $X$ such that the equation $\mathcal{B}_\mathcal{E}=\mathcal{B}_X$ holds and the action $\rho$ is equi controlled on $(X,\mathcal{E})$.
    \end{enumerate}
    In such the situation above, $\mathcal{E}(L,\mathcal{B}_X):=\langle \mathcal{E}^0(L,\mathcal{B}_X)\rangle$ is called a \emph{coarse structure associated by $\rho$}.
    It is the minimum among the coarsely connected coarse structure $\mathcal{E}$ on $X$ such that the equation $\mathcal{B}_\mathcal{E}=\mathcal{B}_X$ holds and the action $\rho$ is equi controlled.
\end{theorem}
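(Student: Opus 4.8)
The plan is to prove the three-way equivalence by the cycle $\eqref{theorem:proweakeqq:item:pro}\Rightarrow\eqref{theorem:proweakeqq:item:str}\Rightarrow\eqref{theorem:proweakeqq:item:unibd}\Rightarrow\eqref{theorem:proweakeqq:item:pro}$ and then to treat the minimality assertion separately. For $\eqref{theorem:proweakeqq:item:pro}\Rightarrow\eqref{theorem:proweakeqq:item:str}$: a B-proper action is automatically weakly B-proper (as remarked after Definition~\ref{def:bproper}), so the task is to verify that $\mathcal{E}^0(L,\mathcal{B}_X)$ fulfils the axioms (i)'--(iv)' of a base of a coarse structure. Conditions (i)' and (ii)' are free because every $E(L,B)$ contains $\diag(X)$ and equals its own transpose, and (iii)' follows at once from $E(L,B_1)\cup E(L,B_2)\subset E(L,B_1\cup B_2)$ with $B_1\cup B_2\in\mathcal{B}_X$. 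The only genuine step is (iv)': given $B_1,B_2\in\mathcal{B}_X$, I would set $B':=B_1\cup B_2\cup\rho(L_{B_1,B_2}\times B_1)$, which lies in $\mathcal{B}_X$ precisely because B-properness gives $L_{B_1,B_2}\in\mathcal{B}_L$ and $\rho$ is a bornological map; then a short chase of a composable pair $(x,y)\in E(L,B_1)$, $(y,z)\in E(L,B_2)$ --- writing $x=l_1b_1$, $y=l_1b_1'=l_2b_2$, $z=l_2b_2'$ and noting $l_2^{-1}l_1\in L_{B_1,B_2}$ --- yields $E(L,B_1)\circ E(L,B_2)\subset E(L,B')$. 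This makes $\mathcal{E}^0(L,\mathcal{B}_X)$ a base and legitimizes $\mathcal{E}(L,\mathcal{B}_X):=\langle\mathcal{E}^0(L,\mathcal{B}_X)\rangle$.

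For $\eqref{theorem:proweakeqq:item:str}\Rightarrow\eqref{theorem:proweakeqq:item:unibd}$: I take $\mathcal{E}:=\mathcal{E}(L,\mathcal{B}_X)$. Each generator $E(L,B)$ is $L$-invariant (both $(B\times B)_L$ and $\diag(X)$ are), hence $E(L,B)_L=E(L,B)\in\mathcal{E}$, which directly gives equi-controlledness. For the identity $\mathcal{B}_\mathcal{E}=\mathcal{B}_X$: the inclusion $\mathcal{B}_X\subset\mathcal{B}_\mathcal{E}$ is clear since a nonempty $B\in\mathcal{B}_X$ satisfies $B\subset E(L,B)[\{b_0\}]$ for any $b_0\in B$; for the reverse inclusion it suffices, by definition of $\mathcal{B}_\mathcal{E}$, to show that $E(L,B_0)[a]$ is bounded in $X$ for every point $a$ and every $B_0\in\mathcal{B}_X$, and here weak B-properness supplies $L_{a,B_0}\in\mathcal{B}_L$, so (inversion being bornological) $E(L,B_0)[a]\subset\{a\}\cup\rho((L_{a,B_0})^{-1}\times B_0)\in\mathcal{B}_X$.

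For $\eqref{theorem:proweakeqq:item:unibd}\Rightarrow\eqref{theorem:proweakeqq:item:pro}$: let $B,B'\in\mathcal{B}_X$, both nonempty without loss of generality. Using $\mathcal{B}_\mathcal{E}=\mathcal{B}_X$ I write $B\subset E[A]$ with $A=\{a_1,\dots,a_n\}$ finite and $E\in\mathcal{E}$, and enlarge $E$ to be symmetric; equi-controlledness then gives $E_L\in\mathcal{E}$, so $B'':=E_L[B']\in\mathcal{B}_\mathcal{E}=\mathcal{B}_X$ by Proposition~\ref{prop:coarsebornologyproperty}. Now for $l\in L_{B,B'}$ pick $b\in B$ with $lb\in B'$ and an index $i$ with $(a_i,b)\in E$; then $(la_i,lb)\in E_l\subset E_L$, whence $la_i\in E_L[lb]\subset E_L[B']=B''$, i.e.\ $l\in L_{a_i,B''}$. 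Therefore $L_{B,B'}\subset\bigcup_{i=1}^{n}L_{a_i,B''}$, which lies in $\mathcal{B}_L$ since $\rho$ is weakly B-proper, so $\rho$ is B-proper.

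Finally, for minimality: $\mathcal{E}(L,\mathcal{B}_X)$ is coarsely connected because $\{x,y\}\in\mathcal{B}_X$ forces $\{(x,y)\}\subset E(L,\{x,y\})$, and the properties $\mathcal{B}_{\mathcal{E}(L,\mathcal{B}_X)}=\mathcal{B}_X$ and equi-controlledness were shown above; and if $\mathcal{E}'$ is any coarsely connected coarse structure on $X$ with $\mathcal{B}_{\mathcal{E}'}=\mathcal{B}_X$ and $\rho$ equi controlled on $(X,\mathcal{E}')$, then for $B\in\mathcal{B}_X=\mathcal{B}_{\mathcal{E}'}$ Proposition~\ref{prop:connected} gives $B\times B\in\mathcal{E}'$, equi-controlledness gives $(B\times B)_L\in\mathcal{E}'$, and $\diag(X)\in\mathcal{E}'$, so $E(L,B)\in\mathcal{E}'$ and hence $\mathcal{E}(L,\mathcal{B}_X)\subset\mathcal{E}'$. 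The principal obstacle is axiom (iv)' in the first implication --- stability of $\mathcal{E}^0(L,\mathcal{B}_X)$ under composition up to enlargement --- which is exactly where full B-properness (not merely weak B-properness) enters, via the correct choice of the enlarged bounded set $B'$.
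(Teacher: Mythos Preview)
Your proposal is correct and follows essentially the same route as the paper's proof: the same cyclic implication $\eqref{theorem:proweakeqq:item:pro}\Rightarrow\eqref{theorem:proweakeqq:item:str}\Rightarrow\eqref{theorem:proweakeqq:item:unibd}\Rightarrow\eqref{theorem:proweakeqq:item:pro}$, the same enlarged bounded set $B'=(L_{B_1,B_2}\cdot B_1)\cup B_1\cup B_2$ for the composition axiom, the same choice $\mathcal{E}=\mathcal{E}(L,\mathcal{B}_X)$ with the same computation of $E(L,B_0)[a]$ via weak B-properness, and the same reduction of $L_{B,B'}$ to a finite union of sets $L_{a_i,B''}$. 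The only cosmetic difference is that in $\eqref{theorem:proweakeqq:item:unibd}\Rightarrow\eqref{theorem:proweakeqq:item:pro}$ you symmetrize $E$ and use $E_L$ directly, whereas the paper keeps a separate $F\supset E_l$ and uses $F^T[D_2]$; and the paper packages the elementary facts about $E(L,B)$ into two preliminary lemmas (Lemmas~\ref{lem:elbnbd} and~\ref{lem:elb}) rather than verifying them inline.
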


To prove Theorem \ref{theorem:proweakeqq}, we propose two lemmas as below. 

\begin{lemma}\label{lem:elbnbd}
    Fix $B\in \mathcal{B}_X$ and $x\in X$.
    Then the following equation holds:
    \[
    E(L,B)[x]=((L_{x, B})^{-1} \cdot B) \cup \{x\}.
    \]
\end{lemma}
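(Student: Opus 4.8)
The strategy is a direct double inclusion, unwinding the definitions of $E(L,B)$ and of the section $E[x]$. Recall that
\[
E(L,B)=\{(lb_1,lb_2)\mid l\in L,\ b_1,b_2\in B\}\cup\diag(X),
\]
so that by definition $E(L,B)[x]=\{y\in X\mid (x,y)\in E(L,B)\}$. The point $x$ itself is always in the right-hand side because $(x,x)\in\diag(X)\subset E(L,B)$, and likewise $x$ appears on the left-hand side via the explicit $\cup\{x\}$; so it suffices to compare $E(L,B)[x]\setminus\{x\}$ with $((L_{x,B})^{-1}\cdot B)\setminus\{x\}$, and in fact it is cleanest just to show the two displayed sets are equal as written.

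First I would prove $E(L,B)[x]\subset ((L_{x,B})^{-1}\cdot B)\cup\{x\}$. Take $y\in E(L,B)[x]$ with $y\neq x$. Then $(x,y)\notin\diag(X)$, so there exist $l\in L$ and $b_1,b_2\in B$ with $x=lb_1$ and $y=lb_2$. From $x=lb_1$ we get $l^{-1}x=b_1\in B$, hence $l^{-1}\in L_{x,B}=\{h\in L\mid hx\in B\}$, i.e. $l\in (L_{x,B})^{-1}$. Then $y=lb_2\in (L_{x,B})^{-1}\cdot B$, as desired. Conversely, for the inclusion $((L_{x,B})^{-1}\cdot B)\cup\{x\}\subset E(L,B)[x]$: since $x\in E(L,B)[x]$ always, take $y=l b$ with $l\in (L_{x,B})^{-1}$ and $b\in B$. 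Then $l^{-1}\in L_{x,B}$, so $l^{-1}x\in B$; set $b_1:=l^{-1}x\in B$ and $b_2:=b\in B$. Then $lb_1=x$ and $lb_2=y$, so $(x,y)=(lb_1,lb_2)\in E(L,B)$, hence $y\in E(L,B)[x]$. Combining the two inclusions gives the claimed equality.

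There is no real obstacle here; the only point requiring a little care is the bookkeeping around the diagonal term and the relation between $L_{x,B}$ and its inverse set, i.e. remembering that $l^{-1}x\in B\iff x\in lB\iff l\in(L_{x,B})^{-1}$. Everything else is a purely formal manipulation of the definitions, so the proof will be short.
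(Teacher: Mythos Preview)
Your proof is correct and is precisely the direct unwinding of definitions that the paper has in mind; the paper itself does not write out a proof but simply declares the lemma clear, so there is nothing further to compare.
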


\begin{lemma}\label{lem:elb}
    Let  $B, B_1$ and $B_2$ be subsets of $X$.
    Then the following hold:
    \begin{enumerate}
        \item\label{lem:elb:inv} $L_*(E(L,B))=E(L,B)$, 
        \item\label{lem:elb:diag} $\diag (X)\subset E(L,B)$,
        \item\label{lem:elb:t} $E(L,B)^T=E(L,B)$,
        \item\label{lem:elb:union} $E(L,B_1)\cup E(L,B_2) \subset E(L,B_1\cup B_2)$,
        \item\label{lem:elb:composition} $E(L, B_1)\circ E(L, B_2)\subset E(L, (L_{B_1, B_2}\cdot B_1)\cup B_1 \cup B_2)$.
    \end{enumerate}
\end{lemma}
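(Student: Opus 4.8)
The plan is to verify all five items directly from the definition $E(L,B)=\{(lb_1,lb_2)\mid l\in L,\ b_1,b_2\in B\}\cup\diag(X)$, treating (i)--(iv) as purely formal manipulations of the group action and reserving the real work for (v).

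For (i), I would note that for each $g\in L$ the translate $(E(L,B))_g$ equals $\{(glb_1,glb_2)\mid l\in L,\ b_1,b_2\in B\}\cup\{(gx,gx)\mid x\in X\}$; since $gL=L$ and $x\mapsto gx$ is a bijection of $X$, this set is again $E(L,B)$, so $L_*(E(L,B))=\bigcup_{g\in L}(E(L,B))_g=E(L,B)$. Item (ii) is immediate. For (iii), the transpose of a generator $(lb_1,lb_2)$ is $(lb_2,lb_1)$, which has the same form, and $\diag(X)^T=\diag(X)$. For (iv), any generator $(lb_1,lb_2)$ of $E(L,B_1)$ has $b_1,b_2\in B_1\subset B_1\cup B_2$ and hence lies in $E(L,B_1\cup B_2)$; symmetrically for $B_2$, and the diagonal is contained in every $E(L,\cdot)$.

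The substantive step is (v). Write $C:=(L_{B_1,B_2}\cdot B_1)\cup B_1\cup B_2$. Given $(x,z)\in E(L,B_1)\circ E(L,B_2)$, I would fix $y\in X$ with $(x,y)\in E(L,B_1)$ and $(y,z)\in E(L,B_2)$. If either pair lies on the diagonal, then $(x,z)$ equals the other pair, which lies in $E(L,B_1)\cup E(L,B_2)\subset E(L,C)$ by (iv) since $B_1,B_2\subset C$. Otherwise write $(x,y)=(l_1a_1,l_1a_2)$ and $(y,z)=(l_2c_1,l_2c_2)$ with $l_1,l_2\in L$, $a_1,a_2\in B_1$, $c_1,c_2\in B_2$. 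From $l_1a_2=y=l_2c_1$ I get $(l_2^{-1}l_1)a_2=c_1\in B_2$, so $(l_2^{-1}l_1)B_1\cap B_2\neq\emptyset$, i.e.~$l_2^{-1}l_1\in L_{B_1,B_2}$, whence $(l_2^{-1}l_1)a_1\in L_{B_1,B_2}\cdot B_1\subset C$. Then $x=l_1a_1=l_2\bigl((l_2^{-1}l_1)a_1\bigr)$ and $z=l_2c_2$ exhibit $(x,z)$ as $(l_2b',l_2b'')$ with $b':=(l_2^{-1}l_1)a_1\in C$ and $b'':=c_2\in B_2\subset C$, so $(x,z)\in E(L,C)$, as required.

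I expect the only genuine obstacle to be the bookkeeping in (v): one must pick a common left multiplier (here $l_2$) so that \emph{both} coordinates of $(x,z)$ land in the single set $C$, and identify the "middle" element $l_2^{-1}l_1$ as belonging to $L_{B_1,B_2}$. This is the coarse-geometric analogue of "radii add under composition," transported to group orbits; everything else is routine.
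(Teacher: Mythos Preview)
Your proof is correct and follows essentially the same approach as the paper: (i)--(iv) are dismissed as formal, and for (v) the key observation is that the ``middle'' group element (your $l_2^{-1}l_1$, the paper's $h^{-1}l$) lies in $L_{B_1,B_2}$, which lets one rewrite both coordinates with the common left multiplier $l_2$. The only cosmetic difference is that the paper organizes the argument set-theoretically---expanding $E(L,B_1)\circ E(L,B_2)$ as a union of products $(B_1\times B_1)_l\circ(B_2\times B_2)_h$ plus three residual pieces---while you argue elementwise with a diagonal/non-diagonal case split; the content is the same.
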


The former lemma is clear, we prove the later as below.

\begin{proof}[Proof of Lemma \ref{lem:elb}]
    Fix subsets $B, B_1$ and $B_2$ of $X$.
    By the definition of $E(L,B)$, four conditions \eqref{lem:elb:inv}', \eqref{lem:elb:diag}', \eqref{lem:elb:t}' and \eqref{lem:elb:union}' follow. 
    Hence we only need to check the condition \eqref{lem:elb:composition}'.
    The following equation holds by the definition of $E(L,B_i)~(i=1,2)$:
    \begin{multline*}
    E(L,B_1)\circ E(L,B_2)\\
    =\bigcup_{l,h\in L}((B_1\times B_1)_l\circ (B_2\times B_2)_h) \cup  (B_1\times B_1)_L \\\cup (B_2\times B_2)_L
    \cup \mathrm{diag}(X).
    \end{multline*}
    These three sets $(B_1\times B_1)_L, (B_2\times B_2)_L$ and $\mathrm{diag}(X)$ are subset of $E(L, (L_{B_1, B_2}\cdot B_1)\cup B_1 \cup B_2)$ from the condition \eqref{lem:elb:union}'.
    Hence it is enough to check that the inclusion
    \[\bigcup_{l,h\in L}((B_1\times B_1)_l\circ (B_2\times B_2)_h)\subset E(L, (L_{B_1, B_2}\cdot B_1)\cup B_1 \cup B_2)\]
    holds.
    For each pair $(l,h)$ of elements of $L$,  in the case of $lB_1\cap hB_2=\emptyset$, the set $(B_1\times B_1)_l\circ (B_2\times B_2)_h$ is the emptyset, otherwise the following equation holds :
    \begin{align*}\label{lem:elbcomposition:inclon1}
    (B_1\times B_1)_l\circ (B_2\times B_2)_h= lB_1\times hB_2=hh^{-1}lB_1\times hB_2.
    \end{align*}
    
    Further, the condition $lB_1\cap hB_2\neq \emptyset$ is equivalent to  $h^{-1} l\in L_{B_1, B_2}$ for any pair $(l,h)$ of elements of $L$. Hence we obtain the following inclusion for every pair $(l,h)$ of elements of $L$:
    \begin{align*}   
    (B_1\times B_1)_l\circ (B_2\times B_2)_h\subset h(L_{B_1, B_2}\cdot B_1)\times hB_2.
    \end{align*}
    Thus, the following inclusion also holds:
    \begin{align*}
    \bigcup_{l,h\in L} (B_1\times B_1)_l\circ (B_2\times B_2)_h&\subset \bigcup_{h\in L}  h(L_{B_1, B_2}\cdot B_1)\times hB_2\\
    &\subset E(L, (L_{B_1, B_2}\cdot B_1)\cup B_1 \cup B_2). 
    \end{align*}
    Therefore, the inclusion 
    \[
    E(L, B_1)\circ E(L, B_2)\subset E(L, (L_{B_1, B_2}\cdot B_1) \cup B_1\cup B_2)
    \]
    follows.
\end{proof}

Let us prove Theorem \ref{theorem:proweakeqq} as below.

\begin{proof}[Proof of Theorem \ref{theorem:proweakeqq}]
    Firstly, we show that \eqref{theorem:proweakeqq:item:pro} implies \eqref{theorem:proweakeqq:item:str}: 
    Suppose \eqref{theorem:proweakeqq:item:pro} and let us check following four conditions.
    \begin{enumerate}[(i)']	
        \item\label{proof:proweakeqq:item1} There exists $E' \in \mathcal{E}^0(L,\mathcal{B}_X)$ such that $\diag (X) \subset E'$.
        \item\label{proof:proweakeqq:item2} For each $E \in \mathcal{E}^0(L,\mathcal{B}_X)$, there exists $E' \in \mathcal{E}^0(L,\mathcal{B}_X)$ such that $E^T \subset E'$.
        \item\label{proof:proweakeqq:item3} For each pair $(E_1,E_2)$ of elements of  $\mathcal{E}^0(L,\mathcal{B}_X)$, 
        there exists $E' \in \mathcal{E}^0(L,\mathcal{B}_X)$ such that $E_1 \cup E_2 \subset E'$.
        \item For each pair $(E_1,E_2)$ of elements of   $\mathcal{E}^0(L,\mathcal{B}_X)$, 
        there exists $E' \in \mathcal{E}^0(L,\mathcal{B}_X)$ such that $E_1 \circ E_2 \subset E'$. \label{proof:proweakeqq:item4}
    \end{enumerate}
    
    \eqref{proof:proweakeqq:item1}', \eqref{proof:proweakeqq:item2}' and  \eqref{proof:proweakeqq:item3}' follow by Lemma \ref{lem:elb}.
    Hence we only need to show that the family $\mathcal{E}^0(L,\mathcal{B}_X)$ satisfies the condition \eqref{proof:proweakeqq:item4}'. 
    Fix elements $E_1$ and $E_2$ of $\mathcal{E}^0(L,\mathcal{B}_X)$.
    Without loss of generality, we may assume that there exists
    a pair $(B_1,B_2)$ of elements of $\mathcal{B}_X$ such that $E_i=E(L,B_i)$ (for each $i=1,2$).
    The following inclusion holds by Lemma \ref{lem:elb}:
    \begin{align*}
        E(L, B_1)\circ E(L, B_2)\subset E(L, (L_{B_1, B_2}\cdot B_1) \cup B_1 \cup B_2).
    \end{align*}
    The set  $L_{B_1, B_2}\cdot B_1$ is a member of the bornology $\mathcal{B}_X$ since the action $\rho$ is B-proper.  
   Thus, the set $(L_{B_1, B_2}\cdot B_1) \cup B_1 \cup B_2$ also belongs to $\mathcal{B}_X$.
    Therefore, the family $\mathcal{E}^0(L,\mathcal{B}_X)$ is a base of a coarse structure. 
   
    Secondly, we show that \eqref{theorem:proweakeqq:item:str} leads to \eqref{theorem:proweakeqq:item:unibd}. 
    Our goal is to show that the equation $\mathcal{B}_{\mathcal{E}(L,\mathcal{B}_X)}=\mathcal{B}_X$ holds and the action $\rho$ on $(X, \mathcal{E}(L,\mathcal{B}_X))$ is equi controlled. 
     From Lemma \ref{lem:elb}, one can see that the action $\rho$ is equi controlled.
    Hence let us prove the equation $\mathcal{B}_{\mathcal{E}(L,\mathcal{B}_X)}=\mathcal{B}_X$. 
    We check the inclusion $\mathcal{B}_{\mathcal{E}(L,\mathcal{B}_X)}\supset \mathcal{B}_X$ earlier.
    Take $B\in \mathcal{B}_X$ and a point $x\in B$. 
    From Lemma \ref{lem:elbnbd},  the set $E(L,B)[x]$ is equal to $((L_{x, B})^{-1} \cdot B) \cup \{x\}$.
    Since the unit element $e\in L$ also belongs to the set $L_{x,B}$,
    the inclusion $B\subset E(L,B)[x]$ follows.
    Thus, the set $B$ is a member of $\mathcal{B}_{\mathcal{E}(L,\mathcal{B}_X)}$.
    Therefore, the inclusion $\mathcal{B}_{\mathcal{E}(L,\mathcal{B}_X)}\supset \mathcal{B}_X$ holds.
    Next, we check the converse inclusion.
    Take finite points $x_1, \cdots ,x_n \in X$ and a controlled set $E\in \mathcal{E}$.
    Without loss of generality, we may assume that there exists an element $B\in \mathcal{B}_X$ such that $E=E(L,B)$.
    Our goal is to show that  the set $\bigcup_{i=1}^n E(L,B)[x_i]$ belongs to $\mathcal{B}_X$.
    By Lemma \ref{lem:elbnbd}, the following inclusion holds:
        \[
        \bigcup_{i=1}^n E(L,B)[x_i] \subset \bigcup_{i=1}^n (((L_{x_i, B})^{-1} \cdot B) \cup \{x_i\}).
        \]
    The set $\bigcup_{i=1}^n (((L_{x_i, B})^{-1} \cdot B) \cup \{x_i\})$ is a member of the bornology $\mathcal{B}_X$ since the action $\rho$ is weakly B-proper.
    Thus, the set $\bigcup_{i=1}^n E(L,B)[x_i]$ belongs to $\mathcal{B}_X$
    and the equation $\mathcal{B}_{\mathcal{E}(L,\mathcal{B}_X)}=\mathcal{B}_X$ follows.
    Therefore, we obtain that  \eqref{theorem:proweakeqq:item:str} leads to \eqref{theorem:proweakeqq:item:unibd}. 
    
    Thirdly, we prove that  \eqref{theorem:proweakeqq:item:unibd} implies \eqref{theorem:proweakeqq:item:pro}. 
    Let us take $D_1, D_2\in \mathcal{B}_X$. 
    Our goal is to show that the set $L_{D_1, D_2}$ is a member of the bornology $\mathcal{B}_L$.
    By the equation $\mathcal{B}_X=\mathcal{B}_\mathcal{E}$, there exists finite points $x_1, \cdots ,x_n \in X$ and $E\in \mathcal{E}$ such that $D_1\subset \bigcup_{i=1}^n E[x_i]$.
    Hence we obtain that 
    \begin{align}\label{align:twothreeinclusion1}
    L_{D_1, D_2} &\subset \{l\in L\mid l\cdot  (\bigcup_{i=1}^n E[x_i])\cap D_2 \neq \emptyset\}. 
    \end{align}
    Since the action $\rho$ is equi controlled on $(X, \mathcal{E})$, there exists $F\in \mathcal{E}$ such that  $ E_l\subset F$ for any $l\in L$.
    From Proposition \ref{prop:nbdinclusion}, the following inclusion holds for each $i=1, \cdots, n$:
    \begin{align}\label{align:twothreeinclusion2}
        \{l\in L\mid l\cdot  (E[x_i])\cap D_2 \neq \emptyset\} \subset \{l\in L\mid  (F[l x_i]\cap D_2) \neq \emptyset\}.
    \end{align}
    By \eqref{align:twothreeinclusion1} and \eqref{align:twothreeinclusion2}, we obtain the following inclusion: 
    \[
    L_{D_1, D_2} \subset \bigcup_{i=1}^n \{l\in L\mid l x_i \cap F^T[D_2] \neq \emptyset\} = \bigcup_{i=1}^n (L_{x_i, F^T[D_2]}).  
    \]
    The set $F^T[D_2]$ is a member of $\mathcal{B}_\mathcal{E}$ from Proposition \ref{prop:coarsebornologyproperty}.
    Since the equation $\mathcal{B}_\mathcal{E}=\mathcal{B}_X$ holds and the action $\rho$ is weakly B-proper,
     the subset $L_{x_i, F^T[D_2]}\subset L$ belongs to $\mathcal{B}_L$ for each $i=1,\cdots ,n$.
    Thus, the set $L_{D_1, D_2}$ also a member of $\mathcal{B}_L$. 
    Therefore, the implication  \eqref{theorem:proweakeqq:item:unibd} $\Rightarrow$ \eqref{theorem:proweakeqq:item:pro} holds.
    
    Finally, assume that the $L$-action $\rho$ on $(X,\mathcal{B}_X)$ is B-proper and take a coarsely connected coarse structure $\mathcal{E}$ on $X$ such that the equation $\mathcal{B}_\mathcal{E}=\mathcal{B}_X$ holds and the action $\rho$ is equi controlled.
    Our goal is to show that the inclusion $\mathcal{E}(L,\mathcal{B}_X)\subset \mathcal{E}$ holds. 
    Fix $E\in \mathcal{E}(L,\mathcal{B}_X)$.
    Without loss of generality, we can assume that there exists $B\in \mathcal{B}_X$ such that $E=E(L,B)$ holds.
    The set $B\times B$ is controlled set in $(X,\mathcal{E})$ from the coarsely connectedness of $\mathcal{E}$.
    Hence we obtain that $E(L,B)$ is a member of $\mathcal{E}$ since the action $\rho$ is equi controlled on $(X,\mathcal{E})$.
    Thus, the inclusion $\mathcal{E}(L,\mathcal{B}_X)\subset \mathcal{E}$ follows.
    Therefore, the associated coarse structure $\mathcal{E}(L,\mathcal{B}_X)$ with $\rho$ is the minimum among the coarsely connected coarse structure $\mathcal{E}$ on $X$ such that the equation $\mathcal{B}_\mathcal{E}=\mathcal{B}_X$ holds.
\end{proof}

\begin{remark}
    In the case where $X$ is a Hausdorff space  equipped with the compact bornology (see Example \ref{example:bornology}), the implication \eqref{theorem:proweakeqq:item:pro} $\Rightarrow$ \eqref{theorem:proweakeqq:item:str} in Theorem \ref{theorem:proweakeqq} can be found in Nicas and Rosenthal \cite{NicasRosenthal2012}.
\end{remark}

\subsection{Coarse structures associated with B-proper actions}\label{subsection:cstrbproper}

In this section, we study some examples of coarse structures associated with B-proper actions (see Theorem \ref{theorem:proweakeqq}).

\begin{proposition}\label{prop:poweraction}
    Let $L$ be a group, $(X,\mathcal{B}_X)$ a bornological space and $\rho$ a $L$-action on $X$.
    Consider that the group $L$ is equipped with the maximal bornology $\mathcal{P}(L)$ and assume that the action $\rho$ is bornological.
    Then the action $\rho$ is B-proper.
    Furthermore, the equation $\mathcal{E}(L,\mathcal{B}_X) =\mathcal{E}_{\mathcal{B}_X}$ holds (the notation $\mathcal{E}_{\mathcal{B}_X}$ is defined in \ref{prop:coarseassociatedb}). 
\end{proposition}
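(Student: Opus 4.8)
The plan is to first dispose of the B-properness, which is essentially immediate, and then identify the associated coarse structure $\mathcal{E}(L,\mathcal{B}_X)$ with $\mathcal{E}_{\mathcal{B}_X}$ by a two-sided inclusion. For B-properness: since $L$ carries the maximal bornology $\mathcal{P}(L)$, every subset of $L$ is bounded, so in particular $L_{B,B'} \in \mathcal{P}(L) = \mathcal{B}_L$ for any pair $(B,B')$ of elements of $\mathcal{B}_X$; hence $\rho$ is B-proper. (One should check that $\rho$ being bornological is consistent with this choice; this is automatic once $L$ has the maximal bornology, as remarked in Example~\ref{ex:leftbounded}-type reasoning.) By Theorem \ref{theorem:proweakeqq}, the family $\mathcal{E}^0(L,\mathcal{B}_X)$ is then a base of a coarse structure and $\mathcal{E}(L,\mathcal{B}_X) = \langle \mathcal{E}^0(L,\mathcal{B}_X)\rangle$ is well-defined and coarsely connected, with $\mathcal{B}_{\mathcal{E}(L,\mathcal{B}_X)} = \mathcal{B}_X$.

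Next I would prove the inclusion $\mathcal{E}(L,\mathcal{B}_X) \subset \mathcal{E}_{\mathcal{B}_X}$. By Proposition \ref{prop:coarseassociatedb}, $\mathcal{E}_{\mathcal{B}_X}$ is the minimum among coarsely connected coarse structures $\mathcal{E}$ with $\mathcal{B}_\mathcal{E} = \mathcal{B}_X$; and by the last assertion of Theorem \ref{theorem:proweakeqq}, $\mathcal{E}(L,\mathcal{B}_X)$ is the minimum among coarsely connected coarse structures $\mathcal{E}$ with $\mathcal{B}_\mathcal{E} = \mathcal{B}_X$ \emph{such that $\rho$ is equi controlled on $(X,\mathcal{E})$}. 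So it suffices to show that $\rho$ is equi controlled on $(X, \mathcal{E}_{\mathcal{B}_X})$: then $\mathcal{E}_{\mathcal{B}_X}$ is one of the competitors in the minimality characterization of $\mathcal{E}(L,\mathcal{B}_X)$, forcing $\mathcal{E}(L,\mathcal{B}_X) \subset \mathcal{E}_{\mathcal{B}_X}$. To see $\rho$ is equi controlled: a base of $\mathcal{E}_{\mathcal{B}_X}$ is given by the sets $(B\times B)\cup \diag(X)$ for $B \in \mathcal{B}_X$, and for such a set $\bigl((B\times B)\cup\diag(X)\bigr)_l = (lB \times lB)\cup \diag(X)$, so $\bigcup_{l\in L}\bigl((B\times B)\cup\diag(X)\bigr)_l = (L\cdot B \times L\cdot B)\cup \diag(X) = E(L,B)$. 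Since $\rho$ is bornological and $L = \mathcal{P}(L)$, the orbit $L\cdot B = \rho(L\times B)$ is a member of $\mathcal{B}_X$, hence $E(L,B) \in \mathcal{E}_{\mathcal{B}_X}$; thus the $L$-translates of each basic controlled set have controlled union, i.e.\ $\rho$ is equi controlled.

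For the reverse inclusion $\mathcal{E}_{\mathcal{B}_X} \subset \mathcal{E}(L,\mathcal{B}_X)$, I would argue directly from the bases. A basic element of $\mathcal{E}_{\mathcal{B}_X}$ is $(B\times B)\cup \diag(X)$ with $B \in \mathcal{B}_X$, and by definition $E(L,B) = (B\times B)_L \cup \diag(X) \supset (B\times B)\cup \diag(X)$ (taking $l = e$). Since $E(L,B) \in \mathcal{E}^0(L,\mathcal{B}_X)$, every subset of $E(L,B)$ — in particular $(B\times B)\cup\diag(X)$ — lies in $\mathcal{E}(L,\mathcal{B}_X)$. As $\mathcal{E}_{\mathcal{B}_X} = \langle (\mathcal{B}_X\times\mathcal{B}_X)\cup\{\diag(X)\}\rangle$ is generated by such sets and $\mathcal{E}(L,\mathcal{B}_X)$ is a coarse structure, it follows that $\mathcal{E}_{\mathcal{B}_X} \subset \mathcal{E}(L,\mathcal{B}_X)$. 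Combining the two inclusions gives $\mathcal{E}(L,\mathcal{B}_X) = \mathcal{E}_{\mathcal{B}_X}$.

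The only genuinely load-bearing point — and the one I'd expect to be the crux — is the identity $\bigcup_{l\in L}\bigl((B\times B)\cup\diag(X)\bigr)_l = E(L,B)$ together with the observation that $L\cdot B \in \mathcal{B}_X$; once the orbit of a bounded set is bounded (which is exactly where the maximality of $\mathcal{B}_L$, via $\rho$ being bornological, is used), equi-controlledness on $\mathcal{E}_{\mathcal{B}_X}$ drops out and the two minimality statements close the argument. Everything else is bookkeeping with the definitions of $E(L,B)$, $\mathcal{E}_{\mathcal{B}_X}$, and $\langle\,\cdot\,\rangle$.
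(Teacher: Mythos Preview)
Your argument is correct in outline and supplies the details the paper omits (the paper merely says the proof is straightforward). There is one slip to fix: the displayed chain
\[
\bigcup_{l\in L}\bigl((B\times B)\cup\diag(X)\bigr)_l \;=\; (L\cdot B \times L\cdot B)\cup \diag(X) \;=\; E(L,B)
\]
is not a chain of equalities. The outer identity $\bigcup_{l\in L}\bigl((B\times B)\cup\diag(X)\bigr)_l = E(L,B)$ is just the definition of $E(L,B)$, but $E(L,B)$ is in general \emph{strictly} contained in $(L\cdot B)\times(L\cdot B)\cup\diag(X)$: the latter records pairs $(l_1 b_1,l_2 b_2)$ with possibly $l_1\neq l_2$, while $E(L,B)$ only records pairs $(lb_1,lb_2)$ with a common $l$. (A concrete counterexample: $L=\Z/2\Z$ acting on $\{1,2,3,4\}$ by $1\leftrightarrow 3$, $2\leftrightarrow 4$, with $B=\{1,2\}$.)

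This does not damage your proof: the inclusion $E(L,B)\subset (L\cdot B\times L\cdot B)\cup\diag(X)$ is all you need, and since $L\cdot B=\rho(L\times B)\in\mathcal{B}_X$ (here is where the maximal bornology on $L$ enters), the right-hand side lies in $\mathcal{E}_{\mathcal{B}_X}$ and hence so does $E(L,B)$. With that correction your equi-controlledness argument goes through, and both minimality statements then give $\mathcal{E}(L,\mathcal{B}_X)=\mathcal{E}_{\mathcal{B}_X}$ as you claim. A slightly leaner route, avoiding the detour through equi-controlledness, is to use the same inclusion directly: it already shows every generator $E(L,B)$ of $\mathcal{E}(L,\mathcal{B}_X)$ lies in $\mathcal{E}_{\mathcal{B}_X}$, giving $\mathcal{E}(L,\mathcal{B}_X)\subset\mathcal{E}_{\mathcal{B}_X}$; the reverse inclusion follows from the minimality of $\mathcal{E}_{\mathcal{B}_X}$ in Proposition~\ref{prop:coarseassociatedb} once you know $\mathcal{E}(L,\mathcal{B}_X)$ is coarsely connected with $\mathcal{B}_{\mathcal{E}(L,\mathcal{B}_X)}=\mathcal{B}_X$.
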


The proof is straightforward.
As a corollary, the following holds.

\begin{corollary}
    Let $L:=\mathrm{O}(n)$ and  $X:=\mathbb{R}^n$.
    We write $\rho$ for the natural $L$-action on $X$ and $d$ for the standard metric on $\mathbb{R}^n$.
    Then the action $\rho$ is B-proper and the equation $\mathcal{E}(L, \mathcal{B}_d(X)) =\mathcal{E}_{\mathcal{B}_d(X)}$ holds.
     It is worth emphasizing that the coarse structure $\mathcal{E}(L, \mathcal{B}_d(X))$ is strictly included in the bounded coarse structure $\mathcal{E}_d$.
\end{corollary}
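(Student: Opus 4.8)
The plan is to deduce this corollary directly from Proposition~\ref{prop:poweraction} together with Proposition~\ref{prop:coarseassociatedb}, after observing that the natural $\mathrm{O}(n)$-action on $\mathbb{R}^n$ is a bornological action when $\mathrm{O}(n)$ is given the maximal bornology $\mathcal{P}(\mathrm{O}(n))$. First I would note that for \emph{any} bornological space $(X,\mathcal{B}_X)$ and \emph{any} $L$-action $\rho$, if $L$ carries the maximal bornology then the bornology on $L\times X$ is $\langle \mathcal{P}(L)\times\mathcal{B}_X\rangle$, so a set is bounded in $L\times X$ iff it is contained in some $L\times B$ with $B\in\mathcal{B}_X$; hence $\rho$ is bornological iff $L\cdot B=\rho(L\times B)\in\mathcal{B}_X$ for every $B\in\mathcal{B}_X$. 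For $L=\mathrm{O}(n)$, $X=\mathbb{R}^n$, and $\mathcal{B}_X=\mathcal{B}_d(X)$ the metric bornology, orthogonal maps are isometries, so $\mathrm{O}(n)\cdot B$ has the same diameter bound as $B$ (more precisely $\mathrm{O}(n)\cdot B\subset \overline{B_{R}(0)}$ whenever $B\subset \overline{B_R(0)}$, since each element fixes the origin and preserves norms); thus $\mathrm{O}(n)\cdot B\in\mathcal{B}_d(X)$ and the action is bornological.

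Next I would invoke Proposition~\ref{prop:poweraction}: since $\mathrm{O}(n)$ carries the maximal bornology and the action is bornological, $\rho$ is automatically B-proper, and moreover $\mathcal{E}(\mathrm{O}(n),\mathcal{B}_d(X))=\mathcal{E}_{\mathcal{B}_d(X)}$, the coarsely connected coarse structure associated to the metric bornology $\mathcal{B}_d(X)$ in the sense of Proposition~\ref{prop:coarseassociatedb}. This already gives both assertions of the first sentence of the corollary. It remains to justify the ``strictly included'' claim, i.e.\ $\mathcal{E}_{\mathcal{B}_d(\mathbb{R}^n)}\subsetneq \mathcal{E}_d$.

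For the inclusion $\mathcal{E}_{\mathcal{B}_d}\subset\mathcal{E}_d$ I would argue that $\mathcal{E}_d$ is a coarsely connected coarse structure on $\mathbb{R}^n$ whose induced bornology $\mathcal{B}_{\mathcal{E}_d}$ equals $\mathcal{B}_d$ (a set is coarsely bounded for $\mathcal{E}_d$ iff it lies in some $E_r[A]$ with $A$ finite, which for a connected metric structure means bounded diameter, i.e.\ metrically bounded); then minimality of $\mathcal{E}_{\mathcal{B}_d}$ among coarsely connected coarse structures with induced bornology $\mathcal{B}_d$ (Proposition~\ref{prop:coarseassociatedb}) forces $\mathcal{E}_{\mathcal{B}_d}\subset\mathcal{E}_d$. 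For strictness, I would exhibit a controlled set of $\mathcal{E}_d$ that is not controlled for $\mathcal{E}_{\mathcal{B}_d}$: take $E=E_1=\{(x,y)\mid |x-y|\le 1\}$. If $E_1\in\mathcal{E}_{\mathcal{B}_d}=\langle(\mathcal{B}_d\times\mathcal{B}_d)\cup\{\diag\}\rangle$, then $E_1$ would be contained in a finite union of sets of the form $B\times B'$ with $B,B'\in\mathcal{B}_d$ together with $\diag(\mathbb{R}^n)$; but $E_1$ contains pairs $(x,x+v)$ with $|v|=1$ and $x$ arbitrarily large in norm, hence its first-coordinate projection off the diagonal is unbounded, contradicting containment in finitely many products of metrically bounded sets. (For $n=1$ this needs the mild extra remark that the diagonal can absorb only the pairs with $x=y$; pairs with $0<|x-y|\le 1$ and $|x|$ large still cannot be covered.) Hence $E_1\in\mathcal{E}_d\setminus\mathcal{E}_{\mathcal{B}_d}$ and the inclusion is strict. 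The only mildly delicate point is this last separation argument — making precise that a bounded-width ``band'' along the diagonal is not coarsely bounded in the product sense — but it is elementary; everything else is a direct citation of Propositions~\ref{prop:poweraction} and~\ref{prop:coarseassociatedb}.
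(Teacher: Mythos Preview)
Your proposal is correct and follows the same route the paper intends: the corollary is stated immediately after Proposition~\ref{prop:poweraction} as a direct consequence, with no separate proof given. You supply the details the paper omits, namely the verification that the $\mathrm{O}(n)$-action is bornological for the maximal bornology on $\mathrm{O}(n)$, and an explicit argument for the strict inclusion $\mathcal{E}_{\mathcal{B}_d(\mathbb{R}^n)}\subsetneq\mathcal{E}_d$, which the paper merely asserts. One minor tightening: elements of $\mathcal{E}_{\mathcal{B}_d}$ are contained in a single set of the form $(B\times B')\cup\diag(X)$ rather than a finite union (finite unions of bounded products sit inside one bounded product), so your separation argument for $E_1$ can be phrased against a single such set; the substance is unchanged.
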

   
Next, let us study coarsely transitive (see Definition \ref{def:coarsetransitive}) B-proper actions as below.

\begin{theorem}\label{theorem:coarseandinducecoarse}
        Let $(L,\mathcal{B}_L)$ be a bornological group and $(X,\mathcal{E})$ a coarsely connected coarse space and $\rho$ a bornological  B-proper $L$-action on $(X,\mathcal{B}_\mathcal{E})$ ($\mathcal{B}_\mathcal{E}$ is defined in Proposition \ref{prop:coarseinduced}).
        Then the followings hold.
        \begin{enumerate}
        \item\label{theorem:coarseandinducecoarse:item:general} The inclusion $\mathcal{E}(L, \mathcal{B}_\mathcal{E}) \subset \mathcal{E}$ holds.
            \item\label{theorem:coarseandinducecoarse:item:transitive} Suppose that the action $\rho$ is coarsely transitive and equi controlled on $(X,\mathcal{E})$. Then the converse inclusion holds.
        \end{enumerate}
\end{theorem}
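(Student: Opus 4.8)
The plan is to treat the two parts separately, with part \eqref{theorem:coarseandinducecoarse:item:general} being essentially immediate from earlier machinery and part \eqref{theorem:coarseandinducecoarse:item:transitive} requiring the real work. For part \eqref{theorem:coarseandinducecoarse:item:general}, I would argue exactly as in the last paragraph of the proof of Theorem \ref{theorem:proweakeqq}: since $\rho$ is B-proper on $(X,\mathcal{B}_\mathcal{E})$, the coarse structure $\mathcal{E}(L,\mathcal{B}_\mathcal{E}) = \langle \mathcal{E}^0(L,\mathcal{B}_\mathcal{E})\rangle$ is defined, and it is the \emph{minimum} coarsely connected coarse structure whose induced bornology is $\mathcal{B}_\mathcal{E}$ and for which $\rho$ is equi controlled. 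But $\mathcal{E}$ itself is coarsely connected by hypothesis, satisfies $\mathcal{B}_\mathcal{E} = \mathcal{B}_\mathcal{E}$ trivially, and one still needs $\rho$ equi controlled on $(X,\mathcal{E})$ — which in part \eqref{theorem:coarseandinducecoarse:item:general} is \emph{not} assumed. So instead I would give the direct argument: fix $E \in \mathcal{E}(L,\mathcal{B}_\mathcal{E})$, reduce to $E = E(L,B)$ for some $B \in \mathcal{B}_\mathcal{E}$, and use Proposition \ref{prop:connected}\,(ii) (coarse connectedness gives $B\times B \in \mathcal{E}$) together with the fact that, once $B\times B\in\mathcal E$, the equi-controlled-style bound is not needed — rather one observes $E(L,B) = (B\times B)_L \cup \diag(X)$ and must show each $(B\times B)_l = (lB\times lB)\in\mathcal E$. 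For this note $lB \in \mathcal{B}_\mathcal{E}$ because $lB \subset \rho(\{l\}\times B)$ and $\{l\}\times B$ is bounded in $L\times X$ (singletons are bounded, $\rho$ bornological), and then $lB\times lB \in \mathcal{E}$ again by Proposition \ref{prop:connected}\,(ii). The union over $l$ of these controlled sets need not be controlled in general — so this shows only each $(B\times B)_l\in\mathcal E$, not $E(L,B)\in\mathcal E$. Hence in fact part \eqref{theorem:coarseandinducecoarse:item:general} as stated does require equi-controlledness implicitly, or a different reading; I would clarify with the authors, but the cleanest route is: $\mathcal{E}(L,\mathcal{B}_\mathcal{E})\subset\mathcal E$ holds because $\mathcal E$ contains $\diag(X)$ and all $B\times B$ with $B\in\mathcal B_{\mathcal E}$, and $E(L,B)=(L_*(B\times B))\cup\diag(X)$ where $L_*(B\times B)=E(L,B)$ by Lemma \ref{lem:elb}\eqref{lem:elb:inv}; so one genuinely needs to know $E(L,B)\in\mathcal E$, which is the equi-controlled hypothesis. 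I would therefore prove \eqref{theorem:coarseandinducecoarse:item:general} under the understanding that $\mathcal E$ is among the coarse structures of Theorem \ref{theorem:proweakeqq}, invoking minimality directly.

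For part \eqref{theorem:coarseandinducecoarse:item:transitive}, I now assume $\rho$ is equi controlled and coarsely transitive on $(X,\mathcal{E})$, and I must show $\mathcal{E} \subset \mathcal{E}(L,\mathcal{B}_\mathcal{E})$. Fix $E \in \mathcal{E}$. By coarse transitivity, Definition \ref{def:coarsetransitive}\eqref{def:transitive:bounded} gives $B_0 \in \mathcal{B}_\mathcal{E}$ with $L\cdot B_0 = X$. The strategy is to bound $E$ by the composition of a few controlled sets of the form $B_0\times B_0$ translated around by $L$. Concretely: given $(x,y) \in E$, write $x = l_1 b_1$, $y = l_2 b_2$ with $b_i \in B_0$, $l_i \in L$; then $(x,y) = (l_1 b_1, l_1 \cdot l_1^{-1}l_2 b_2)$. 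The point is that $l_1^{-1}l_2 b_2$ lies within a \emph{bounded} distance of $b_1$: since $(x,y)\in E$ and (after translating by $l_1^{-1}$, using equi-controlledness to pass $E$ into a fixed $F\in\mathcal E$) we get $(b_1, l_1^{-1}l_2 b_2) \in F$, so $l_1^{-1}l_2 b_2 \in F[B_0] =: B_1 \in \mathcal{B}_\mathcal{E}$ by Proposition \ref{prop:coarsebornologyproperty}. Setting $B := B_0 \cup B_1 \in \mathcal{B}_\mathcal{E}$, we then have $b_1, l_1^{-1}l_2 b_2 \in B$, whence $(b_1, l_1^{-1}l_2b_2) \in B\times B$ and therefore $(x,y) = (l_1 b_1, l_1(l_1^{-1}l_2 b_2)) \in (B\times B)_{l_1} \subset E(L,B) \in \mathcal{E}^0(L,\mathcal{B}_\mathcal{E})$. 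Since $(x,y)\in E$ was arbitrary this gives $E \subset E(L,B)$, hence $E \in \mathcal{E}(L,\mathcal{B}_\mathcal{E})$, as desired.

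The main obstacle — and the step deserving care — is the claim that a \emph{single} $F \in \mathcal{E}$ absorbs all the translates $E_{l_1^{-1}}$ and that the resulting $B = B_0 \cup F[B_0]$ works \emph{uniformly} over all $(x,y)\in E$; this is precisely where equi-controlledness of $\rho$ is used, and it is what makes the choice of $B$ independent of the point. I would also double-check the bornological bookkeeping: that $F[B_0] \in \mathcal{B}_\mathcal{E}$ (Proposition \ref{prop:coarsebornologyproperty}), that $B_0 \cup F[B_0] \in \mathcal{B}_\mathcal{E}$ (bornology axiom (ii)), and that the decomposition $x = l_1 b_1$ is legitimate — which it is, since $L\cdot B_0 = X$. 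Combining part \eqref{theorem:coarseandinducecoarse:item:general} and part \eqref{theorem:coarseandinducecoarse:item:transitive} then yields $\mathcal{E} = \mathcal{E}(L,\mathcal{B}_\mathcal{E})$ under the hypotheses of \eqref{theorem:coarseandinducecoarse:item:transitive}, which is the content we want.
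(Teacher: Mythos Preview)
Your argument is correct and matches the paper's approach. For part \eqref{theorem:coarseandinducecoarse:item:transitive} the paper runs essentially the same translation-to-a-bounded-set argument, only phrased via the base-point form of coarse transitivity (Definition \ref{def:coarsetransitive}\eqref{def:transitive:forall}): it fixes $x\in X$ and $F\in\mathcal E$ with $F[L\cdot x]=X$, passes to $E_L,F_L\in\mathcal E$ by equi controlledness, and shows $E\subset E\bigl(L,(E_L\circ F_L)[x]\bigr)$ --- your bounded set $B_0\cup F[B_0]$ plays the same role as the paper's $(E_L\circ F_L)[x]$. Your worry about part \eqref{theorem:coarseandinducecoarse:item:general} is justified: the paper's entire proof of \eqref{theorem:coarseandinducecoarse:item:general} is the single sentence ``immediately holds from the equi controlledness of the action $\rho$,'' so it too invokes that hypothesis even though the theorem statement attaches it only to \eqref{theorem:coarseandinducecoarse:item:transitive}; your resolution via the minimality clause of Theorem \ref{theorem:proweakeqq} is exactly what is intended.
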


\begin{proof}[Proof of Theorem \ref{theorem:coarseandinducecoarse}]
    The statement \eqref{theorem:coarseandinducecoarse:item:general} immediately holds from the equi controlledness of the action $\rho$.
    Hence we only show \eqref{theorem:coarseandinducecoarse:item:transitive}.
    Fix $E\in \mathcal{E}$. 
    Without loss of generality, we may assume that the controlled set $E$ include the diagonal set $\diag(X)$. 
    There exists a point $x\in X$ and $F\in \mathcal{E}$ such that $F[L\cdot x]=X$ from the coarse transitivity of the action $\rho$.
    Since the action $\rho$ is equi controlled,  the sets $E_L:=\{(ls,lt)\mid l\in L, (s,t)\in E\}$ and $F_L:=\{(lp,lq)\mid l\in L, (p,q)\in F\}$ belong to $\mathcal{E}$.
    Our goal is to show that the controlled set $E$ is a subset of $E(L,(E_L\circ F_L)[x])$.
    Fix a pair $(y,z)\in E$.
    Then there exists an element $l\in L$ such that $z$ belongs to $F[lx]$ from the coarse transitivity of $\rho$.
    Hence the pair $(l^{-1}z,x)$ is an element of $F_L$.
    Thus, the element $(l^{-1}y,x)\in X\times X$ belongs to $E_L\circ F_L$.
    As a result, both of the points $l^{-1}y$ and $l^{-1}z$ are elements of $(E_L\circ F_L)[x]$.
    Hence the pair $(y,z)=(ll^{-1}y,ll^{-1}z)$ belongs to  $E(L,(E_L\circ F_L)[x])$.
    Thus, the set $\mathcal{E}(L,\mathcal{B}_\mathcal{E})$ is a subset of $\mathcal{E}$.
    Therefore, the equation $\mathcal{E}(L,\mathcal{B}_\mathcal{E}) =\mathcal{E}$ holds.
\end{proof}

\begin{corollary}\label{cor:leftcoarse}
    Let $(L,\mathcal{B}_L)$ be a bornological group (see Definition \ref{def:bgroup}) and $\rho$  the left $L$-action on itself.
    Then the  equation
    $\mathcal{E}(L, \mathcal{B}_L)= \mathcal{E}^{R}_{\mathcal{B}_L}$
    holds (the coarse structure $\mathcal{E}^R_{\mathcal{B}_L}$ is given in Proposition \ref{prop:leftinversever}).
\end{corollary}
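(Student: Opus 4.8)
The plan is to identify the left $L$-action on itself as a coarsely transitive B-proper action and then apply Theorem \ref{theorem:coarseandinducecoarse}\eqref{theorem:coarseandinducecoarse:item:transitive} with $(X,\mathcal{E}) = (L,\mathcal{E}^R_{\mathcal{B}_L})$. First I would record the hypotheses needed to invoke that theorem. By Proposition \ref{prop:leftinversever}, $\mathcal{E}^R_{\mathcal{B}_L}$ is a coarse structure on $L$; one checks it is coarsely connected, since for $l,h\in L$ the set $D=\{l^{-1}h,e\}$ is bounded (being finite, hence a member of $\mathcal{B}_L$ as $\mathcal{B}_L$ covers $L$ and is closed under finite unions and subsets), so $\{(l,h)\}\subset E^D \in \mathcal{E}^R_{\mathcal{B}_L}$. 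Next I would verify the identity $\mathcal{B}_{\mathcal{E}^R_{\mathcal{B}_L}} = \mathcal{B}_L$: a finite union $\bigcup_i E^{D_i}[l_i]$ with $D_i\in\mathcal{B}_L$ equals $\bigcup_i l_i D_i$, which lies in $\mathcal{B}_L$ because left translation by $l_i$ is a bornological map (Example \ref{ex:leftbounded}) and $\mathcal{B}_L$ is closed under finite unions; conversely any $D\in\mathcal{B}_L$ equals $E^D[e]$.

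Then I would check that the left action $\rho$ of $(L,\mathcal{B}_L)$ on $(L,\mathcal{B}_{\mathcal{E}^R_{\mathcal{B}_L}}) = (L,\mathcal{B}_L)$ is bornological (Example \ref{ex:leftbounded}), B-proper (the stated example right after Definition \ref{def:bproper}, or directly: $L_{B,B'}=\{l\mid lB\cap B'\neq\emptyset\}\subset B'B^{-1}$, which is bounded since multiplication and inversion are bornological maps), coarsely transitive (it is transitive, and every transitive action on a coarse space is coarsely transitive), and equi controlled on $(L,\mathcal{E}^R_{\mathcal{B}_L})$ — for this last point, given $E^D$ with $D\in\mathcal{B}_L$ and any $g\in L$, one has $(E^D)_g = \{(gl,gh)\mid l^{-1}h\in D\} = \{(l',h')\mid (l')^{-1}h'\in D\} = E^D$, so in fact $(E^D)_g = E^D$ for all $g$, giving equi controlledness with $F=E$. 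With all four hypotheses in hand, Theorem \ref{theorem:coarseandinducecoarse} applied to $(X,\mathcal{E}) = (L,\mathcal{E}^R_{\mathcal{B}_L})$ yields $\mathcal{E}(L,\mathcal{B}_{\mathcal{E}^R_{\mathcal{B}_L}}) = \mathcal{E}^R_{\mathcal{B}_L}$, and since $\mathcal{B}_{\mathcal{E}^R_{\mathcal{B}_L}} = \mathcal{B}_L$ this reads $\mathcal{E}(L,\mathcal{B}_L) = \mathcal{E}^R_{\mathcal{B}_L}$, as claimed.

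The computations are all routine translations between the definitions, so there is no serious obstacle; the one place that deserves a little care is the identity $\mathcal{B}_{\mathcal{E}^R_{\mathcal{B}_L}} = \mathcal{B}_L$, where one must make sure that the "finite subset $A$" in the definition of $\mathcal{B}_\mathcal{E}$ (Proposition \ref{prop:coarseinduced}) combined with a controlled set $E^D$ does not produce anything outside $\mathcal{B}_L$ — this works precisely because left translates of bounded sets are bounded and $\mathcal{B}_L$ absorbs finite unions. Alternatively, one can bypass Theorem \ref{theorem:coarseandinducecoarse} and argue directly from Theorem \ref{theorem:proweakeqq}: the left action is B-proper, so $\mathcal{E}(L,\mathcal{B}_L)$ is the minimum coarsely connected coarse structure on $L$ making $\rho$ equi controlled with induced bornology $\mathcal{B}_L$; since $\mathcal{E}^R_{\mathcal{B}_L}$ has these properties one gets $\mathcal{E}(L,\mathcal{B}_L)\subset\mathcal{E}^R_{\mathcal{B}_L}$, and for the reverse inclusion it suffices to note $E^D = E(L,D)$ for each $D\in\mathcal{B}_L$ — indeed $E(L,D) = \{(ld_1,ld_2)\mid l\in L,\ d_1,d_2\in D\}\cup\diag(L)$, and $(ld_1,ld_2)$ satisfies $(ld_1)^{-1}(ld_2) = d_1^{-1}d_2\in D^{-1}D$, while conversely $(l,h)$ with $l^{-1}h\in D$ can be written as $(l\cdot e, l\cdot(l^{-1}h))$ with $e, l^{-1}h\in D\cup\{e\}$ — so after enlarging $D$ to $D\cup\{e\}$ one sees the bases $\mathcal{E}^0$ and $\mathcal{E}^0(L,\mathcal{B}_L)$ generate the same coarse structure.
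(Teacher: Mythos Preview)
Your proposal is correct and follows precisely the route the paper intends: the corollary is placed immediately after Theorem \ref{theorem:coarseandinducecoarse} with no separate proof, and you supply the routine verification of its hypotheses for $(X,\mathcal{E})=(L,\mathcal{E}^R_{\mathcal{B}_L})$---coarse connectedness, the identity $\mathcal{B}_{\mathcal{E}^R_{\mathcal{B}_L}}=\mathcal{B}_L$, B-properness, transitivity, and the left-invariance $(E^D)_g=E^D$ giving equi controlledness. Your alternative direct argument comparing the bases $E^D$ and $E(L,D)$ is also valid and gives an independent elementary proof, though it is not needed for the paper's line of reasoning.
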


\begin{remark}
Corollary \ref{cor:leftcoarse} can be found in \cite{NicasRosenthal2012}.
\end{remark}

Moreover, in the setting of proper isometric transitive actions on a metric space,  Theorem \ref{theorem:coarseandinducecoarse} can be stated as follows.

\begin{corollary}\label{cor:transitiveisom}
    Let $(L,\mathcal{B}_L)$ be a bornological group, $(X,d)$ a metric space.
    Consider that the metric bornology $\mathcal{B}_d$ (see Example \ref{example:bornology}) on $X$.
    We fix $\rho$ an isometric B-proper transitive $L$-action on $X$.
    Then the equation $ \mathcal{E}(L, \mathcal{B}_d)=\mathcal{E}_d$ holds (the definition of the bounded coarse structure $\mathcal{E}_d$ can be seen in Example \ref{example:coarse}).
\end{corollary}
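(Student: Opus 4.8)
The plan is to deduce Corollary~\ref{cor:transitiveisom} as a specialization of Theorem~\ref{theorem:coarseandinducecoarse} to the metric setting, taking $\mathcal{E}=\mathcal{E}_d$, the bounded coarse structure on $(X,d)$ (Example~\ref{example:coarse}). First I would verify the hypotheses of that theorem. The coarse space $(X,\mathcal{E}_d)$ is coarsely connected: for any $(x,y)\in X\times X$ the singleton $\{(x,y)\}$ lies in $E_r$ with $r=d(x,y)$, hence is a controlled set. The action $\rho$ is bornological and B-proper on $(X,\mathcal{B}_d)$ by hypothesis (in this paper a B-proper action is bornological by definition). The remaining structural ingredient is the identification $\mathcal{B}_{\mathcal{E}_d}=\mathcal{B}_d$, so that $\rho$ is in fact a bornological B-proper action on $(X,\mathcal{B}_{\mathcal{E}_d})$, as Theorem~\ref{theorem:coarseandinducecoarse} requires.

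For that identification I would argue both inclusions directly. If $B\subset E[A]$ for a finite set $A\subset X$ and $E\in\mathcal{E}_d$, choose $r\ge 0$ with $E\subset E_r$; then $B$ is contained in the finite union $\bigcup_{a\in A}E_r[a]$ of closed $r$-balls, which is metrically bounded, so $B\in\mathcal{B}_d$. Conversely, any $B\in\mathcal{B}_d$ is contained in a single ball $E_r[x_0]$ with $\{x_0\}$ finite and $E_r\in\mathcal{E}_d$, hence $B\in\mathcal{B}_{\mathcal{E}_d}$. It then remains to check that $\rho$ is equi controlled and coarsely transitive on $(X,\mathcal{E}_d)$. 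Equi controlledness is exactly Example~\ref{ex:leftequi}, since $\rho$ is isometric. For coarse transitivity, transitivity of $\rho$ gives $L\cdot\{x_0\}=X$ for any point $x_0$, and singletons are coarsely bounded, so condition~(iii) of Definition~\ref{def:coarsetransitive} holds. With all hypotheses in place, Theorem~\ref{theorem:coarseandinducecoarse}(i) yields $\mathcal{E}(L,\mathcal{B}_d)=\mathcal{E}(L,\mathcal{B}_{\mathcal{E}_d})\subset\mathcal{E}_d$ and part~(ii) yields the reverse inclusion, whence $\mathcal{E}(L,\mathcal{B}_d)=\mathcal{E}_d$.

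I do not expect a genuine obstacle: the statement is essentially Theorem~\ref{theorem:coarseandinducecoarse} read in the language of metric spaces. The only point that needs a short argument is the equality $\mathcal{B}_{\mathcal{E}_d}=\mathcal{B}_d$ — that the coarsely bounded sets of the bounded coarse structure are precisely the metrically bounded ones — which is routine once one notes that a finite union of balls of a fixed radius has finite diameter and that every metrically bounded set already sits inside one such ball.
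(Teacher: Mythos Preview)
Your proposal is correct and follows exactly the route the paper intends: the paper presents Corollary~\ref{cor:transitiveisom} as an immediate specialization of Theorem~\ref{theorem:coarseandinducecoarse} to $\mathcal{E}=\mathcal{E}_d$, without spelling out the verification of hypotheses. You have simply filled in those routine checks (coarse connectedness, $\mathcal{B}_{\mathcal{E}_d}=\mathcal{B}_d$, equi controlledness via Example~\ref{ex:leftequi}, and coarse transitivity from transitivity), which is more detail than the paper itself provides.
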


Here are two examples of Corollaries \ref{cor:leftcoarse} and \ref{cor:transitiveisom}.

\begin{example}
    Let $(V,\|\cdot \|)$ be a normed vector space.
    We put $d$ for the metric induced by norm $\|\cdot \|$ and $\rho$ for the natural shift $V$-action on itself.
    Consider the bounded coarse structure $\mathcal{E}_d$ on $V$ (see Example \ref{example:coarse}) and the metric bornology $\mathcal{B}_d$ on $V$ (see Example \ref{example:bornology}).
    Then the equation $\mathcal{E}({V, \mathcal{B}_d})=\mathcal{E}^R_{\mathcal{B}_d}=\mathcal{E}_d$ follows from  Corollaries \ref{cor:leftcoarse} and \ref{cor:transitiveisom}. 
\end{example}

\begin{example}
    Take $n\in \mathbb{Z}_{>0}$ and represent  the indefinite orthogonal group $\mathrm{SO}(2,2n)$ as $G$.
    Denote a closed subgroup $\mathrm{SO}(1,2n)$ of $G$ by $H$ and $\mathrm{U}(1,n)$ by $L$.
    We write $X$ for the homogeneous space $G/H$ and $\rho$ for the natural $L$-action on $X$.
    It is well-known that the $L$-action $\rho$ on $X:=G/H$ is proper in the sense of topology (cf.~\cite{Kobayashi89}).
    Consider the compact bornology $\mathcal{B}_\mathrm{cpt}(X)$ on $X$ and $\mathcal{B}_\mathrm{cpt}(L)$ on $L$ (see Example \ref{example:bornology}).
    Then the $L$-action $\rho$ on $X$ is B-proper from Theorem \ref{theorem:topbor}. 
    \begin{proposition}\label{prop:indefiniteunital}
    There exists a Riemannian metric $d$ on $X$ such that the metric bornology $\mathcal{B}_d$ coincides with $\mathcal{B}_\mathrm{cpt}(X)$ and 
    the equation $\mathcal{E}(L,\mathcal{B}_\mathrm{cpt}(X)) = \mathcal{E}_d$ holds.
    \end{proposition}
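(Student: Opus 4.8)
The plan is to manufacture the metric with Palais's theorem and then upgrade it to a Heine--Borel metric by exploiting the cocompactness of Kobayashi's action; the coarse-geometric conclusion is then read off from Theorem~\ref{theorem:coarseandinducecoarse}.

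First I would collect the relevant structure of $X=G/H$. Being a homogeneous space of a Lie group it is a connected $C^\infty$-manifold --- concretely one may identify it with the pseudo-hyperbolic hypersurface $\{v\in\R^{2,2n}\mid Q(v)=1\}$, where $Q$ is the standard quadratic form of signature $(2,2n)$ arising as the real part of the Hermitian form of signature $(1,n)$ on $\C^{1,n}=\C^{n+1}$, so that $L=\mathrm{U}(1,n)$ acts $\R$-linearly on $\R^{2,2n}$ --- and the $L$-action $\rho$ is smooth, proper, and, crucially, \emph{cocompact}: the quotient $L\backslash X$ is compact by Kobayashi~\cite{Kobayashi89}. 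Since $L$ is a Lie group, $X$ a $C^\infty$-manifold and $\rho$ a proper $C^\infty$-action, Theorem~\ref{theorem:palais} yields an $L$-invariant Riemannian metric on $X$; let $d$ denote its Riemannian distance. By construction $L$ acts on $(X,d)$ by isometries.

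The heart of the matter --- and the step I expect to be the main obstacle --- is to show that $(X,d)$ is Heine--Borel, i.e. that $\mathcal{B}_d(X)=\mathcal{B}_\mathrm{cpt}(X)$. Fix a compact $C$ with $L\cdot C=X$. About any point of $X$, small $d$-balls are relatively compact (in a chart the Riemannian distance is trapped between two multiples of a Euclidean one); covering $C$ by finitely many such balls produces a uniform radius $r_0>0$ with $\overline{B}_d(p,r_0)$ compact for all $p\in C$, and then, using that $d$ is $L$-invariant and $L\cdot C=X$, with $\overline{B}_d(p,r_0)$ compact for \emph{every} $p\in X$. Hence a $d$-Cauchy sequence eventually lies in one compact ball $\overline{B}_d(x_N,r_0/2)$ and so converges, $(X,d)$ is complete, and being a connected complete Riemannian manifold it is Heine--Borel by Hopf--Rinow. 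This is exactly the place where both the cocompactness of $\rho$ and the $L$-invariance of $d$ are used in an essential way: a generic $L$-invariant metric on $X$ need not be complete.

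Finally I would assemble the coarse-geometric statement. Set $\mathcal{B}_X:=\mathcal{B}_\mathrm{cpt}(X)$; by the previous step $\mathcal{B}_X=\mathcal{B}_d(X)=\mathcal{B}_{\mathcal{E}_d}$, with $\mathcal{E}_d$ the bounded coarse structure of $(X,d)$ (Example~\ref{example:coarse}). The action $\rho$ is bornological and B-proper, as already noted via Theorem~\ref{theorem:topbor}; it is equi controlled on $(X,\mathcal{E}_d)$ because it is isometric (Example~\ref{ex:leftequi}); the coarse space $(X,\mathcal{E}_d)$ is coarsely connected; and $\rho$ is coarsely transitive on $(X,\mathcal{E}_d)$ since $L\cdot C=X$ with $C\in\mathcal{B}_{\mathcal{E}_d}$ (condition~\eqref{def:transitive:bounded} of Definition~\ref{def:coarsetransitive}). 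Therefore Theorem~\ref{theorem:coarseandinducecoarse} applied with $\mathcal{E}=\mathcal{E}_d$ gives $\mathcal{E}(L,\mathcal{B}_{\mathcal{E}_d})\subset\mathcal{E}_d$ by part~\eqref{theorem:coarseandinducecoarse:item:general} and $\mathcal{E}_d\subset\mathcal{E}(L,\mathcal{B}_{\mathcal{E}_d})$ by part~\eqref{theorem:coarseandinducecoarse:item:transitive}, whence $\mathcal{E}(L,\mathcal{B}_\mathrm{cpt}(X))=\mathcal{E}(L,\mathcal{B}_{\mathcal{E}_d})=\mathcal{E}_d$, the desired equality.
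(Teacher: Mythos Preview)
Your argument is correct, but it takes a more circuitous route than the paper. The paper observes that the $L$-action on $X$ is not merely cocompact but actually \emph{transitive}, with compact isotropy $\mathrm{U}(n)$; hence $X\cong \mathrm{U}(1,n)/\mathrm{U}(n)$ is a Riemannian symmetric space of noncompact type, and one can simply take $d$ to be the canonical $L$-invariant Riemannian metric on this symmetric space, which is automatically complete (hence Heine--Borel). The conclusion $\mathcal{E}(L,\mathcal{B}_\mathrm{cpt}(X))=\mathcal{E}_d$ then follows from Theorem~\ref{theorem:coarseandinducecoarse} exactly as you do, using transitivity (hence coarse transitivity) and the isometric (hence equi controlled) nature of the action.

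Your approach --- invoke Palais to produce \emph{some} $L$-invariant metric, then use cocompactness to propagate local compactness of balls uniformly over $X$, deduce completeness, and apply Hopf--Rinow --- is more elaborate but has the virtue of generality: it would prove the analogous statement for any smooth proper cocompact action of a Lie group on a connected manifold, without needing to recognise the target as a familiar homogeneous space. The paper's route, by contrast, exploits the specific identification $X\cong \mathrm{U}(1,n)/\mathrm{U}(n)$ to get the Heine--Borel metric for free, avoiding your completeness argument entirely. Both land on Theorem~\ref{theorem:coarseandinducecoarse} for the coarse equality; you use condition~\eqref{def:transitive:bounded} of Definition~\ref{def:coarsetransitive} via the compact fundamental domain, whereas the paper uses literal transitivity.
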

    By Palais's Theorem~\ref{theorem:palais}, there exists an $L$-invariant Riemannian metric on $X$.
    In particular, in this setting, we can explicitly construct such a metric $d$ via the homogeneous space representation $X \cong \mathrm{U}(1,n)/\mathrm{U}(n)$.
    Moreover, the coarse structure $\mathcal{E}(L,\mathcal{B})$ associated with $\rho$ coincides with the bounded coarse structure $\mathcal{E}_d$ induced by $d$.
    \begin{proof}[Proof of Proposition \ref{prop:indefiniteunital}]
    One can see that the $L$-action $\rho$ on $X$ is transitive with the compact isotropy $\mathrm{U}(n)$. 
    Hence the topological homeomorphism $X \cong \mathrm{U}(1,n)/\mathrm{U}(n)$ follows.
    Further, the homogeneous space $\mathrm{U}(1,n)/ \mathrm{U}(n)$ has the $L$-invariant Riemann Haine-Borel metric $d$.
    Thus, we can assume that $X$ has $L$-invariant Riemann Haine-Borel metric $d$.
    Since the $L$-action $\rho$ is transitive and equi controlled on $(X,\mathcal{E}_d)$, Theorem \ref{theorem:coarseandinducecoarse} lead to the equation $\mathcal{E}(L,\mathcal{B}_\mathrm{cpt}(X)) = \mathcal{E}_d$.
    \end{proof}
\end{example}

\section*{Acknowledgments}
The author would like to give heartfelt thanks to Takayuki Okuda for his encouragement to write this paper. 
We are also indebted to 
Toshiyuki Kobayashi,
Hiroshi Tamaru, 
Taro Yoshino,
Takamitsu Yamauchi, 
Shinichi Oguni, 
Tomohiro Fukaya, 
Masato Mimura,  
Akira Kubo,  
Hiroki Nakajima, 
Daisuke Kazukawa, 
Kazuki Kannaka, 
Takumi Matsuka, 
Kento Ogawa, 
Akifumi Nakada, 
Shunsuke Miyauchi, 
and Hikozo Kobayashi
for many helpful comments.
This work is supported by JST SPRING, Grant Number JPMJSP2132.

\providecommand{\bysame}{\leavevmode\hbox to3em{\hrulefill}\thinspace}
\providecommand{\MR}{\relax\ifhmode\unskip\space\fi MR }
\providecommand{\MRhref}[2]{%
  \href{http://www.ams.org/mathscinet-getitem?mr=#1}{#2}
}
\providecommand{\href}[2]{#2}

\end{document}